\numberwithin{equation}{section} 
\newtheorem{thm}[equation]{Theorem}
\newtheorem{prop}[equation]{Proposition}
\newtheorem{lemma}[equation]{Lemma}
\newtheorem{cor}[equation]{Corollary}
\theoremstyle{definition}
\newtheorem{defi}[equation]{Definition}
\newtheorem{example}[equation]{Example}
\newtheorem{remark}[equation]{Remark}
\newtheorem{notation}[equation]{Notation}
\newtheorem{construction}[equation]{Construction}
\newtheorem{question}[equation]{Question}
\newtheorem{warning}[equation]{Warning}
\long\def\replace#1{#1}
\def\Mg{{\mathcal M}_g}
\def\M{{\mathcal M}}
\def\Mbar{\overline{{\mathcal M}}}
\def\MMbar{\overline{ M}}
\def\Hbar{\overline{H}}
\def\Zbar{\overline{Z}}
\def\Mgbar{\overline{{\mathcal M}}_g}
\def\sS{{\mathcal S}}
\def\C{{\mathbb C}}
\def\Q{{\mathbb Q}}
\def\A{{\mathbb A}}
\def\Z{{\mathbb Z}}
\def\Pro{{\mathbb P}}
\DeclareMathOperator{\Spec}{Spec}
\DeclareMathOperator{\Sets}{Sets}
\DeclareMathOperator{\PGL}{PGL}
\DeclareMathOperator{\GL}{GL}
\DeclareMathOperator{\SL}{SL}
\DeclareMathOperator{\Mor}{Mor}
\DeclareMathOperator{\codim}{codim}
\DeclareMathOperator{\Hom}{Hom}
\DeclareMathOperator{\Aut}{Aut}
\DeclareMathOperator{\Pic}{Pic}
\begin{document}
\setcounter{page}{1}
%
%

%
    \title[Cohomology of the moduli space of curves]{Equivariant
    geometry and the cohomology of the moduli space of curves}
%
%
%
\author{Dan Edidin}
\address{University of Missouri}
\email{edidind@missouri.edu}
\thanks{The author was supported by NSA grant H98230-08-1-0059 while
  preparing this article.}

%
%
\subjclass[2000]{Primary 14D23, 14H10; Secondary 14C15, 55N91}
\keywords{\replace{stacks, moduli of curves, equivariant cohomology}}

\begin{abstract}
In this chapter we give a categorical definition of the integral 
cohomology ring of
a stack. For quotient stacks $[X/G]$ the categorical cohomology ring may be
identified with the equivariant cohomology $H^*_G(X)$. Identifying
the stack cohomology ring with equivariant cohomology allows us to
prove that the cohomology ring of a quotient Deligne-Mumford stack is
rationally isomorphic to the cohomology ring of its coarse moduli
space. The theory is presented with a focus on the stacks $\Mg$ and
$\Mgbar$ of smooth and stable curves respectively.
\end{abstract}  

\maketitle
\thispagestyle{empty}
%
%
\tableofcontents

\section{Introduction}
The study of the cohomology of the moduli space of
curves has been a very rich research area for the last 30 years.
Contributions have been made to the field from a remarkably broad
range of researchers: algebraic geometers, topologists, mathematical
physicists, hyperbolic geometers, etc. The goal of this article is to
give an introduction to some of the foundational issues related to
studying the cohomology of the moduli space from the algebro-geometric
point of view.

In algebraic geometry the main difficulty in studying the cohomology
of the moduli space is that the moduli space is not really a space but a
{\em stack}.  As a result, care is required in determining what the
cohomology ring of the moduli space should mean. In the literature
this difficulty is often dealt with by arguing that the stack of
curves is an orbifold. Associated to an orbifold is an underlying
rational homology manifold. The rational cohomology ring of the
stack is then defined to be the cohomology ring of this underlying homology
manifold.

There are two difficulties with this perspective. First, it can
somewhat confusing to sort out the technicalities of intersection
theory on orbifolds, and second, one abandons hope of obtaining a good
theory with integer coefficients. In this article we will circumvent
these difficulties by utilizing the categorical nature of stacks.

We propose a very natural functorial definition for the
cohomology of a stack and explain how for quotient stacks $[X/G]$,
(like $\Mg$) our functorial cohomology can be identified with
equivariant cohomology of $X$. Because our techniques are algebraic we
are also able to define the integral Chow ring of a stack. 
On $\Mg$ and $\Mgbar$ the tautological classes all naturally live in
the functorial cohomology ring. In addition, if ${\mathcal X}$ is a
smooth quotient stack then
functorial group $A^1({\mathcal X})$ coincides with the Picard group
of the ``moduli problem'' defined earlier by Mumford in \cite{Mum:65}.

Using techniques from equivariant cohomology  we show that for smooth
quotient stacks such as
$\Mgbar$, the functorial cohomology groups are rationally isomorphic
to the cohomology groups of the underlying coarse moduli space. A
similar results holds for Chow groups. This isomorphism defines an
intersection product on the rational Chow groups of the projective, but
singular, moduli scheme of curves $\MMbar_g$.

To give a description of the cohomology of the stack of curves we
obviously must consider a more basic question. {\em What is a stack?}
In order to keep this chapter self-contained but still of a reasonable
length we will give a very brief introduction to theory of
Deligne-Mumford stacks via a series of examples. Most of our
discussion will focus on quotient stacks, because the geometry of
quotient stacks is, in essence, equivariant geometry of ordinary
schemes. Most stacks that naturally arise in algebraic geometry, such
as $\Mg$ and $\Mgbar$ are in fact quotient stacks. For a further
introduction to Deligne-Mumford 
stacks the reader is encouraged to look at Section 4
of Deligne and Mumford's paper \cite{DeMu:69} as well as Section 7 of
Vistoli's paper \cite{Vis:89}. The author's paper \cite{Edi:00} gives
an introduction to Deligne-Mumford stacks from the perspective of the
moduli space of curves. The book by Laumon and Moret-Bailly
\cite{LaMB:00} is the most comprehensive (and most technical) treatise
on the theory of algebraic stacks.

The Chapter is organized as follows. In Section \ref{sec.cfgs} we
define and give examples of {\em categories fibred in groupoids} (CFGs). Our
main focus is on {\em quotient CFGs} - that is CFGs arising from actions of linear algebraic groups on schemes.
In Section \ref{sec.cohcfg} we define the cohomology and Chow rings of a
CFG and prove that, for quotient CFGs, these rings can be identified
with equivariant cohomology and Chow rings respectively. In Sections
\ref{subsec.DMstacks} and \ref{subsec.DMcoarsemoduli} we define
Deligne-Mumford stacks and their coarse moduli spaces. Finally in
Section \ref{subsec.cohmoduli} we explain the relationship between the
cohomology ring of a Deligne-Mumford stack and its coarse moduli
space.

{\bf Acknowledgments:} The author is grateful to Damiano Fulghesu and to the referee for a number of helpful comments on earlier versions of this Chapter. The author also thanks Andrew Kresch for sending him a copy of his preprint \cite{Kre:11}.

\section{Categories fibred in groupoids (CFGs)} \label{sec.cfgs}
The purpose of this section is to give an introduction to 
the categorical underpinnings of the theory of stacks. We do not
define stacks until Section \ref{sec.stacks}. However, the
categorical foundation is sufficient to define the cohomology
and Chow rings of stacks,
which we do in Section \ref{sec.cohcfg}.

To begin, fix a base scheme $S$. For example, the analytically minded might take $S = \Spec \C$ and the arithmetically minded could consider $S = \Spec \Z$.
Let $\sS$ be the category of $S$-schemes.

\begin{defi} \cite[Definition 4.1]{DeMu:69}
A {\em category fibred in groupoids} (CFG) over $\sS$ is a category 
${\mathcal X}$ together with a functor $\rho \colon {\mathcal X} \to
\sS$ satisfying the following conditions.

i) Given an object $t$ of ${\mathcal X}$ let $T = \rho(t)$. If
$f \colon T' \to T$ in $\sS$ there exists a pullback object $f^*t$ and a morphism
$f^*t \to t$ in ${\mathcal X}$ whose image under the functor $\rho$ is the morphism $T' \to T$. Moreover, $f^*t$ is unique up to canonical isomorphism.

ii) If $\alpha \colon t_1 \to t_2$ is a map in ${\mathcal X}$ such that
$\rho(\alpha) = T \stackrel{id} \to T$ for some $S$-scheme $T$ then 
$\alpha$ is an isomorphism.
\end{defi}

\begin{remark} The first condition is just stating that the category
  ${\mathcal X}$ has fibred products relative to $S$. If ${\mathcal
    X}$ satisfies i) then ${\mathcal X}$ is called a {\em fibred
    category}.  The second conditions implies that if $T$ is a fixed
scheme then the subcategory ${\mathcal X}(T)$ consisting of objects
mapping to $T$ and morphisms mapping to the identity is a {\em
  groupoid}; that is all morphisms in ${\mathcal X}(T)$ are
isomorphisms.
Note that if $t$ is an object of ${\mathcal X}$  and $\rho(t) = T$
for some $S$-scheme $T$, then $t$ is an object
of
${\mathcal X}(T)$. Hence every object of ${\mathcal X}$ is in
${\mathcal X}(T)$ for some scheme $T$.

The second condition may seem a little strange, but it
  is  a natural one for moduli problems. The concept of 
category fibred groupoids over $\sS$ is a generalization of the
concept of contravariant functor from $\sS  \to \Sets$.
A stack is a CFG that 
satisfies certain algebro-geometric conditions analogous to the
conditions satisfied 
by a representable functor.
\end{remark}

To give a feel for the theory of CFGs we will focus on three examples:
CFGs of smooth and stable curves, representable CFGs, and quotient
CFGs. We will show that the CFGs of smooth and stable curves are
quotient CFGs. This implies that they are algebraic stacks, as
quotient CFGs are always algebraic stacks (although we will not prove
this here).

\subsection{CFGs of curves}
Throughout the rest of this paper a {\em curve} will denote a complete (hence projective) scheme of dimension 1.

\begin{defi}(Smooth curves)
For any $g \geq 2$, let $\Mg$ be the category whose objects are $\pi \colon X \to T$ where $T$ is an 
$S$-scheme and $\pi$ is a proper smooth morphism whose fibers are curves of genus $g$.
A morphism from $X' \to T'$ to $X \to T$
is simply a cartesian diagram.
$$\begin{array}{ccc} X' & \to & X \\
\downarrow & & \downarrow \\
T'  & \to & T
\end{array}
$$
\end{defi}

\begin{remark} Clearly $\Mg$ is a fibred category over $\sS$ by construction. To see that it is fibered in groupoids observe that if
$$\begin{array}{ccc} X' & \to & X\\
\downarrow & & \downarrow\\
S & \stackrel{id}\to & S
\end{array}
$$
is cartesian then the map $X' \to X$ is an isomorphism.
\end{remark}

Note that the fact that $\Mg$ is a category fibred in groupoids has
essentially nothing to do with the fact that we are attempting to
parametrize curves of given genus. The only thing we are using is that
morphisms in $\Mg$ are cartesian diagrams.
This shows that, like the notion of
functor, the concept of category fibred in groupoids is very
general. 

Next we define the CFG of stable curves.
\begin{defi} (Stable curves) \label{def.stable}
A curve $C$ of arithmetic genus $g
\geq 2$ is {\em stable} if it is connected, has at worst nodes as singularities and
if every rational component intersects the other components in at
least 3 points.
\end{defi}
\begin{defi}
If $g \geq 2$ then let $\Mgbar$ be the category whose objects
are $\pi \colon X \to T$ where $T$ is an $S$-scheme and $\pi$ is a proper
flat morphism whose fibers are stable curves of genus $g$.
Morphisms are again cartesian diagrams.
\end{defi}

Again $\Mgbar$ is a CFG containing $\Mg$ as a full subcategory. We can
relax the condition on rational components and obtain CFGs of {\em
  prestable} curves. However, if we do so then 
we will not obtain a Deligne-Mumford stack.

\begin{defi} (Pointed curves)
For any $g \geq 0$ we let $\M_{g,n}$ be the category whose objects
are $(X \stackrel{\pi} \to T, \sigma_1, \ldots , \sigma_n)$ where $\pi \colon X \to T$ is a family of smooth curves and 
$\sigma_1, \ldots , \sigma_n$ are disjoint sections of $\pi$.
A morphism 
$$(X' \stackrel{\pi'} \to T', \sigma_1', \ldots, \sigma_n') \to 
(X' \stackrel{\pi} \to T, \sigma_1, \ldots , \sigma_n)$$ is a cartesian
diagram
$$\begin{array}{ccc}
X' & \stackrel{f} \to & X\\
\pi'\downarrow & & \pi \downarrow\\
T' & \stackrel{g} \to & T
\end{array}
$$
such that $f\circ \sigma_i' = \sigma_i \circ g$ for all $i$.

If $2g -2 + n > 0$ then we define $\Mbar_{g,n}$ to be the CFG whose
objects are $(X \stackrel{\pi} \to T, \sigma_1, \ldots, \sigma_n)$
where $\pi$ is proper and flat and the fibers of $\pi$ are connected
curves with at worst nodes as signularities and the $\sigma_i$ are
again disjoint sections of $\pi$. We impose the additional condition
that every rational component of a fiber must have a total of at least
3 points marked by the sections plus intersections with the other
components.  A morphism is once again cartesian diagram compatible
with the sections.
\end{defi}
The CFG $\M_{g,n}$ is a full subcategory of the CFG $\Mbar_{g,n}$ when
$2g-2 + n > 0$. We may also relax the stability condition to obtain
the CFG of {\em prestable pointed curves} which also contains $\M_{g,n}$ as a
full subcategory.

\begin{remark}
All CFGs of curves are subcategories of the the ``universal'' CFG
$\Mor(\sS)$. Objects of $\Mor(\sS)$ are morphisms of $S$-schemes and
morphisms in $\Mor(\sS)$ are cartesian diagrams of $S$-schemes. The
CFGs
$\Mg$ and $\Mgbar$ are full subcategories of $\Mor(\sS)$ but
$\M_{g,n}$
and $\Mbar_{g,n}$ are not.
\end{remark}

\subsection{Representable CFGs}

Let $F \colon \sS \to \Sets$ be a contravariant functor. There is an
associated category fibred in groupoids $\underline{F}$. Given a
scheme $T$ in $\sS$ an object of $\underline{F}(T)$ is
simply an element of $F(T)$.  Given $t' \in F(T')$ there is a
morphism $t' \to t$ if and only if $t'$ is the image of $t$ under the
map of sets $F(T') \to F(T)$.  With this construction if $T$ is an
$S$-scheme then the groupoid $\underline{F}(T)$ is the category whose
objects are the elements of $F(T)$ and all morphisms are identities.

In particular if $X$ is an $S$-scheme then we can associate to its
functor of points a CFG $\underline{X}$. This is simply the category
of $X$-schemes viewed as a fibred category over the category of $S$-schemes.

\begin{defi}
A CFG ${\mathcal X}$ is {\em representable} if ${\mathcal X}$ is
equivalent to a CFG $\underline{X}$ for some $S$-scheme $X$.
\end{defi}
\begin{remark}
Yoneda's lemma implies that if $X$ and $Y$ are
schemes
then there is an isomorphism of schemes $X \to Y$ if and only if there
is an equivalence of categories $\underline{X} \to \underline{Y}$.
\end{remark}

\begin{example}
The existence of curves of every genus with non-trivial automorphism
implies that the CFG $\Mg$ is not representable (i.e., not the CFG
associated to a scheme) because if $k$ is a field then the category
$\Mg(\Spec k)$ is not equivalent to one where all morphisms are
identities.
\end{example}

\begin{prop} \label{prop.mapstocfg}
If ${\mathcal X}$ is a CFG and $T$ is an $S$-scheme then to give an object
of ${\mathcal X}(T)$ is equivalent to giving a functor
$\underline{T} \to {\mathcal X}$ compatible with the projection
functor to $\sS$.  
\end{prop}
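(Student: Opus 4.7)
The plan is to construct explicit passages in each direction between the collection of objects of $\mathcal{X}(T)$ and the collection of projection-compatible functors $\underline{T}\to\mathcal{X}$, and verify that they are mutually inverse up to canonical isomorphism. The forward direction is easy: if $F\colon\underline{T}\to\mathcal{X}$ is a functor commuting with projection to $\sS$, then $\mathrm{id}_T$ is the terminal object of $\underline{T}$, and compatibility with projections forces $\rho(F(\mathrm{id}_T))=T$, so $F(\mathrm{id}_T)$ is an object of $\mathcal{X}(T)$.

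For the backward direction I fix $t\in\mathcal{X}(T)$ and build a functor $F_t\colon\underline{T}\to\mathcal{X}$. Using axiom (i) of the CFG definition, I choose once and for all, for each morphism $f\colon T'\to T$ in $\sS$, a pullback object $f^*t$ together with a morphism $f^*t\to t$ projecting to $f$, normalized so that $\mathrm{id}_T^*t=t$ with the identity morphism. Put $F_t(f)=f^*t$. A morphism in $\underline{T}$ from $(f'\colon T''\to T)$ to $(f\colon T'\to T)$ is an $S$-morphism $g\colon T''\to T'$ with $fg=f'$; the composite $g^*(f^*t)\to f^*t\to t$ projects to $fg=f'$, so the essential uniqueness of pullbacks in axiom (i) supplies a canonical isomorphism $(f')^*t\cong g^*(f^*t)$, and postcomposing with $g^*(f^*t)\to f^*t$ produces the required arrow $F_t(f')\to F_t(f)$ over $g$. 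Functoriality on compositions is then forced by the universal property of pullbacks (itself a consequence of axioms (i) and (ii)): two parallel morphisms into a pullback over the same base scheme that become equal after composition with the pullback morphism to $t$ must already be equal.

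To finish I verify that the two constructions invert one another up to natural isomorphism. Starting from $t\in\mathcal{X}(T)$, the round trip yields $F_t(\mathrm{id}_T)=t$ by the normalization. Starting from a functor $F$ and setting $t=F(\mathrm{id}_T)$, for each $f\colon T'\to T$ the canonical morphism $f\to\mathrm{id}_T$ in $\underline{T}$ is sent by $F$ to a morphism $F(f)\to t$ projecting to $f$; comparison with the chosen pullback $f^*t=F_t(f)$ via axiom (i), together with axiom (ii) to upgrade the resulting comparison morphism over $\mathrm{id}_{T'}$ to an isomorphism, yields a canonical isomorphism $F_t(f)\cong F(f)$, natural in $f$. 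The main technical subtlety throughout is the essential uniqueness of pullbacks: since $f^*t$ is defined only up to canonical isomorphism one must make consistent choices and track them, and the key substantive point underlying the equivalence is that axioms (i) and (ii) together force \emph{every} morphism in $\mathcal{X}$ lying over a given $f$ to exhibit its source as a pullback, after which the rest of the argument is a routine bookkeeping exercise.
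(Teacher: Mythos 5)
Your proposal is correct and follows the same route as the paper's (much terser) proof: send $t$ to the functor $T'\mapsto f^*t$ built from chosen pullbacks, and send a functor $F$ to its value on the terminal object $\mathrm{id}_T$ of $\underline{T}$. You simply supply the bookkeeping the paper elides with the parenthetical remark that different choices of pullback give equivalent functors; your explicit use of the cartesian (universal) property of pullbacks to get functoriality and the natural isomorphism $F_t(f)\cong F(f)$ is exactly the content hidden in that remark.
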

\begin{proof}
Given an object $t$ in ${\mathcal X}(T)$ define a
functor $\underline{T} \to {\mathcal X}$ by mapping a
$T$-scheme $T'$ to a pullback of the object $t$ via the morphism $f
\colon T' \to
T$ in $\sS$. (Note that the definition of this functor requires a
choice for each pullback. However, different choices give rise to
equivalent functors). Conversely, given a functor $\underline{T} \to
{\mathcal X}$ we set $t$ to be the image of $T$ in ${\mathcal X}$.
\end{proof}

\begin{notation}
If $T$ is a scheme and ${\mathcal X}$ is a CFG we will streamline the
notation by writing $T \to {\mathcal X}$ in lieu of $\underline{T} \to
{\mathcal X}$.
\end{notation}

\begin{example}
By Proposition \ref{prop.mapstocfg} giving a family of smooth curves $X \to
T$ of genus $g$ is equivalent to giving a map $T \to \Mg$. 
 In this way $\Mg$ is the ``classifying
space'' for smooth curves. Similarly $\Mgbar$  classifies
stable curves.
\end{example}

\subsection{Curves and quotient CFGs}
Let $G$ be a linear algebraic group; i.e., a closed subgroup of $\GL_n$
for some $n$. For simplicity we assume that $G$ is
smooth over $S$. When $S = \Spec \C$ this is automatic, but the
assumption is necessary in positive or mixed characteristic.
\begin{defi}\label{def.torsor}
Let $T$ be a scheme. A $G$-torsor over $T$ is a smooth morphism
$p \colon E \to T$ where
$G$ acts freely on $E$, $p$ is $G$-invariant and there is an
isomorphism of $G$-spaces $E \times_{T} E \to G \times E$.
\end{defi}

\begin{defi} Let $BG$ be the CFG whose objects are $G$-torsors
$E \to T$ and whose morphism are cartesian diagrams 
$$\begin{array}{ccc}
E' & \to & E\\
\downarrow & & \downarrow \\
T' & \to & T
\end{array}
$$
with the added condition that the map $E' \to E$ is $G$-invariant.

More generally, if $X$ is a scheme and $G$ is an algebraic group acting on $X$
then we define a CFG $[X/G]$ to be the category whose objects are
pairs $(E \to T, E\stackrel{f} \to X)$ where $E
\to T$ is a $G$-torsor and $f \colon E \to X$ is a $G$-equivariant map. 
A morphism $(E' \to T', E' \stackrel{f'} \to X) \to (E \to T, E
\stackrel{f} \to T)$  in $[X/G]$ is a cartesian diagram of torsors
$$\begin{array}{ccc}
E' & \stackrel{h} \to & E\\
\downarrow & & \downarrow \\
T' & \to & T
\end{array}
$$
such that $f' = f \circ h$.
\end{defi}

\begin{defi}
A CFG is ${\mathcal X}$ is a {\em quotient CFG} if ${\mathcal X}$ is
equivalent to a CFG $[X/G]$ for some scheme $X$.
\end{defi}

\begin{example} If $X$ is a scheme then $\underline{X}$ is equivalent
to the quotient CFG $[(G \times X)/G]$ where $G$ acts on $G \times X$
by the rule $g(g',x) = (gg',x)$.
\end{example}

\begin{remark}
Although we have not yet discussed the geometry of CFGs the geometry of a quotient CFG $[X/G]$ is the $G$-equivariant geometry of $X$. This point of view will be emphasized in our discussion of cohomology rings.
\end{remark}

\subsubsection{$\Mg$ is a quotient CFG}

Although the definition of $\Mg$ as a CFG is purely categorical, the
fact that if $g\geq 2$ and $X \stackrel{\pi} \to T$ is a family 
of smooth curves then $\pi$ is a 
{\em projective} morphism allows us to prove that $\Mg$ is a quotient
CFG.

To state the result we introduce some notation.

\begin{defi}
Fix an integer $g \geq 2$ and let
$H$ be the Hilbert scheme of one dimensional subschemes of 
$\Pro^{5g-6}$ with Hilbert polynomial $(6t-1)(g-1)$. The action of
$\PGL_{5g-5}$ on 
$\Pro^{5g-6}$ induces a corresponding action on the Hilbert scheme.
If $g\geq 2$ then the canonical divisor on any curve $C$ is ample and $3K_C$ is very ample. Let
$H_g$ be the locally closed subscheme of $H$ corresponding to smooth curves
$C \subset \Pro^{5g-6}$ with ${\mathcal O}_C(1) \simeq \omega_C^{\otimes 3}$.
\end{defi}

\begin{prop} \label{prop.mgisquot}
There is an equivalence of categories $\Mg \to [H_g/\PGL_{5g-5}]$. Hence $\Mg$ is a quotient CFG.
\end{prop}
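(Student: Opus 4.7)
The plan is to exploit the tricanonical embedding. For any smooth curve $C$ of genus $g\geq 2$, the line bundle $\omega_C^{\otimes 3}$ is very ample of degree $6g-6$, with $h^0(\omega_C^{\otimes 3})=5g-5$ by Riemann--Roch. The resulting embedding lands in $\mathbb{P}(H^0(C,\omega_C^{\otimes 3})^*)\cong \Pro^{5g-6}$ and has image with Hilbert polynomial $(6t-1)(g-1)$. The embedding is canonical up to the choice of basis, and two bases differ by an element of $\PGL_{5g-5}$. So the content of the proposition is that parametrizing abstract smooth curves is the same as parametrizing tricanonically embedded curves up to change of projective frame.

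I would first define a functor $\Phi\colon \Mg\to [H_g/\PGL_{5g-5}]$. Given a family $\pi\colon X\to T$ of smooth genus $g$ curves, cohomology and base change (using $h^1(\omega_C^{\otimes 3})=0$ on fibers) shows that $\pi_*\omega_{X/T}^{\otimes 3}$ is locally free of rank $5g-5$. Take $E\to T$ to be the $\PGL_{5g-5}$-torsor $\underline{\Isom}_T(\Pro(\pi_*\omega_{X/T}^{\otimes 3}),\Pro^{5g-6}\times T)$ of projective frames. On $E$ the relative tricanonical projective bundle becomes canonically trivialized, so the pullback $X_E=X\times_T E$ acquires a $\PGL_{5g-5}$-equivariant closed embedding into $\Pro^{5g-6}\times E$ whose fibers land in $H_g$, producing the equivariant classifying morphism $E\to H_g$. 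In the reverse direction, the universal subscheme $\mathcal{C}\subset \Pro^{5g-6}\times H_g$ provides a universal tricanonically embedded family; given $(E\to T, \phi\colon E\to H_g)$, pull $\mathcal{C}$ back along $\phi$ to get a $\PGL_{5g-5}$-equivariant family $\mathcal{C}_E\to E$, and descend along the torsor $E\to T$ (which is faithfully flat, with $\PGL_{5g-5}$ acting freely) to obtain a family of smooth genus $g$ curves $X\to T$. This defines $\Psi\colon [H_g/\PGL_{5g-5}]\to \Mg$.

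The remaining work is to check that $\Phi$ and $\Psi$ are quasi-inverse and that both are compatible with morphisms (cartesian diagrams on the $\Mg$ side, cartesian diagrams of torsors over maps to $H_g$ on the quotient side). For $\Psi\circ\Phi$, the identification $X\cong \mathcal{C}_E/\PGL_{5g-5}$ is automatic from the construction of $E$. For $\Phi\circ\Psi$, one must verify that the tricanonical frame bundle of the descended family reproduces $E$ and the map $\phi$; this uses the defining condition $\mathcal{O}_C(1)\cong \omega_C^{\otimes 3}$ that cuts $H_g$ out of the Hilbert scheme, so that the ambient $\Pro^{5g-6}$ is canonically identified with the tricanonical projective space of each curve. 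The main obstacle is bookkeeping: checking that both round trips yield natural isomorphisms of fibered categories rather than merely pointwise identifications. Everything else reduces to the standard facts that $\omega_C^{\otimes 3}$ is very ample for $g\geq 2$ and that $\PGL_{5g-5}$-torsors are effective for descent of quasi-projective families.
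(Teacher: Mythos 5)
Your construction matches the paper's proof: the same tricanonical frame-bundle torsor $E\to T$ gives the functor $\Mg\to [H_g/\PGL_{5g-5}]$, and the inverse pulls back the universal embedded family over $H_g$ and descends along the torsor. The paper is slightly more careful at the descent step --- it stresses that quotients of schemes by free actions of algebraic groups need not exist as schemes, and invokes descent for \emph{projective} morphisms (\cite[Proposition 7.1]{MFK:94}), which applies here precisely because the polarization $\omega^{\otimes 3}$ is canonical and hence carries descent data --- but this is the same fact you appeal to when you say quasi-projective families descend along the torsor.
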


\begin{proof} 
We define a functor $p \colon \Mg \to [H_g/\PGL_{5g-5}]$ as follows: 

Given a
family $X \stackrel{\pi}\to T$ of smooth curves consider the
rank $(5g-6)$ projective space bundle 
$\Pro(\pi_* (\omega_{X/T}^{\otimes 3}))$ whose
fiber at a point $p \in T$ is the complete linear series $|3K_{X_{\pi^{-1}(t)}}|$.
Let $E \to T$ be
the associated $\PGL_{5g-5}$ torsor. The pullback of
$\Pro(\pi_*(\omega_{X/T}^{\otimes 3}))$ to $E$ is trivial and
defines an embedding of $X \times_T E \hookrightarrow
\Pro^{5g-6}_E$. Hence we obtain a morphism $E \to H_g$. The
construction is natural so the map $E \to H_g$ commutes with the
natural $\PGL_{5g-5}$ action on $E$ and $H_g$. Given 
a morphism 
$$\begin{array}{ccc} X' & \to & X \\
\downarrow & & \downarrow \\
T'  & \to & T
\end{array}
$$ our construction gives a morphism of
$\PGL_{5g-5}$-torsors
$$\begin{array}{ccc}
E' & \to & E\\
\downarrow & & \downarrow\\
T' & \to & T
\end{array}
$$
compatible with the maps $E' \to H_g$ and $E \to H_g$.

We now check that our functor is an equivalence by defining a functor\\
$q \colon [H_g/\PGL_{5g-5}] \to \Mg$ 
as follows: 

Given a torsor $E \to T$ and a map $E \to H_g$ we
obtain a family of projective curves $Z \to E$. There is an action of
$\PGL_{5g-5}$ on $Z$ such that the morphism $Z \to E$ is
$\PGL_{5g-5}$-equivariant. By definition of $E$ as the total space of a
$\PGL_{5g-5}$-torsor the action $G$ on $E$ is free. Since there is a
$G$-equivariant morphism $Z \to E$ it follows that the action of $G$
on $Z$ is also free. We would like to let $X$ be the quotient $Z$ by
the free $\PGL_{5g-5}$ action. Unfortunately, there is no {\em a
  priori} reason why the quotient of a scheme (even a projective or
quasi-projective scheme)
by the free action of an algebraic group exists in the category of
schemes\footnote{One can prove, using a non-trivial theorem of Deligne
  and Mumford, that such a quotient automatically exists as an
  algebraic space.}.  However, we are in a relatively special situation
in that we already know that a quotient $E/\PGL_{5g-5}$ exists as a
scheme (since it's equal to $T$) and the morphism $Z \to E$ is a {\em
  projective} morphism.  Descent theory for projective morphisms
(see \cite[Proposition 7.1]{MFK:94}) implies that there is a quotient $X
= Z/G$ which is projective over $T = Z/G$ and such that the diagram
$$\begin{array}{ccc}
Z & \to & E\\
\downarrow & & \downarrow\\
X & \to & T 
\end{array}
$$
is cartesian.
Hence $X \to T$ is a family of smooth curves over $T$.
A similar analysis defines the image of a morphism in
$[H_g/\PGL_{5g-5}]$, and one can check that there are natural transformations $q \circ p \to Id_{\Mg}$ and $p\circ q \to Id_{[H_g/\PGL_{5g-5}]}$.
\end{proof}

\subsubsection{$\Mbar_g$ is a quotient CFG}

The key fact about stable curves, and the reason that the theory of
stable curve is so elegant, is the following result of Deligne and Mumford.
\begin{thm} \cite[Thm 2.1]{DeMu:69}
If $X \stackrel{\pi} \to T$ is a family of stable curves then the dualizing sheaf $\omega_{\pi}$ is
locally free and relatively ample. Moreover, the sheaf $\omega_{\pi}^{\otimes 3}$ is relatively very
ample; i.e., if $p$ is a point of $T$ and $X_p = \pi^{-1}(p)$ then the line
bundle
$\omega_{X_p}^{\otimes 3}$ is very ample.
\end{thm}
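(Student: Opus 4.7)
The plan is to reduce everything to fiberwise statements on a single stable curve $C$ and then transfer them to the relative setting via cohomology and base change. The starting observation is that the nodes of a stable curve are hypersurface singularities (\'etale locally of the form $xy = 0$), hence Gorenstein. Consequently each fiber of $\pi$ is a Gorenstein curve, so the relative dualizing sheaf $\omega_\pi$ is an invertible sheaf on $X$, and Serre duality is available fiberwise.

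For the fiberwise ampleness of $\omega_C$, I would decompose $C = C_1 \cup \cdots \cup C_k$ into irreducible components, write $g_i$ for the arithmetic genus of $C_i$, and let $n_i$ denote the number of nodes of $C$ lying on $C_i$. The adjunction formula for nodal curves gives $\deg_{C_i}(\omega_C) = 2g_i - 2 + n_i$, and I would check that stability, connectedness, and the hypothesis $g \geq 2$ force this to be strictly positive on every component: if $g_i = 0$ then $n_i \geq 3$ gives degree at least $1$; if $g_i \geq 2$ then $2g_i - 2 \geq 2$; and if $g_i = 1$ then $n_i \geq 1$, for otherwise $C = C_i$ and $g = 1$. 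Ampleness of a line bundle on a projective curve is equivalent to positivity of the degree on every component, so $\omega_C$ is ample.

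For very ampleness of $\omega_C^{\otimes 3}$ I would use the criterion that a line bundle $L$ on a projective curve is very ample iff for every length-$2$ subscheme $Z \subset C$ the restriction map $H^0(C, L) \to H^0(Z, L|_Z)$ is surjective, equivalently $H^1(C, L \otimes \mathcal{I}_Z) = 0$. By Serre duality on the Gorenstein curve $C$, this vanishing is dual to $\Hom_{\mathcal{O}_C}(\mathcal{I}_Z, \omega_C^{-2}) = 0$, which I would verify by a component-by-component degree estimate using the bound $\deg_{C_i}(\omega_C) \geq 1$: after twisting by $\mathcal{I}_Z^{-1}$ the sheaf $\omega_C^{-2}$ acquires degree at most $-2 + 2 = 0$ on any given component, and in the equality cases any potential section is forced to vanish by looking at neighboring components of strictly negative degree and using compatibility at the nodes.

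Once the fiberwise vanishing is in hand, the standard cohomology and base change argument applied to the proper flat morphism $\pi$ shows that $\pi_* \omega_\pi^{\otimes 3}$ is locally free of rank $5g - 5$ and its formation commutes with base change, which promotes fiberwise very ampleness to relative very ampleness of $\omega_\pi^{\otimes 3}$ and hence to relative ampleness of $\omega_\pi$. The hardest step is the very-ampleness criterion: when $Z$ is supported at a node, or straddles two components, the ideal sheaf $\mathcal{I}_Z$ is no longer invertible, so one must analyze the local situation at the node directly (for example via the conductor sequence comparing $C$ with its partial normalization) and verify separately that $\omega_C^{\otimes 3}$ separates tangent directions at each branch of a node.
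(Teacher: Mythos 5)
The paper does not prove this statement: it is quoted verbatim from Deligne--Mumford and cited as such, so there is no internal argument to compare yours against. Judged on its own, your outline follows the standard (indeed, essentially Deligne--Mumford's own) route: Gorenstein fibers make $\omega_\pi$ invertible, the degree formula $\deg_{C_i}(\omega_C)=2g_i-2+n_i$ plus stability gives positive degree on every component, very ampleness of $\omega_C^{\otimes 3}$ is checked by the length-two-subscheme criterion and Serre duality, and cohomology and base change (using $h^1(C,\omega_C^{\otimes 3})=0$, which follows from the same duality since $\omega_C^{-2}$ has negative degree on every component) lifts everything to the relative setting. Two small cautions on the degree formula: your $n_i$ should count the points where $C_i$ meets the closure of the other components (a self-node of $C_i$ is already absorbed into the arithmetic genus $g_i$, so counting it again in $n_i$ double-counts), and in the boundary case $\deg_{C_i}(\omega_C^{-2}\otimes\mathcal I_Z^{-1})=0$ you should note explicitly that $C_i$ necessarily meets another component (an irreducible $C$ has $\deg\omega_C=2g-2\ge 2$), which is what makes the ``forced to vanish by neighboring components'' step available.

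The genuine gap is the one you yourself flag and then defer: the case where the length-two subscheme $Z$ is supported at a node, or where one verifies that $\omega_C^{\otimes 3}$ separates the two branches of a node and separates tangent directions there. In that case $\mathcal I_Z$ is not invertible, $\operatorname{Ext}$-duality replaces the naive line-bundle duality, and the component-by-component degree count does not directly apply; this is precisely the part of Deligne--Mumford's Theorem~1.2 that carries the content (they handle it by passing to the partial normalization at the node and comparing $\omega_C$ with $\omega_{\widetilde C}$ twisted by the preimages of the node, i.e.\ the conductor comparison you allude to). As written, your argument is a correct plan whose hardest step is named but not executed, so it should be regarded as incomplete until that local analysis at the nodes is actually carried out.
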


The same argument used in the proof of Proposition \ref{prop.mgisquot}
can now be used to prove that the CFG $\Mbar_g$ is a quotient CFG.

\begin{prop}\label{prop.mgbarisquot}
Let $\Hbar_g$ be the locally closed subscheme of $H$ parametrizing 
embedded stable curves $C$ where $O_C(1) \simeq \omega_C^{\otimes 3}$.
Then there is an
equivalence of categories $\Mgbar \to [\Hbar_g/\PGL(5g-5)]$.
\end{prop}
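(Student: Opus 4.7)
The strategy is to mirror verbatim the proof of Proposition~\ref{prop.mgisquot}, substituting the Deligne--Mumford theorem on tricanonical embeddings of stable curves for the elementary fact that $\omega_C^{\otimes 3}$ is very ample on a smooth curve of genus $g \geq 2$. First I would verify that the numerical data match up: for a stable curve $C$ of genus $g$, the dualizing sheaf $\omega_C$ has degree $2g-2$, so $\omega_C^{\otimes 3}$ has degree $6g-6$, and since $\omega_C$ is ample a standard Riemann--Roch computation on nodal curves gives $h^0(C,\omega_C^{\otimes 3}) = 5g-5$ with $h^1$ vanishing. This means the tricanonical embedding lands in $\Pro^{5g-6}$ and the image has Hilbert polynomial $(6t-1)(g-1)$, so $\Hbar_g$ is indeed the right locally closed subscheme of $H$.

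Next, I would construct the functor $p \colon \Mgbar \to [\Hbar_g/\PGL_{5g-5}]$ exactly as before. Given a family of stable curves $\pi \colon X \to T$, the Deligne--Mumford theorem ensures $\pi_*(\omega_{X/T}^{\otimes 3})$ is a locally free sheaf of rank $5g-5$ (by cohomology and base change, since the fiberwise $h^0$ is constant and $h^1$ vanishes), so $\Pro(\pi_*(\omega_{X/T}^{\otimes 3}))$ is a $\Pro^{5g-6}$-bundle over $T$ with associated $\PGL_{5g-5}$-torsor $E \to T$. Pulling back to $E$ trivializes this projective bundle, and relative very ampleness of $\omega_\pi^{\otimes 3}$ produces a closed embedding $X \times_T E \hookrightarrow \Pro^{5g-6}_E$ whose fibers are tricanonically embedded stable curves, i.e.\ a $\PGL_{5g-5}$-equivariant classifying morphism $E \to \Hbar_g$. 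Cartesian diagrams in $\Mgbar$ produce cartesian diagrams of torsors compatible with the maps to $\Hbar_g$, so $p$ is a well-defined functor of CFGs.

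For the reverse functor $q \colon [\Hbar_g/\PGL_{5g-5}] \to \Mgbar$, I would start with a torsor $E \to T$ together with an equivariant map $E \to \Hbar_g$, pull back the universal family to obtain a family of embedded stable curves $Z \to E$, and then descend $Z$ along the torsor $E \to T$. The key point, and the only place where genuine work beyond the smooth case is needed, is the existence of $X = Z/\PGL_{5g-5}$ as a scheme projective over $T$. As in Proposition~\ref{prop.mgisquot}, this is guaranteed by descent for projective morphisms (\cite[Prop.~7.1]{MFK:94}): the quotient $T = E/\PGL_{5g-5}$ already exists, the action on $Z$ is free (inherited from the free action on $E$ via the equivariant morphism $Z \to E$), and $Z \to E$ is projective because $Z$ is a family of curves in $\Pro^{5g-6}_E$. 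This yields a cartesian square with $X \to T$ a family of stable curves, since stability is preserved by base change and descent.

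Finally, I would check that $q \circ p$ and $p \circ q$ are naturally isomorphic to the respective identity functors. In one direction, applying $q$ to the torsor-and-embedding data produced from $\pi\colon X \to T$ returns the original family by the uniqueness of descent (the pulled-back family is canonically $X \times_T E$). In the other direction, starting from $(E \to T, E \to \Hbar_g)$, forming the descended family $X \to T$ and then its frame bundle of tricanonical sections recovers $E$ up to canonical isomorphism, since $\omega_{X/T}^{\otimes 3}$ descends the relatively very ample bundle on $Z$. The main obstacle throughout is the descent step in $q$; everything else is essentially a formal repetition of the proof for $\Mg$, now legitimized by the Deligne--Mumford relative very ampleness theorem.
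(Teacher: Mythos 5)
Your proposal is correct and follows exactly the route the paper takes: the paper's entire proof is the remark that the argument of Proposition \ref{prop.mgisquot} carries over verbatim once the Deligne--Mumford theorem supplies relative very ampleness of $\omega_\pi^{\otimes 3}$, which is precisely your strategy (you simply spell out the numerical and descent details the paper leaves implicit).
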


\subsubsection{Curves of very low genus}
In the previous section we only considered curves of genus $g \geq
2$. The purpose of this section is to briefly discuss the CFGs of
curves of genus 0 and 1.

\begin{example}[Curves of genus 0]
Let $\M_0$ be the CFG of nodal curves of
arithmetic genus 0. A nodal curve of genus 0 is necessarily a tree of
$\Pro^1$s. However, there is no bound on the number of components. The
CFG $\M_0$ can be stratified by the number of nodes (or equivalently
irreducible components). Following \cite{Fulg:09a} we denote by $\M_0^{\leq
  k}$ the CFG whose objects are families of rational nodal curves with
at most $k$ nodes. The CFG $\M_0^0$ of smooth rational curves is
equivalent to $B\PGL_2$. In \cite[Proposition 6]{EdFu:08} it was shown that 
$\M_0^{\leq 1}$ is equivalent to the CFG $[X/\GL_3]$ where $X$ is the
set of quadratic forms in 3 variables with rank at least 2. On the
other hand there are families of rational curves $X \stackrel{\pi} \to
T$ where $\pi$ is not a projective morphism. In fact, Fulghesu
constructed examples where $T$ is a scheme and $X$ is only an
algebraic space. Despite the
pathological behavior, Fulghesu \cite{Fulg:09a}
proved that $\M_0^{\leq k}$ is an algebraic
stack for any $k$.
Recently Kresch \cite[Proposition 5.2]{Kre:11} proved that for $k \geq 2$, 
$\M_0^{\leq k}$ is not equivalent to a quotient of the form $[Z/G]$ with
$Z$ an algebraic space; ie $\M_0^{\leq k}$ is not a quotient stack.

On the other hand, if $n \geq 3$ then the CFGs $\M_{0,n}$ and
$\Mbar_{0,n}$ are well understood. They are represented by
non-singular projective schemes \cite{Knu:83a}.
\end{example}

\begin{example}[Curves of genus 1]
The CFG $\M_1$, of curves of genus 1, is rather strange. Its behavior 
highlights the distinction between curves of genus 1 and elliptic
curves. An elliptic curve is a curve of genus 1 together with a point
chosen to be the origin for the group law. In particular an elliptic
curve is a projective algebraic group, while a curve of genus 1 is a
torsor for this group.
Given a curve $C$ of genus 1 together with a choice of a point $O
\in C$ the group scheme $(C,O)$ acts on $C$. In particular, the
automorphism group of a curve is not a linear algebraic group. Thus
the CFG ${\mathcal M}_1$ cannot be equivalent to a CFG of the form
$[X/G]$ with $G$ a linear algebraic group. 

However, the CFG $\M_{1,1}$ of elliptic curves behaves a lot like
the CFGs $\Mg$ for $g \geq 2$. Specifically, if $(X \stackrel{\pi} \to
T, \sigma)$ is a family of smooth curves of genus 1 with section
$\sigma$ the direct image $\pi_*(O_X(\sigma)^{\otimes 3})$ is a locally free
sheaf of rank 3. An argument similar to the Proof of Proposition 
\ref{prop.mgisquot}
can be used to show that $\M_{1,1} = [U/\PGL_3]$ where $U \subset \Pro^9$ is
the open set of smooth plane cubics. Similarly $\Mbar_{1,1} =
[W/\PGL_3]$
where $W$ is the open set of plane cubics with at  worst nodes as
singularities.
\end{example}

\subsection{Fiber products of CFGs and universal curves}
So far we have not discussed how universal curves fit into the CFG
picture. In order to do so we need to introduce another categorical
notion - the fiber product of two CFGs over a third CFG. Once this is
done, we can explain why $\M_{g,1}$ is the universal
curve over $\Mg$. Applying this fact inductively we can show that the
CFGs
$\M_{g,n}$ are $\Mbar_{g,n}$ are all quotient CFGs.

\subsubsection{Fiber products of CFGs}
Since CFGs are categories the natural home for the class of all CFGs
over $\sS$ is called a {\em 2-category}. A 2-category has objects
(in our case the CFGs), morphisms (functors between the CFGs) and
morphisms between morphisms (natural transformations of functors).
For the most part this added level of complexity can be ignored, but
it does show up in one of the most important categorical
constructions, the fiber product of two CFGs.

\begin{defi}
Let $f \colon {\mathcal X} \to {\mathcal Z}$ and $g \colon {\mathcal Y}
\to {\mathcal Z}$ be CFGs over our fixed category $\sS$. We define the
fiber product ${\mathcal X} \times_{\mathcal Z} {\mathcal Y}$ to be
the CFG whose objects
are triples $(x,y,\phi)$ where $x$ is an object of ${\mathcal X}$, $y$
is an object of ${\mathcal Y}$ and $\phi$ is an isomorphism in
${\mathcal Z}$
between $f(x)$ and $f(y)$.

A morphism in ${\mathcal X} \times_{\mathcal Z} {\mathcal Y}$ between
$(x',y',\phi^\prime)$ and $(x,y,\phi)$ is given by a pair of morphisms
$\alpha\colon x' \to x$, $\beta \colon y' \to y$ such that $\phi \circ
f(\alpha) = g(\beta) \circ \phi'$.
\end{defi}

\begin{remark}
  A straightforward check shows that ${\mathcal X} \times_{\mathcal Z}
  {\mathcal Y}$ is indeed a CFG. There are also obvious functors $p_X
  \colon {\mathcal X}\times_{\mathcal Z} {\mathcal Y} \to {\mathcal
    X}$ and $p_Y \colon {\mathcal X}\times_{\mathcal Z} {\mathcal Y}
  \to {\mathcal Y}$ but the compositions $f \circ p_X$ and $g \circ
  p_Y$ are not equal as functors ${\mathcal X}\times_{\mathcal Z}
  {\mathcal Y} \to {\mathcal Z}$ since $f \circ p_X (x,y,\theta) =
  f(x)$ and $g \circ p_Y (x,y,\theta) = g(y)$ are isomorphic but not
  not necessarily the same objects of ${\mathcal Z}$.  However, there
  is a natural transformation of functors $F \colon f \circ p_X
 \to g \circ p_Y$. For this reason we say that the
  diagram
$$\begin{array}{ccc}
{\mathcal X}\times_{\mathcal Z} {\mathcal Y} & \stackrel{p_Y} \to & {\mathcal Y}\\
p_X \downarrow & & g \downarrow \\
{\mathcal X} & \stackrel{f}\to & {\mathcal Z}
\end{array}
$$
is {\em 2-cartesian}. In general a diagram of CFGs 
$$\begin{array}{ccc}
{\mathcal W} & \stackrel{g}\to & \mathcal{X}\\
p\downarrow & & q\downarrow \\
{\mathcal Y} &  \stackrel{f} \to & {\mathcal Z}
\end{array}$$
is {\em 2-commutative} if there is a natural transformation between the 
functors $q
\circ g$ and $f \circ p$.
\end{remark}

So far our discussion of CFGs has been very categorical. The next
definition is the first step towards connecting the theory of CFGs
with algebraic  geometry.
\begin{defi}
A morphism of CFGs $f \colon {\mathcal Y} \to {\mathcal X}$ is {\em
  representable}
if for every scheme $T$ and morphism $T \to {\mathcal X}$ the fiber
product of CFGs
$T \times_{\mathcal X} {\mathcal Y}$ is represented by a scheme.
\end{defi}

\begin{remark} \label{rem.propertyp}
This definition is saying that although ${\mathcal Y}$ and ${\mathcal
  X}$ are not represented by schemes the fibers of the morphism are
schemes. It also allows us to define algebro-geometric properties of
representable morphisms of CFGs. If {\bf P} is a property of
morphisms of schemes which is
preserved by base change\footnote{This covers many of the most common
  types of morphisms encountered in algebraic geometry. For example
  the properties of being separated, finite, proper, flat and smooth can
  are all preserved by base change.}
then a representable morphism ${\mathcal Y} \to {\mathcal X}$ has
property {\bf P} if for every map of scheme  $T \to {\mathcal X}$ the map of schemes
$T \times_{\mathcal X} {\mathcal Y} \to T$ has property {\bf P}.
\end{remark}

\begin{example}
If $G$ is an algebraic group acting on a scheme $X$ then there is a morphism $X \to [X/G]$
defined as follows. Given a map of schemes $f \colon T \to X$ (an object of the
category \underline{X}) we can consider the trivial torsor $G \times T
\to T$ together with the $G$-equivariant map $(g,t) \mapsto g
f(t)$ to define an object of $[X/G]$. From this definition it is clear
that a morphism of $X$-schemes $T' \to T$ gives rise to a morphism in
the CFG $[X/G]$.

We claim that the map $X \to [X/G]$ is representable. In fact if $T
\to [X/G]$ is a morphism corresponding to a $G$-torsor $E \to T$ with
equivariant map $f \colon E \to X$ then
$T \times_{[X/G]} X$ is represented by the scheme $E$. Let us see this
explicitly.
An object of $(T \times_{[X/G]} X)(T')$ is a triple
$(f_1,f_2,\phi)$ where $ T' \stackrel{f_1} \to T$ makes $T'$ a $T$-scheme, $T' \stackrel{f_2} \to X$ makes $T$ an $X$-scheme and
$\phi$ is an isomorphism in $[X/G]$ between the two images of $T'$ in
$[X/G]$.
The image of $T' \stackrel{f_1} \to T$ is the torsor $E \times_{T}
T' \to T'$ with $G$-equivariant map the composition $E \times_{T} T'
\to E \to X$. The image of $T' \stackrel{f_2} \to X$ is the trivial
torsor
$G \times T' \to T'$ with equivariant map $(g,t') \mapsto g f_2(t')$.
The isomorphism $\phi$ in $[X/G]$ gives an isomorphism between the
trivial torsor 
$G \times T' \to T'$ and the pullback torsor $E \times_T T' \to
T'$. In other words $\phi$ determines a trivialization of the pullback torsor
$E \times_T T' \to T'$. The trivialization gives a section $T' \to E
\times_T T'$. Composing the section with the projection $E \times_T T'
\to E$ gives a map $T' \to E$; i.e., an object of $\underline{E}$. A
similar, if more involved, analysis shows that morphisms in the category $T
\times_{[X/G]} X$ determine morphisms of $X$-schemes. In this way
we obtain a functor 
$T \times_{[X/G]} X \to \underline{E}$. This functor is in equivalence
of categories. To see this note that if $T' \to X$ is an $X$-scheme then the map $T' \to X$
determines a trivialization of the pullback torsor $(G \times X)_X T' \to T'$
and thus an object of $(T \times_{[X/G]} X)(T')$. 
\end{example}

\begin{example}
The fully faithful inclusion functor $\Mg \to \Mbar_{g}$ is
represented
by open immersions. Thus we say that $\Mg$ is an open subCFG of
$\Mbar_g$.
\end{example}

\subsubsection{Smooth pointed curves and the universal curve}
There is a an obvious morphism of CFGs $\M_{g,1} \to \Mg$ that forgets
the section.

\begin{prop}
If $T \to \Mg$ is a morphism corresponding to a family of
smooth curves of genus $g$, $X \to T$ then the fiber product $T
\times_{\Mg} \M_{g,1}$ is represented by the scheme $X$. Hence the
map $\M_{g,1} \to \Mg$ is representable and smooth and  we can view $\M_{g,1}$
as the universal curve over $\Mg$.
\end{prop}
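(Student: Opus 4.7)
The plan is to directly unwind the definition of the $2$-fiber product and exhibit a functor $\underline{X} \to T\times_{\Mg} \M_{g,1}$ which is an equivalence of categories; representability and smoothness then follow immediately from the hypothesis that $X \to T$ is a smooth family of curves.

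First I would describe the objects and morphisms of $T\times_{\Mg} \M_{g,1}$ explicitly. An object over an $S$-scheme $T'$ is a triple $(f_1, (Y\stackrel{\pi'}\to T', \sigma'), \phi)$, where $f_1\colon T'\to T$ is a $T$-scheme structure, $(\pi'\colon Y\to T',\sigma')$ is a pointed smooth family of genus $g$, and $\phi$ is an isomorphism in $\Mg(T')$ between the pullback family $X\times_T T' \to T'$ (coming from the composition $T' \stackrel{f_1}\to T \to \Mg$) and the family $Y \to T'$ obtained by forgetting the section. Using $\phi$ to identify $Y$ with $X\times_T T'$, the datum of $\sigma'$ becomes a section of the projection $X\times_T T' \to T'$; equivalently, by the universal property of the fiber product, a morphism $T' \to X$ over $T$. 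This gives a functor $F\colon T\times_{\Mg} \M_{g,1} \to \underline{X}$.

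Next I would construct the inverse. Given $h\colon T' \to X$ in $\underline{X}(T')$, let $f_1=\pi\circ h\colon T'\to T$, and form the pullback family $X\times_T T' \to T'$; then $h$ together with $f_1$ determines a section $\sigma'_h\colon T' \to X\times_T T'$, and we take $\phi$ to be the tautological identity isomorphism. This gives a functor $G\colon \underline{X} \to T\times_{\Mg} \M_{g,1}$. One then checks on morphisms (a morphism in the fiber product being a pair $(\alpha,\beta)$ compatible with $\phi$ and the sections) that $F$ and $G$ are quasi-inverse; this is essentially bookkeeping once one remembers that an isomorphism of smooth pointed curves is determined by the underlying curve isomorphism together with the requirement that it carry one section to the other.

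Finally, with the equivalence $T\times_{\Mg} \M_{g,1} \simeq \underline{X}$ established, representability of $T\times_{\Mg} \M_{g,1}$ by $X$ is immediate; by the definition recalled in Remark~\ref{rem.propertyp}, since this holds for every $T\to \Mg$ and since the projection $X\to T$ is smooth (its fibers being smooth curves and the morphism being flat of relative dimension $1$), the morphism $\M_{g,1}\to \Mg$ is representable and smooth, and $\M_{g,1}$ is thereby identified with the universal curve. The only place where care is needed is the verification that $G\circ F$ and $F\circ G$ are naturally isomorphic to the identity functors; this is the step I expect to be the main bookkeeping obstacle, as one must chase the isomorphism $\phi$ through the identification between sections of $X\times_T T'\to T'$ and $T$-morphisms $T'\to X$.
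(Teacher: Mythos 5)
Your proposal is correct and follows essentially the same route as the paper: unwind the objects of the $2$-fiber product, use the isomorphism $\phi$ to transport the section $\sigma'$ to a section of $X\times_T T'\to T'$, and compose with the projection to $X$ to obtain an object of $\underline{X}(T')$, with the paper likewise leaving the verification on morphisms as a routine check. Your explicit construction of the quasi-inverse functor and the remark on smoothness are harmless additions to what the paper sketches.
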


\begin{proof}[Sketch of proof]
An object of the fiber product $(T
\times_{\Mg} \M_{g,1})(T')$ is given by the following data: a
morphism $T' \to T$, a family of pointed curves $X' \to T'$ with
section $\sigma'$, and isomorphism in $\Mg$ of the pullback family $X
\times_{T} T' \to T'$ with the family $X' \to T'$. Since the map $T'
\to X'$ has a section,
the projection $X \times_T T' \to T'$ also has
a section. Composing this section with the projection $X \times_T T'
\to X$ gives a map $T' \to X$, i.e., an object $\underline{X}(T')$.
Again, a straightforward (if tedious) check shows that this procedure maps
morphisms in the category $T \times_{\Mg} \M_{g,1}$ to morphisms in the category
$\underline{X}$.  This defines a functor
which is an equivalence of categories.
\end{proof}

\subsubsection{Stable pointed curves and the universal curve}
A similar argument can be used to show that $\Mbar_{g,1}$ is the
universal stable curve over $\Mbar_g$. However, this argument requires
more care because the morphism of CFGs $\Mbar_{g,1} \to \Mbar_g$
is not as tautological as the corresponding morphism for smooth
curves. The reason is that if $(X \to T, \sigma)$ is a family of
pointed stable curves, the corresponding family of curves $X \to T$
need not be stable as it may have rational components which intersect
the other components in only 2 points. To define a functor
$\Mbar_{g,1} \to \Mgbar$ we must show that given such family of
curves $X \to T$, we may contract the offending rational components in
the fibers of $X \to T$. This analysis was carried out by F. Knudsen
in his paper \cite{Knu:83a}. 
\begin{thm}[Knudsen \cite{Knu:83a}] \label{thm.knudsen} 
If $2g -2 + n > 0$
there is a representable contraction morphism $\Mbar_{g,n+1} \to
\Mbar_{g,n}$ that makes $\Mbar_{g,n+1}$ into the universal curve over $\Mbar_{g,n}$.
\end{thm}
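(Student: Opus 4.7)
The plan is to follow Knudsen's strategy in \cite{Knu:83a}: first construct the contraction morphism using a twisted relative dualizing sheaf, then verify that its fiber product with any $T \to \Mbar_{g,n}$ is represented by the universal family.

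\textbf{Construction of the contraction.} Given a family $(X \stackrel{\pi}\to T, \sigma_1,\ldots,\sigma_{n+1})$ of stable $(n+1)$-pointed genus $g$ curves, I would consider the line bundle $L = \omega_{\pi}(\sigma_1+\cdots+\sigma_n)$, twisting the relative dualizing sheaf by the first $n$ sections only. On a rational component $E$ of a geometric fiber $X_t$, adjunction gives $\deg L|_E = m_E + n_E - 2$, where $m_E$ is the number of nodes of $X_t$ lying on $E$ and $n_E$ the number of sections among $\sigma_1,\ldots,\sigma_n$ meeting $E$. Stability of the $(n+1)$-pointed fiber forces $m_E + n_E \geq 2$, with equality precisely on those rational components that become unstable once $\sigma_{n+1}$ is forgotten. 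Thus $L$ has non-negative degree on every component of every fiber and strictly positive degree off these would-be unstable components. A cohomology-and-base-change argument shows that $\pi_*L^{\otimes k}$ is locally free and commutes with base change for $k$ sufficiently large, and $Y := \mathrm{Proj}\bigoplus_{k\geq 0}\pi_*L^{\otimes k} \to T$ is a flat family that fiberwise collapses each degree-zero component to a point and is an isomorphism elsewhere. The sections $\sigma_1,\ldots,\sigma_n$ descend to disjoint sections $\tau_1,\ldots,\tau_n$ of $Y\to T$, producing a family of stable $n$-pointed curves. The construction is functorial in $T$, defining the contraction morphism $c \colon \Mbar_{g,n+1} \to \Mbar_{g,n}$.

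\textbf{Identification with the universal curve.} Given a morphism $T \to \Mbar_{g,n}$ corresponding to a family $(Y \to T, \tau_1,\ldots,\tau_n)$, I would construct a functorial equivalence $T\times_{\Mbar_{g,n}}\Mbar_{g,n+1} \simeq \underline{Y}$. In one direction, an object of the fiber product over $T'$ is an $(n+1)$-pointed stable family $(X' \to T', \sigma_1',\ldots,\sigma_{n+1}')$ together with an isomorphism of its contraction with $Y_{T'}:=Y\times_T T'$; the image of $\sigma_{n+1}'$ under $X'\to Y_{T'}$ is a section of $Y_{T'}\to T'$, equivalently a $T$-morphism $T'\to Y$. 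In the reverse direction, given $s\colon T'\to Y$: where $s$ avoids the $\tau_i$ and the nodal locus of $Y_{T'}\to T'$, take $X'=Y_{T'}$ with $\sigma_{n+1}'=s$; where $s$ coincides with some $\tau_i$ or lands on a node, perform Knudsen's relative blow-up at the image of $s$ to insert a $\Pro^1$ carrying the appropriate marked points, producing an $(n+1)$-stable family whose contraction recovers $(Y_{T'},\tau_1,\ldots,\tau_n)$. Checking that these constructions are mutually quasi-inverse yields the equivalence, which simultaneously establishes representability of $c$ and identifies $\Mbar_{g,n+1}$ with the universal curve.

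\textbf{Main obstacle.} The delicate part is the stabilization construction when $s\colon T'\to Y$ passes through a node of some geometric fiber: the étale-local model of $Y$ there is $xy=f$ for some function $f$ on the base, and one must produce a flat family with an inserted $\Pro^1$ whose formation commutes with arbitrary base change and genuinely inverts the contraction, up to natural isomorphism, even over non-reduced bases. Getting the deformation-theoretic details right is the technical core of Knudsen's paper; once this is in hand, both the representability of $c$ and the universal curve property follow formally from the equivalence with $\underline Y$.
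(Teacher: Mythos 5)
The paper does not prove this theorem at all: it is imported verbatim from Knudsen's paper \cite{Knu:83a}, and the surrounding text only explains \emph{why} a nontrivial contraction is needed (a stable $(n+1)$-pointed curve may become unstable when the last section is forgotten). Your proposal therefore goes well beyond what the paper does, and what you sketch is a faithful outline of Knudsen's actual argument: the degree computation $\deg L|_E = m_E + n_E - 2$ for $L = \omega_\pi(\sigma_1+\cdots+\sigma_n)$ is correct, it isolates exactly the components destabilized by forgetting $\sigma_{n+1}$, and the identification of $T\times_{\Mbar_{g,n}}\Mbar_{g,n+1}$ with $\underline{Y}$ via the image of $\sigma_{n+1}'$ is precisely how representability and the universal-curve property are established (compare the paper's analogous, easier argument for $\M_{g,1}\to\M_g$). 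One caution on the point you yourself flag as the main obstacle: describing the stabilization as a ``relative blow-up'' is only heuristically right, since blow-ups do not commute with base change and fail over non-reduced bases; Knudsen instead defines the stabilized family as a $\mathrm{Proj}$ of an explicitly constructed sheaf of algebras (built from the ideal of the section and the twisted dualizing sheaf), which is functorial by construction. Since you acknowledge this gap rather than claim to close it, your writeup is an honest proof sketch at the same level of detail one would expect if the paper had chosen to outline Knudsen's proof rather than cite it.
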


\begin{remark}
It is relatively easy to show that
$\M_{0,3} = \Mbar_{0,3} = \Spec S$. It follows from Knudsen's results
that for $n \geq 3$ the CFGs $\Mbar_{0,n}$ are all represented by schemes.
\end{remark}

Combining Theorem \ref{thm.knudsen} with the following proposition
shows that CFGs of pointed stable curves of genus $g \geq 1$ are quotient CFGs.

\begin{prop}
If ${\mathcal X}  =[X/G]$ is a quotient CFG and ${\mathcal Y} \to {\mathcal X}$ is a representable morphism 
then ${\mathcal Y}$ is equivalent to the CFG $[Y/G]$
where $Y = X \times_{\mathcal X} {\mathcal Y}$. 
\end{prop}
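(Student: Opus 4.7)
The plan is to exploit the fact, established in the earlier example in the text, that the canonical map $X \to [X/G]$ is representable and realizes the universal $G$-torsor over $[X/G]$: for any morphism $T \to [X/G]$ from a scheme, $T \times_{[X/G]} X$ is precisely the associated $G$-torsor $E \to T$. Base-changing along ${\mathcal Y} \to {\mathcal X}$, the hypothesis that this morphism is representable forces $Y = X \times_{\mathcal X} {\mathcal Y}$ to be a scheme, and the projection $Y \to {\mathcal Y}$ then has the property that its pullback along any morphism $T \to {\mathcal Y}$ from a scheme is an ordinary $G$-torsor over $T$. In particular, $Y$ inherits a free $G$-action (transported from the $G$-action on $X$) for which the other projection $Y \to X$ is $G$-equivariant.

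I would then construct mutually quasi-inverse functors $\Psi \colon {\mathcal Y} \to [Y/G]$ and $\Phi \colon [Y/G] \to {\mathcal Y}$. For $\Psi$, an object of ${\mathcal Y}$ over a scheme $T$ is, by Proposition~\ref{prop.mapstocfg}, the same as a morphism $T \to {\mathcal Y}$; pulling back $Y \to {\mathcal Y}$ produces the torsor $E_T := T \times_{\mathcal Y} Y \to T$ together with its $G$-equivariant second projection $E_T \to Y$, giving an object of $[Y/G](T)$. The computation underlying the whole argument is the canonical identification
$$T \times_{\mathcal Y} Y \;=\; T \times_{\mathcal Y} (X \times_{\mathcal X} {\mathcal Y}) \;=\; T \times_{\mathcal X} X,$$
which matches the pulled-back torsor with the one attached to the composition $T \to {\mathcal Y} \to {\mathcal X}$. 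For $\Phi$, given $(E \to T,\, f \colon E \to Y)$ in $[Y/G]$, the composite $E \to Y \to {\mathcal Y}$ is $G$-invariant (up to canonical $2$-isomorphism) with respect to the free $G$-action on $E$, so by descent along the $G$-torsor $E \to T$ it produces a morphism $T \to {\mathcal Y}$, i.e., an object of ${\mathcal Y}(T)$.

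Verifying that $\Phi \circ \Psi$ and $\Psi \circ \Phi$ are naturally isomorphic to the identities is a routine unwinding: starting from $y \in {\mathcal Y}(T)$, the associated torsor $E_T$ has quotient $T$ and the map $E_T \to Y \to {\mathcal Y}$ recovers $y$; starting from $(E \to T,\, f)$, the pullback of $Y \to {\mathcal Y}$ along the descended morphism $T \to {\mathcal Y}$ is $E$ itself by the base-change identity above, and $f$ is recovered as the second projection. The main obstacle is the descent step in the construction of $\Phi$: to descend a morphism $E \to {\mathcal Y}$ along the $G$-torsor $E \to T$ one implicitly uses that ${\mathcal Y}$ behaves like a stack with respect to $G$-torsors. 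Since the only descent data invoked is encoded in the torsor $(E \to T)$ and the equivariant map $f$, this follows from the corresponding descent built into the definitions of $[X/G]$ and $[Y/G]$, and the remaining checks are formal.
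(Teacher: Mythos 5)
Your overall strategy mirrors the paper's: realize $Y \to {\mathcal Y}$ as the pullback of the universal torsor $X \to [X/G]$, use the key identification $T \times_{\mathcal Y} Y \cong T \times_{\mathcal X} X$, and build mutually quasi-inverse functors. The functor $\Psi \colon {\mathcal Y} \to [Y/G]$ is constructed exactly as in the paper and is fine. The gap is in $\Phi$. You propose to descend the composite $E \to Y \to {\mathcal Y}$ along the $G$-torsor $E \to T$, and you justify this by appealing to ``descent built into the definitions of $[X/G]$ and $[Y/G]$.'' But the descent you need is effectivity of descent for morphisms \emph{into} ${\mathcal Y}$, and ${\mathcal Y}$ is only assumed to be a CFG --- not a stack --- so descent along the smooth cover $E \to T$ is simply not available for it; the descent properties of the quotient CFGs $[X/G]$ and $[Y/G]$ say nothing about the target ${\mathcal Y}$. (Indeed, the proposition is stated in Section~\ref{sec.cfgs}, before stacks and the descent axiom are even introduced.)

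The paper closes exactly this gap by using the representability hypothesis a second time. Given $(E \to T,\, f\colon E \to Y)$, the composite $E \to Y \to X$ yields a morphism $T \to {\mathcal X}$, and representability makes $T' = T \times_{\mathcal X} {\mathcal Y}$ a \emph{scheme}; to lift $T \to {\mathcal X}$ to ${\mathcal Y}$ one then only needs a section of the scheme morphism $T' \to T$. Setting $E' = E \times_T T'$, one identifies $E' \cong E \times_X Y$, and since the map $E \to X$ factors through $f \colon E \to Y$, the projection $E' \to E$ acquires a $G$-equivariant section $E \to E'$. Because $T = E/G$ and $T' = E'/G$, this section of schemes descends to the required section $T \to T'$, and the composite $T \to T' \to {\mathcal Y}$ is the desired object of ${\mathcal Y}(T)$. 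In other words, the descent is performed entirely at the level of $G$-equivariant morphisms of schemes, where it is unproblematic, rather than at the level of the unknown CFG ${\mathcal Y}$. Your argument becomes correct once the appeal to descent for ${\mathcal Y}$ is replaced by this construction.
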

\begin{proof}(Sketch) Since ${\mathcal X} = [X/G]$ the fiber product
  $X \times_{{\mathcal X}} X$ is represented $G \times X$. Under this
  identification the two projection maps $X \times_{{\mathcal X}} X
  \to X$ correspond to the projection $p \colon G \times X \to X$ and
  the action morphism $ \sigma \colon G \times X \to X$, 
$(g,x) \mapsto gx$. If $Y$
  represents $X \times_{\mathcal X} {\mathcal Y}$ then the fiber
  product $Y \times_{\mathcal Y} Y$ is represented by $G \times Y$.
Under this identification, one of the projections $Y \times_{{\mathcal
   Y}} Y \to Y$ corresponds to the usual projection $G \times Y \to
    Y$ and the other gives an action map $G \times Y \to Y$ such that
    the map of schemes $Y \to X$ is $G$-equivariant.

    Given a morphism $T \to {\mathcal Y}$ the composition with the
    morphism of CFGs ${\mathcal Y} \to {\mathcal X}$ gives a map $T
    \to {\mathcal X}$. Thus we obtain a $G$-torsor $E \to T$ together
    with an equivariant map $E \to X$. The total space $E$ represents
    the fiber product $T \times_{\mathcal X} X$. Since the morphism $T
    \to {\mathcal X}$ factors through the morphism ${\mathcal Y} \to
    {\mathcal X}$, the fiber product $T \times_{\mathcal X} X$ is
    equivalent to the fiber product $T \times_{\mathcal Y} Y$. Thus we
    obtain a morphism $E \to Y$ which can be checked to be
    $G$-equivariant.  Hence an object of ${\mathcal Y}(T)$ produces an
    object of $[Y/G](T)$. Further analysis shows that we can extend
    this construction to define a functor ${\mathcal Y} \to [Y/G]$.

The construction of a functor $[Y/G] \to {\mathcal Y}$ is more subtle.
Given a $G$-torsor $E
\to T$ and a $G$-equivariant map to $E \to Y$ the composition $E \to Y
\to X$ is a $G$-equivariant map $E \to X$. Thus we obtain a morphism
$T \to {\mathcal X}=[X/G]$. We would like to lift this to a morphism
$T \to {\mathcal Y}$. Since ${\mathcal Y} \to {\mathcal X}$ is
representable the fiber product $T \times_{\mathcal X} {\mathcal Y}$
is represented by a scheme $T'$. By construction there are projections
$T' \to T$ and $T' \to {\mathcal Y}$, so to define a morphism $T \to
{\mathcal Y}$ it suffices to construct a section of the projection map
$T' \to T$. Let $E' = E \times_T T'$. Then we have a cartesian diagram
where the horizontal maps are  $G$-torsors
$$ \begin{array}{ccc}
E' & \to & T'\\
\downarrow & & \downarrow\\
E & \to & T
\end{array}
$$
Since $T' = T \times_{\mathcal X} {\mathcal Y}$ the fiber product
$E' = E \times_T T'$ can be identified
with the fiber product of schemes $E \times_X Y$. 
Moreover,  the map $E \to X$ factors through the $G$-equivariant 
map $Y \to X$ so the projection $E' \to E$ has a $G$-equivariant section $E \to E'$.
Hence there is an induced map of quotients $T \to T'$ which is a section to the map $T' \to T$. 
The composition $T \to T' \to {\mathcal Y}$ is our desired morphism.
\end{proof}

\section{Cohomology of CFGs and equivariant cohomology} \label{sec.cohcfg}
We now come to heart of this article - the definition of the integral
cohomology ring of a CFG. Although this definition is very formal,
the cohomology ring of the CFG $\Mbar_{g,n}$ naturally contains all
tautological classes. Moreover, if ${\mathcal X} = [X/G]$ is a quotient CFG
we will show that $H^*({\mathcal X}) = H^*_G(X)$ where $H^*_G(X)$
is the $G$-equivariant cohomology ring of $X$.

Throughout this section we assume that the ground scheme $S$ is the
spectrum of a field. When we work with cohomology we assume $S = \Spec
\C$ and the dimension of a variety is its complex dimension.

\subsection{Motivation and definition}
Since a CFG is not a space, we need an indirect method to define
cohomology classes on a CFG. To do this we start with a simple
observation. Since cohomology is a contravariant functor, a cohomology
class $c \in H^*(X)$ determines a pullback cohomology class $c(t) \in
H^*(T)$ for every map $T \stackrel{t} \to X$. (Here we can simply let $T$ and $X$
be topological spaces.)
Moreover, functoriality also ensures that the classes $c(t)$ satisfy an
obvious compatibility condition. Given morphisms $T' \stackrel{f} \to T
\stackrel{t}\to X$ then $c(ft) = f^*c(t)$.
With this motivation we make the following definition.
\begin{defi} \label{def.cfgcoh}
Let ${\mathcal X}$ be a CFG defined over ${\mathbb C}$. 
A cohomology class $c$ on ${\mathcal X}$
is the data of a cohomology class $c(t) \in H^*(T)$ for every scheme
$T$ and every
object $t$ of ${\mathcal X}(T)$. The classes $c(t)$ should satisfy the following
compatibility condition:
Given schemes $T'$ and $T$ and objects $t'$ in ${\mathcal X}(T')$,
$t$ in ${\mathcal X}(T)$ and a morphism $t' \to t$ whose image in $\sS$
is a morphism $f \colon T' \to T$ then 
$f^*c(t) = c(t') \in H^*(T')$.

The cup product on cohomology of spaces guarantees that the collection
of all cohomology classes on ${\mathcal X}$ forms a graded
skew-commutative ring. We denote
this ring by $H^*({\mathcal X})$.
\end{defi}

\subsubsection{Chow cohomology of CFGs}
Since many naturally occurring classes in the cohomology of $\Mgbar$
are algebraic we also define the Chow cohomology ring of a CFG 
defined over an arbitrary field.

\begin{defi} \cite[Definition 17.3]{Ful:84}
Let $X$ be a scheme. A Chow cohomology class $c$ of codimension $i$ 
is an assignment, for every map $T \stackrel{t} \to X$, a map on Chow groups
$c(t) \colon A_{*}(T) \to A_{*-i}(T)$ such that the $c(t)$ are
compatible with the usual operations in intersection theory (flat
pullback and lci pullback, proper pushforward, etc.). The group of Chow cohomology classes of codimension $i$ is denoted $A^i(X)$.

 If $\alpha \in
A_{k}(T)$
and $c \in A^i(X)$ then we use the notation $c \cap \alpha$ to denote
the image of $\alpha$ under the map $c(t) \colon A_k(T) \to
A_{k-i}(T)$.

Composition of maps makes the collection of operations of all
codimension into a graded ring which
we denote $A^*(X) := \oplus A^i(X)$. 
\end{defi}

\begin{remark}
If $X$ admits a resolution of
singularities then $A^*(X)$ is commutative \cite[Example
17.4.4]{Ful:84}. 
\end{remark}

The relationship between Chow cohomology and the usual Chow groups is
given by the following proposition.
\begin{prop}(Poincar\'e Duality \cite[Corollary 17.4]{Ful:84})
If $X$ is a smooth variety of dimension $n$
then the map 
$A^i(X) \to A_{n-i}(X)$, $c \mapsto c(id) \cap [X]$ is an
isomorphism. Moreover, the ring structure on $A^*(X)$ given by
composition of operations is compatible with the ring structure on
$A_*(X)$ given by intersection product.
\end{prop}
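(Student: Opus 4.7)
The plan is to construct an explicit inverse to the Poincar\'e Duality map. The crucial input is the smoothness of $X$: the diagonal $\Delta\colon X \to X \times X$ is then a regular embedding of codimension $n$, so for every morphism $t\colon T \to X$ the graph $\Gamma_t = (\mathrm{id}_T, t)\colon T \to T \times X$, obtained by base changing $\Delta$ along $t \times \mathrm{id}_X$, is itself a regular embedding of codimension $n$. This furnishes refined Gysin homomorphisms $\Gamma_t^!\colon A_k(T \times X) \to A_{k-n}(T)$ in the sense of Fulton \S 6.

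Given $\alpha \in A_{n-i}(X)$, I would define a candidate Chow cohomology class $c_\alpha$ by
\[
c_\alpha(t)(\beta) \;=\; \Gamma_t^!(\beta \times \alpha), \qquad t\colon T \to X,\ \beta \in A_k(T).
\]
The first step is to verify that this assignment lies in $A^i(X)$, i.e.\ commutes with proper pushforwards, flat pullbacks, and refined Gysin pullbacks. Each axiom reduces, by a diagram chase that uses the base change formula for $\Gamma_t$ in terms of $\Delta$, to a standard commutation property of refined Gysin maps (Fulton \S 6).

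Next I would check that $c \mapsto c \cap [X]$ and $\alpha \mapsto c_\alpha$ are mutually inverse. The intersection product on the smooth variety $X$ is $\mu \cdot \nu = \Delta^!(\mu \times \nu)$ with identity $[X]$, so
\[
c_\alpha(\mathrm{id}_X)([X]) \;=\; \Delta^!([X] \times \alpha) \;=\; [X]\cdot\alpha \;=\; \alpha.
\]
Conversely, given $c \in A^i(X)$ with $\alpha = c \cap [X]$, I must show $c_\alpha = c$. Let $p_T\colon T \times X \to T$ and $r\colon T \times X \to X$ be the projections. Since $p_T \circ \Gamma_t = \mathrm{id}_T$, the identity $\Gamma_t^!\,p_T^* = \mathrm{id}$ gives $\beta = \Gamma_t^!(\beta \times [X])$. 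Using that $t = r \circ \Gamma_t$ and the bivariant commutation $\Gamma_t^!\,c(r) = c(r \circ \Gamma_t)\,\Gamma_t^!$ (valid because $\Gamma_t$ is a regular embedding),
\[
c(t)(\beta) \;=\; c(r\circ\Gamma_t)\bigl(\Gamma_t^!(\beta \times [X])\bigr) \;=\; \Gamma_t^!\,c(r)(\beta \times [X]).
\]
The exterior-product compatibility for bivariant operations (Fulton \S 17) then yields $c(r)(\beta \times [X]) = \beta \times (c\cap[X]) = \beta \times \alpha$, and hence $c(t)(\beta) = \Gamma_t^!(\beta \times \alpha) = c_\alpha(t)(\beta)$.

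Finally, for the ring compatibility, if $c, c' \in A^*(X)$ have Poincar\'e duals $\alpha, \alpha'$, then
\[
(c \circ c') \cap [X] \;=\; c \cap (c'\cap [X]) \;=\; c \cap \alpha' \;=\; c_\alpha(\mathrm{id}_X)(\alpha') \;=\; \Delta^!(\alpha' \times \alpha) \;=\; \alpha' \cdot \alpha,
\]
which equals $\alpha \cdot \alpha'$ by commutativity of intersection on smooth $X$. The principal obstacle is the surjectivity step -- showing that an arbitrary bivariant class is recovered from its value on $[X]$. This requires simultaneously invoking the commutation of $c$ with refined Gysin pullbacks along graphs and the exterior-product compatibility; all the other verifications are formal consequences of the calculus of Fulton \S 6 and \S 17.
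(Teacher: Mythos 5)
Your argument is correct and is essentially the proof of \cite[Corollary 17.4]{Ful:84}, which the paper cites without reproving: the inverse $\alpha \mapsto c_\alpha$ via refined Gysin pullback along graphs, the identity $\Gamma_t^!\,p_T^* = \mathrm{id}$, and the exterior-product compatibility (itself a formal consequence of the proper-pushforward and flat-pullback axioms) are exactly the ingredients there. The one justification to tighten is why $\Gamma_t$ is a regular embedding of codimension $n$: base change of $\Delta$ along $t \times \mathrm{id}_X$ is not the reason (regular embeddings are not preserved by arbitrary base change, though the refined Gysin map $\Delta^!$ would still exist), but rather that $\Gamma_t$ is a section of the projection $T \times X \to T$, which is smooth of relative dimension $n$ because $X$ is smooth.
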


Given a map $f\colon Y \to X$ there is a natural pullback
$f^*\colon A^*(X) \to A^*(Y)$ given by $f^*c(T \stackrel{t}\to Y) =
c(ft)$. This functoriality allows us to define the Chow cohomology
ring of a CFG:
\begin{defi} \label{def.cfgchowcoh}
If ${\mathcal X}$ is a CFG, then a Chow cohomology class
is an assignment for every scheme $T$ and every object $t$ of
${\mathcal X}(T)$ a Chow cohomology
$c(t) \in A^*(T)$ satisfying the same compatibility conditions as 
in Definition \ref{def.cfgcoh}.
\end{defi}

\begin{remark}
If $p \colon {\mathcal Y} \to {\mathcal X}$ is a map of CFGs over
$\sS$ then
there is a pullback homomorphism  $p^*\colon H^*({\mathcal X}) \to
H^*({\mathcal Y})$. If $c \in H^*({\mathcal X})$ and $t$ is an object 
of ${\mathcal Y}(T)$ then 
$p^*c(t) = c(p \circ t))$. In fancier language, $H^*$ is a functor from
the 2-category of CFGs to the category of skew-commutative
rings. Similar functoriality holds for the Chow cohomology groups.
\end{remark}

To connect our theory with the usual cohomology theory schemes over
$\C$
we have:
\begin{prop}
If ${\mathcal X} = \underline{X}$ with $X$ a scheme then
$H^*({\mathcal X}) = H^*(X)$. Likewise $A^*({\mathcal X}) = A^*(X)$.
\end{prop}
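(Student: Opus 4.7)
The plan is to set up the obvious maps in both directions and check they are mutually inverse, using that $\underline{X}$ has the canonical ``tautological'' object $\mathrm{id}_X \in \underline{X}(X)$.

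First I would define a map $\Phi \colon H^*(X) \to H^*(\underline{X})$. Given $c \in H^*(X)$ in the usual sense and any object $t$ of $\underline{X}(T)$ (which is, by definition of $\underline{X}$, just a morphism of schemes $t \colon T \to X$), set $\Phi(c)(t) := t^* c \in H^*(T)$. The compatibility condition of Definition \ref{def.cfgcoh} is automatic: a morphism from $t' \colon T' \to X$ to $t \colon T \to X$ in $\underline{X}$ covering $f \colon T' \to T$ is exactly a commutative triangle $t' = t \circ f$, so $\Phi(c)(t') = (t\circ f)^* c = f^* t^* c = f^* \Phi(c)(t)$ by functoriality of ordinary cohomology. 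Compatibility with cup products is immediate since $t^*$ is a ring homomorphism.

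In the reverse direction, define $\Psi \colon H^*(\underline{X}) \to H^*(X)$ by evaluating on the tautological object: $\Psi(c) := c(\mathrm{id}_X) \in H^*(X)$. This is clearly a ring homomorphism since the product on $H^*(\underline{X})$ is defined object-by-object.

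Next I would verify $\Psi \circ \Phi = \mathrm{id}$ and $\Phi \circ \Psi = \mathrm{id}$. The first is trivial: $\Psi(\Phi(c)) = \Phi(c)(\mathrm{id}_X) = \mathrm{id}_X^* c = c$. For the second, let $c \in H^*(\underline{X})$ and let $t \colon T \to X$ be an object of $\underline{X}(T)$. I need $\Phi(\Psi(c))(t) = c(t)$, i.e., $t^* c(\mathrm{id}_X) = c(t)$. But $t$ itself is a morphism in $\underline{X}$ from the object $t \in \underline{X}(T)$ to the object $\mathrm{id}_X \in \underline{X}(X)$, and its image in $\sS$ is precisely $t \colon T \to X$. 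The compatibility condition built into Definition \ref{def.cfgcoh} then gives $c(t) = t^* c(\mathrm{id}_X)$, which is exactly what is required. Hence $\Phi$ and $\Psi$ are inverse ring isomorphisms.

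The identical argument works verbatim for Chow cohomology using Definition \ref{def.cfgchowcoh}: the map $\Phi$ sends $c \in A^*(X)$ to the CFG class whose value on $t \colon T \to X$ is the pullback operation $t^* c$, and $\Psi$ evaluates on $\mathrm{id}_X$. There is no genuine obstacle here; the whole point is that Definition \ref{def.cfgcoh} was designed precisely so that, for representable CFGs, the compatibility condition forces the entire class to be determined by its value on the universal object $\mathrm{id}_X$. The only thing to verify with mild care is that $\Phi$ lands in actual CFG cohomology/Chow cohomology classes, which is the functoriality check above.
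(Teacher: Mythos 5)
Your proposal is correct and follows exactly the paper's argument: pull back along each $t\colon T\to X$ to go from $H^*(X)$ to $H^*(\underline{X})$, and evaluate at $\mathrm{id}_X$ to go back. You simply spell out the mutual-inverse and compatibility checks that the paper leaves implicit.
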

\begin{proof}
Given a class $c \in H^*(X)$ we define a corresponding class in
$H^*({\mathcal X})$ by $c(t) = t^*c$ for every morphism $T
\stackrel{t} \to X$. Conversely, given a class $c \in H^*({\mathcal  X})$
then $c(id)$ defines a class in $H^*(X)$.
\end{proof}

\begin{example} \label{ex.quotchern}
If ${\mathcal X} = [X/G]$ is a quotient CFG and $V$ is a $G$-equivariant
vector bundle on $X$ then $V$ defines Chern classes $c_i(V) \in
H^{2i}({\mathcal X})$ (and $A^{i}({\mathcal X})$) as follows:
Suppose $ T \stackrel{t} \to {\mathcal X}$ corresponds to a torsor
$E \stackrel{\pi} \to T$ and equivariant map $E \stackrel{f} \to X$. Since $\pi \colon E \to T$ is
a $G$-torsor the pullback $\pi^*$ induces an equivalence of categories
between vector bundles on $T$ and
$G$-equivariant vector bundles on $E$.
Define $c_i(V)(t)$ to be the $i$-th Chern class of
the vector bundle on $T$ corresponding to the $G$-equivariant vector bundle
$f^*V$ on $E$.
\end{example}

\subsubsection{Tautological classes and boundary classes}
The most familiar classes on the moduli space of curves are naturally defined
as elements of the cohomology or Chow cohomology ring of the CFGs
$\Mg$, $\Mgbar$, etc.
Precisely we have:

\begin{construction}($\lambda$ and $\kappa$ classes)
The assignment which assigns to any family of stable curves $\pi \colon
X \to T$ corresponding to an object $t$ in $\Mbar_{g}(T)$
the class $\lambda_i(t) = c_i(\pi_*(\omega_{X/T}))$ and $
\kappa_i(t) = \pi_*(c_1(\omega_{X/T})^{i+1})$ define classes $\omega_i$
and $\kappa_i$ in 
$A^i(\Mgbar)$ and $H^{2i}(\Mgbar)$.

Functoriality of $H^*$ implies that these classes pullback to classes
in
$H^*(\M_{g,n})$ and $\Mbar_{g,n}$.
\end{construction}

Similarly on $\Mbar_{g,n}$ we have $\psi$-classes:
\begin{construction} ($\psi$ classes)
The assignment to any family of stable curves $\pi \colon X \to T$ 
with sections $\sigma_1, \ldots ,  \sigma_n$ corresponding to an object
$t$ in $\Mbar_{g,n}(T)$
the classes $\psi_i(t) = c_1(\sigma_i^*\omega_{X/T})$ define
classes $\psi_i$ in $H^{2}(\Mbar_{g,n})$ and $A^1(\Mbar_{g,n})$.
\end{construction}

\begin{construction}
(boundary classes) 
The deformation theory of curves implies that
the locus in $\Hbar_g$ parametrizing tri-canonically embedded curves
with nodes is a normal crossing divisor \cite[Corollary 1.9]{DeMu:69}.
Since $\Hbar_g$ is
non-singular this is a Cartier divisor. Let $\Delta_0(\Hbar_g)$
be the divisor parametrizing irreducible nodal curves, and for $1\leq
i \leq [g/2]$ let
$\Delta_i(\Hbar_g)$
the divisor parametrizing curves that are the union of a component of
genus $i$ and one of genus $g-i$. Each of these divisors is
$G$-invariant and thus defines a $G$-equivariant line bundle
$L(\Delta_i)$
on $\Hbar_g$. Applying the construction of Example \ref{ex.quotchern} yields
boundary classes $\delta_i = c_1(L(\Delta_i)) $ in $H^{2}(\Mgbar)$
and
$A^1(\Mgbar)$.
\end{construction}

\begin{example}
The $\lambda_i$ classes can be also be defined via a vector
bundle on $\Hbar_g$ as was the case for the $\delta_i$. Let
$p_H\colon \Zbar_{g} \to \Hbar_{g}$ be the universal family of tri-canonically
embedded stable curves. Then $(p_H)_* (\omega_{\Zbar_g/\Hbar_g})$ is a
$\PGL_{5g-5}$-equivariant vector bundle $\Hbar_g$ and $\lambda_{i}$ is
the $i$-th Chern class of this bundle as defined in Example
\ref{ex.quotchern}.

We will see in the next section that every class in $H^*(\Mgbar)$
(resp. $A^*(\Mgbar)$) may be defined in terms of equivariant
cohomology (resp Chow) classes on $\Hbar_g$.
\end{example}

\subsection{Quotient CFGs and equivariant cohomology} \label{subsec.equivcoh}
The goal of this section is to show the cohomology ring
of a CFG ${\mathcal X} = [X/G]$ is equal to the equivariant cohomology
(resp. Chow) ring of $X$. We begin by recalling the definition of
equivariant cohomology.

\subsubsection{Equivariant cohomology}
Equivariant cohomology was classically defined using the {\em Borel
  construction}.
\begin{defi} \label{def.borelconstruction}
Let $G$ be a topological group and 
let $EG$ be a contractible space on which $G$ acts freely. If $X$ is a
$G$-space then we define the equivariant cohomology ring $H^*_G(X)$ to be
the cohomology of the quotient space $X \times_G EG$ where $G$ acts on
$X \times EG$ by the rule $g(x,v) = (gx,gv)$.
\end{defi}
\begin{remark}
The definition is independent of the choice of the contractible space
$EG$. If $G$ acts freely on $X$ with quotient $X/G$ then there is an
isomorphism
$X \times_G EG \to X/G \times EG$. Since $EG$ is contractible
it follows that $H^*_G(X) = H^*(X/G)$. At the other extreme, if
$G$ acts trivially on $X$ then $X \times_G EG = X \times BG$ where $BG
= EG/G$ and $H^*_G(X) = H^*(X) \otimes H^*(BG)$. Note that because
$EG$ and $BG$ are infinite dimensional, $H^k_G(X)$ can be non-zero for
$k$ arbitrarily large.
\end{remark}

\begin{example}(The localization theorem)
If $G = \C^*$  we may take $EG$ to be the limit as
$n$ goes to infinity of the spaces $\C^n \smallsetminus \{0\}$ with
usual free action of $\C^*$. The
quotient is the topological space $\C\Pro^\infty$ so $H_G^*(pt) =
H_G^*(BG)
= \Z[t]$ where $t$ corresponds to $c_1({\mathcal O}(1))$.
Pullback along the projection to a point implies that for any  $\C^*$
space $X$ 
the equivariant cohomology 
$H^*_{\C^*}(X)$ is a $\Z[t]$-algebra. Let $X^T$ be the fixed locus for the
$T$-action. The inclusion of the fixed locus $X^T \to X$ induces a map
in equivariant cohomology $H^*_{\C^*}(X) \to H^*_{\C^*}(X^T)= H^*(X^T) \otimes
\Z[t]$. The {\em localization theorem} states
that this map is an isomorphism after inverting the multiplicative set
of homogenous elements in $\Z[t]$ of positive degree. The localization
theorem is very powerful because it reduces certain calculations on $X$ to calculations on
the fixed locus $X^T$. In many situations the fixed locus of a space
is quite simple - for example a finite number of points.
In \cite{Kon:95} Kontsevich developed the theory of stable maps and used the
localization theorem (and its corollary, the Bott residue formula) 
on the moduli space of stable maps
to give recursive formulas for the number of
rational curves of a given degree on a general quintic hypersurface in $\Pro^4$.
Subsequently, Graber and Pandharipande \cite{GrPa:99}
proved a virtual localization formula for equivariant Chow classes.
Their formula has been one of the
primary tools in algebraic Gromov-Witten theory. For an introduction
to this subject in algebraic geometry see \cite{Bri:98, EdGr:98a}.
\end{example}

The relationship between equivariant cohomology and the cohomology of
a quotient CFG is given by the following theorem.
\begin{thm} \label{thm.cfgequivcoh}
If ${\mathcal X} = [X/G]$ is a quotient CFG where $X$ is a 
complex variety and $G$ an algebraic group then $H^*({\mathcal X}) =
H^*_G(X)$.
\end{thm}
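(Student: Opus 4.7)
The plan is to compare both sides to the cohomology of algebraic approximations to the Borel construction, following the approach of Totaro and Edidin--Graham. The essential idea is that $[X/G]$ carries canonical ``test objects'' whose evaluation cohomology literally is equivariant cohomology (in a large range of degrees), and any functorial assignment of cohomology classes in the sense of Definition \ref{def.cfgcoh} is determined by, and can be reconstructed from, its values on these test objects.

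More precisely, first fix an integer $k$ and choose a representation $V$ of $G$ together with a $G$-invariant open subset $U \subset V$ on which $G$ acts freely and such that $\codim(V \smallsetminus U) > k$. Then $U/G$ exists as a smooth scheme, and the diagonal quotient $X_G := (X \times U)/G$ also exists (as a scheme when $X$ has an equivariant ample line bundle, as an algebraic space in general). Standard topological arguments comparing the fiber bundle $X_G \to U/G$ with the universal $X \times_G EG \to BG$ show that $H^i(X_G) = H^i_G(X)$ for $i \leq k$, and that these isomorphisms are compatible as $U$ ranges over an exhausting sequence of approximations.

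Next I construct the map $H^*([X/G]) \to H^*_G(X)$. The projection $X \times U \to X_G$ is a $G$-torsor and the projection $X \times U \to X$ is $G$-equivariant, so together they define a canonical object $\xi_U \in [X/G](X_G)$. For $c \in H^*([X/G])$, set the image of $c$ to be $c(\xi_U) \in H^*(X_G) \cong H^*_G(X)$ in degrees $\leq k$. The compatibility condition in Definition \ref{def.cfgcoh}, applied to the morphism $\xi_U \to \xi_{U'}$ induced by an inclusion $U \hookrightarrow U'$, guarantees that these classes assemble into a well-defined element of $H^*_G(X)$. Conversely, given $\alpha \in H^*_G(X)$ realized as $\alpha_U \in H^*(X_G)$ for $U$ large enough, and any object $t = (E \to T, f\colon E \to X)$ of $[X/G](T)$, form the associated bundle $(E \times U)/G \to T$, whose fiber is $U$ and is therefore highly connected, inducing $H^i(T) \cong H^i((E \times U)/G)$ for $i \leq k$. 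The equivariant map $f \times \mathrm{id}_U$ descends to $g_t \colon (E \times U)/G \to X_G$, and I define $c_\alpha(t)$ to be the class in $H^*(T)$ corresponding to $g_t^* \alpha_U$ under the fiber-bundle isomorphism. The required compatibility $h^* c_\alpha(t) = c_\alpha(t')$ for a morphism $t' \to t$ with underlying map $h\colon T' \to T$ follows from the functoriality of the Borel construction under pullback of torsors.

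Finally, verify that the two assignments are mutually inverse. One direction is essentially tautological: applied to the canonical object $\xi_U$, the second construction recovers $\alpha_U$ because $g_{\xi_U}$ is (up to the bundle identification) the identity. For the other direction, note that any class $c \in H^*([X/G])$ is uniquely determined by its values on the $\xi_U$, since for any object $t$ the map $g_t\colon (E \times U)/G \to X_G$ together with its lift to torsors realizes $t$ (fiber-bundle-pulled-back to $(E \times U)/G$) as a pullback of $\xi_U$; the compatibility axiom then forces $c(t)$ to equal $g_t^* c(\xi_U)$ in the relevant range. The main obstacle is bookkeeping the cohomological degree: because a single finite-dimensional $U$ only computes $H^*_G$ up to some degree $k$, one must work with a compatible cofinal system of approximations and check that every construction stabilizes correctly as $k \to \infty$. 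Together with the verification of naturality of the Borel-type construction under morphisms in $[X/G]$, this is the essential technical content of the proof.
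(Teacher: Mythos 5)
Your proposal is correct and follows essentially the same route as the paper: approximate $BG$ by $U/G$ \`a la Totaro, use the canonical object given by the torsor $X\times U \to (X\times U)/G$ (your $\xi_U$, the paper's $u$) to define the map, and recover $c(t)$ for an arbitrary object $t=(E\to T, E\to X)$ from the isomorphism $H^k(T)\cong H^k(E\times_G U)$ in degrees $k$ below the codimension bound. The only cosmetic difference is that you phrase the argument as constructing two mutually inverse maps, while the paper proves injectivity and surjectivity of the single pullback $H^k({\mathcal X})\to H^k(X\times_G U)$; the content is identical.
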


\subsubsection{Proof of theorem Theorem \ref{thm.cfgequivcoh}}
Unfortunately even if $G$ is an algebraic group, the topological spaces $EG$ used in the construction of
equivariant cohomology are not algebraic varieties. As a result
the $G$-torsor $X \times EG \to X \times_G EG$ is not an object in the
category ${\mathcal X}$, so it is difficult to directly compare
$H^*_G(X) := H^*(X \times_G EG)$ and $H^*({\mathcal X})$.

However Totaro
\cite{Tot:99} observed that $EG$ and $BG$ can be approximated by algebraic varieties.
Precisely,
if $V$ is a complex representation of $G$
containing an open set $U$ on which $G$ acts freely and $\codim V
\smallsetminus U >
i$ then $U/G$ approximates $BG$ up to real codimension $2i$; i.e., $H^{k}(U/G) =
H^k(BG)$ for $k \leq 2i$. Note that for every $i\geq 0$ there exists a complex representation $V$ of $G$ such that  $\codim V
\smallsetminus U >
i$. The reason is that $G$ may be embedded in $\GL_n$ for some $n$ and 
such representations are easily constructed for $\GL_n$.

\begin{prop} \label{prop.totaroapprox}
If $G, U, V$ are as in the preceding paragraph then $H^k(X \times_G U)
= H^k_G(X)$ for $k \leq 2i$.
\end{prop}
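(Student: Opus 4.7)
The plan is to compare both sides of the asserted isomorphism with a single intermediate space. Let $G$ act diagonally on $EG \times U$; since the action is free on each factor, the diagonal action is free. Set $M := (X \times EG \times U)/G$ with $G$ acting diagonally. The two $G$-equivariant projections $EG \times U \to EG$ and $EG \times U \to U$ induce fiber bundles
\[
p_1 \colon M \to X \times_G EG, \qquad p_2 \colon M \to X \times_G U,
\]
with fibers $U$ and $EG$ respectively. Since $EG$ is contractible, $p_2$ is a weak homotopy equivalence and $p_2^*$ is an isomorphism in every degree.

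The heart of the argument is a connectivity estimate for $U$. Because $V$ is a complex vector space it is contractible, and $V \smallsetminus U$ is a closed subvariety of complex codimension strictly greater than $i$, hence real codimension at least $2i + 2$. A standard transversality/general position argument then shows that any continuous map $(D^j, S^{j-1}) \to (V, U)$ with $j \leq 2i + 1$ can be homotoped, rel boundary, so as to land in $U$, giving $\pi_j(V, U) = 0$ for $j \leq 2i + 1$. The long exact sequence of the pair, combined with the vanishing of $\pi_\bullet(V)$, yields $\pi_j(U) = 0$ for $j \leq 2i$; in particular $H^q(U) = 0$ for $0 < q \leq 2i$.

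With $U$ being $2i$-connected, the Serre spectral sequence of the fibration $U \to M \to X \times_G EG$ has $E_2^{p,q} = H^p(X \times_G EG;\, H^q(U))$ concentrated on the row $q = 0$ in total degrees $\leq 2i$. Hence $p_1^*$ is an isomorphism on $H^k$ for $k \leq 2i$. Combining with the weak equivalence $p_2$ gives
\[
H^k_G(X) := H^k(X \times_G EG) \xrightarrow{\;p_1^*\;} H^k(M) \xleftarrow{\;p_2^*\;} H^k(X \times_G U),
\]
with both arrows isomorphisms for $k \leq 2i$, which is the claim.

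The main obstacle is the connectivity estimate when $V \smallsetminus U$ is singular: transversality for continuous maps avoiding a possibly singular real codimension $\geq 2i+2$ subset requires either a Whitney stratification of $V \smallsetminus U$ with induction on strata dimension, or smooth approximation of $D^j \to V$ followed by general position on each stratum. In either case the codimension hypothesis is exactly what is needed to push all obstructions past dimension $2i+1$, after which the Serre spectral sequence argument is routine.
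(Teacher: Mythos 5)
Your proof is correct and follows essentially the same route as the paper: both compare $H^*(X\times_G U)$ and $H^*(X\times_G EG)$ through the intermediate quotient $(X\times EG\times U)/G$, using that one projection is an $EG$-fibration (an isomorphism on cohomology in all degrees) and the other a $U$-fibration (an isomorphism in degrees $\leq 2i$ by the connectivity of $U$). Your added justification of the connectivity estimate via general position, and of the fibration comparison via the Serre spectral sequence, simply fills in details the paper asserts without proof.
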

\begin{proof}
  First observe that if $k\leq 2i+1$ then $\pi_k(U) = 0$ since $U$ is
  the complement of a subspace of real codimension at least $2i+2$ in
  the contractible space $V$. Hence $H^k(U) = 0$ for $k \leq 2i$. In
  particular, $H^k(U) = H^k(EG) =0$ for $k \leq 2i$. We claim that as a
  consequence the quotients $X \times_G U$ and $X\times_G EG$ have the
  same cohomology up to degree $2i$. To see this consider the quotient
  $Z= X \times_G (U \times EG)$. There is a projection $p_1 \colon Z
  \to X \times_G U$ which is an $EG$-fibration and a projection $p_2
  \colon Z \to X \times_G EG$ which is a $U$-fibration. Since $H^k(EG)
  = 0$ for all $k$, $p_1^*$ is an isomorphism and since $H^k(U) = 0$
  for $k \leq 2i$, $p_2^*$ is an isomorphism in degree $k \leq
  2i$. Hence $H^k(X \times_G U)$ is isomorphic to $H^k(X \times_G EG)$
  for $k \leq 2i$ as claimed.
\end{proof}

When $X$ is a scheme then, with mild assumptions on $X$ or $G$, we may
always find $U$ such that the quotient $X \times_G U$
is also a scheme\footnote{In particular if $X$ is quasi-projective or
$G$ is connected \cite[Proposition 23]{EdGr:98}.}.

In these cases our theorem now follows from the following proposition.
\begin{prop}
With $G, U,V$ as above 
if ${\mathcal X} = [X/G]$ then the pullback map
$H^k({\mathcal X}) \to H^k(X \times_G U)$ is an isomorphism for $k
\leq 2i$.
\end{prop}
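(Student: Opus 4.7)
The plan is to compare the whole functor of points with the single object $t_0 \in [X/G](T_0)$ given by the $G$-torsor $X \times U \to T_0 := X \times_G U$ together with the first projection $X \times U \to X$. By Definition~\ref{def.cfgcoh}, evaluation on $t_0$ is exactly the pullback map $H^k({\mathcal X}) \to H^k(T_0)$, so the task is to show that $c \mapsto c(t_0)$ is a bijection in degrees $k \leq 2i$.

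The heart of the argument is an auxiliary construction. For an arbitrary object $t = (E \to T,\, f\colon E \to X)$ in ${\mathcal X}(T)$, the diagonal $G$-action on $E \times U$ is free, and $\tilde{t} := (E \times U \to E \times_G U,\; f \circ \mathrm{pr}_1)$ defines an object of ${\mathcal X}(E \times_G U)$. It fits into two natural morphisms: the projection $\psi\colon E \times_G U \to T$, with $\psi^* t = \tilde{t}$, and the map $\phi\colon E \times_G U \to T_0$ induced by $(e,u) \mapsto (f(e),u)$, with $\phi^* t_0 = \tilde{t}$. The compatibility condition on a cohomology class $c$ therefore forces
$$\psi^* c(t) \;=\; c(\tilde{t}) \;=\; \phi^* c(t_0).$$

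The key technical input is that $\psi\colon E \times_G U \to T$ is a fiber bundle with fiber $U$, and by the argument in the proof of Proposition~\ref{prop.totaroapprox} we have $H^q(U) = 0$ for $0 < q \leq 2i$ (and $U$ is simply connected, so the local system is trivial). The Leray--Serre spectral sequence $E_2^{p,q} = H^p(T; H^q(U)) \Rightarrow H^{p+q}(E \times_G U)$ then collapses in total degree $\leq 2i$ to the row $q = 0$, showing that $\psi^*\colon H^k(T) \to H^k(E \times_G U)$ is an isomorphism for $k \leq 2i$. This is the step I expect to carry the most analytic weight, and it is what makes the whole approximation scheme work.

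Granting this, injectivity of $c \mapsto c(t_0)$ is immediate: if $c(t_0) = 0$ then $\psi^* c(t) = \phi^* c(t_0) = 0$, and then $c(t) = 0$ for all $t$ since $\psi^*$ is injective in the relevant range. For surjectivity, given $c_0 \in H^k(T_0)$ with $k \leq 2i$, one defines $c(t) := (\psi^*)^{-1} \phi^* c_0$, which is well-defined because $\psi^*$ is an isomorphism. The main obstacle I anticipate is verifying that this assignment is compatible with morphisms $T' \to T$ in the sense of Definition~\ref{def.cfgcoh}: one must check that a morphism $t' \to t$ over $T' \to T$ induces a map $E' \times_G U \to E \times_G U$ commuting with both $\psi$ and $\phi$, so that naturality of pullback forces $f^* c(t) = c(t')$. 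This is a routine but careful diagram chase, and it completes the proof.
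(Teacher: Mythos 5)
Your proposal is correct and follows essentially the same route as the paper: both compare an arbitrary object $t$ over $T$ with the distinguished object over $X\times_G U$ by passing to $E\times_G U$, using that $E\times_G U\to T$ is a $U$-fibration and $H^q(U)=0$ for $0<q\leq 2i$ to conclude that $\psi^*$ is an isomorphism in degrees $\leq 2i$. You merely spell out the details (the Leray--Serre argument and the naturality check for surjectivity) that the paper leaves implicit.
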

\begin{proof}
Let $u \colon X \times_G U \to {\mathcal X}$ be the map
associated to the torsor $X \times U \to X \times_G U$ and equivariant
projection map $X \times U \to X$. If $c \in H^{k}({\mathcal X})$ then
the image of $c$ under the pullback map is the cohomology class 
$c(u) \in H^k(X \times_G U)$.
Assume that 
$c(u)=0$. We wish to show that $c(t) = 0$ for every object $t$
corresponding to a torsor $E \to T$ and equivariant map $E \to
X$. Given such a torsor consider the cartesian diagram of
torsors.
$$\begin{array}{ccc}
E \times U  & \to & E\\
\downarrow & & \downarrow\\
E \times_G U & \to & T
\end{array}
$$
Because the fiber of the map $E \times_G U \to E$ is $U$
the map on quotients $E \times_G U \to T$ is also a $U$ fibration. Since 
$H^k(U) = 0$ for
$k \leq 2i$, the pullback map $H^k(T) \to H^k(E
\times_G U)$ is an isomorphism when $k \leq 2i$. By definition of $c$ as an element of
$H^k({\mathcal X})$, $c(t)$ and $c(u)$ have equal
pullbacks in $H^k(E \times_G U)$. Since $c(u)=0$ 
it follows that $c(t)=0$ as well. Since $t$ was arbitrary we see
that $c=0$, so the map is injective.

The fact that the map $H^k(T) \to H^k(E \times_G U)$ is an isomorphism also implies that
our map is surjective. Given a class $c(u) \in H^k(X \times_G U)$ we
let
$c(t)$ be the inverse image in $H^k(T)$  of the pullback of $c(u)$ to
$H^k(E \times_G U)$.
\end{proof}

\subsubsection{Equivariant homology}
Totaro's approximation of $EG$ with open sets in representations
allows one to define a corresponding equivariant homology theory which
is dual to equivariant cohomology when $X$ is smooth.
Let $G$ be a $g$-dimensional group and let $V$ be an
$l$-dimensional representation of $G$ containing an open set $U$ on
which $G$ acts freely and whose complement has codimension greater
than $\dim X - i$.
\begin{defi}[Equivariant homology] \label{def.equivhom}
Let $X$ be a $G$-space defined over $\C$ and define $H_i^G(X) = H_{i+l-g}^{BM}(X
\times_G U)$, where $H_*^{BM}$ indicates Borel-Moore homology
(see \cite[Section 19.1]{Ful:84} for the definition of Borel-Moore homology).
\end{defi}

\begin{remark} Again this definition is independent of the choice of
  $U$ and $V$ as long as the codimension of $V \smallsetminus U$ is
  sufficiently high. The reason we use Borel-Moore homology theory
  rather than singular homology is that on a non-compact manifold it
  is the theory naturally dual to singular cohomology. In equivariant
  theory, even if $X$ is compact, the quotients $X \times_G U$ will in
  general not be compact. On compact spaces Borel-Moore homology
  coincides with the usual singular homology. Note that our grading
  convention means that $H_k^G(X)$ can be non-zero for arbitrarily
  negative $k$, but $H_k^G(X) = 0$ for $k \geq 2 \dim X$.
\end{remark}

\begin{thm}[Equivariant Poincar\'e duality]\label{thm.equivpd}
Let $X$ be smooth $n$-dimensional $G$-space defined over $\C$. The
map
$H^k_G(X) \to H_{2n-k}^G(X)$, $c \mapsto c \cap [X]_G$ is an
isomorphism. 
\end{thm}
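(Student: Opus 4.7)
The plan is to reduce the equivariant statement to ordinary Poincaré duality applied to one of Totaro's approximations $X \times_G U$, exactly as in the proof of Theorem \ref{thm.cfgequivcoh}.

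Fix a degree $k$. Since $X$ is an $n$-dimensional smooth complex variety, I want to exhibit $H^k_G(X)$ and $H_{2n-k}^G(X)$ simultaneously via a single mixed space. Choose a $G$-representation $V$ of dimension $l$ containing an open subset $U$ on which $G$ acts freely, such that $\codim_V(V \smallsetminus U)$ is large compared to both $k$ and $\dim X - (2n-k) = k - n$; such $V$ exists by the construction recalled before Proposition \ref{prop.totaroapprox} (embed $G$ into some $\GL_m$ and take a sum of copies of the standard representation). Since $U$ is smooth and $G$ acts freely on $X \times U$ with quotient a scheme (under the mild hypotheses noted in the footnote to Theorem \ref{thm.cfgequivcoh}), the mixed space $Y := X \times_G U$ is a smooth complex variety of dimension $n + l - g$, where $g = \dim G$.

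Next I apply ordinary Poincaré duality on the smooth variety $Y$: cap product with $[Y]$ gives an isomorphism
\[
H^k(Y) \xrightarrow{\;\cap [Y]\;} H_{2(n+l-g)-k}^{BM}(Y).
\]
By Proposition \ref{prop.totaroapprox} the left side is canonically identified with $H^k_G(X)$ (for our chosen $k$ and $V$), and by Definition \ref{def.equivhom} the right side is, after the shift $2(n+l-g)-k = (2n-k) + l - g$, canonically identified with $H_{2n-k}^G(X)$. Composing these identifications yields the desired isomorphism $H^k_G(X) \to H_{2n-k}^G(X)$ in degree $k$.

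What remains is to check that this isomorphism is indeed cap product with the equivariant fundamental class $[X]_G$, and that it is independent of the choice of $(V,U)$. For the first point, $[X]_G$ is by Definition \ref{def.equivhom} represented by the ordinary fundamental class $[Y] \in H_{2(n+l-g)}^{BM}(Y)$, and the cap product is defined on approximations through the same mixed space, so the diagram commutes on the nose. For independence of the choice of $(V,U)$, compare two choices $(V_1,U_1)$ and $(V_2,U_2)$ through the double mixed space $X \times_G (U_1 \times U_2)$, which is smooth, maps to each $X \times_G U_j$ by a bundle projection with acyclic (in the relevant range) fiber, and whose fundamental class pulls back to each $[X \times_G U_j]$; this is the standard double-fibration argument already used in the proof of Proposition \ref{prop.totaroapprox}. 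The only real subtlety, and the step I would want to be most careful with, is verifying this compatibility simultaneously for cohomology, Borel--Moore homology, and the cap product, since Borel--Moore homology is covariant for proper maps but contravariant only for open inclusions or smooth maps with oriented fibers; however, both projections from $X \times_G (U_1 \times U_2)$ are smooth with complex vector bundle fibers, so the required pullbacks are available and the fundamental class is preserved up to the correct dimension shift.
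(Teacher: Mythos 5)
The paper states this theorem without proof (it is imported from the framework of \cite{EdGr:98}), and your reduction to ordinary Poincar\'e duality on a single mixed space $Y=X\times_G U$, followed by the double-fibration argument over $X\times_G(U_1\times U_2)$ for independence of $(V,U)$, is exactly the intended argument. The strategy is sound and all the ingredients you invoke (smoothness of $Y$, Poincar\'e duality between cohomology and Borel--Moore homology on a smooth non-compact variety, Proposition \ref{prop.totaroapprox}) are available.

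There is, however, one concrete slip in the degree bookkeeping, worth flagging because it traces back to an inconsistency in Definition \ref{def.equivhom} itself. You assert $2(n+l-g)-k=(2n-k)+l-g$; this is false unless $l=g$, since the left side equals $2n+2l-2g-k$. Ordinary Poincar\'e duality on the smooth variety $Y$ of complex dimension $n+l-g$ lands in $H^{BM}_{2(n+l-g)-k}(Y)$, whereas Definition \ref{def.equivhom} as printed would identify $H^G_{2n-k}(X)$ with $H^{BM}_{2n-k+l-g}(Y)$, and these do not match. The resolution is that the shift in Definition \ref{def.equivhom} should be by the \emph{real} codimension $2(l-g)$ rather than $l-g$ (the printed shift is the one appropriate for the Chow-group analogue of Definition \ref{def.equivchow}, where dimensions are complex); with $H^G_i(X)=H^{BM}_{i+2(l-g)}(X\times_G U)$ the class $[X]_G$ lies in $H^G_{2n}(X)$ as it must, the cycle map $A^G_k(X)\to H^G_{2k}(X)$ is degree-correct, and your identification becomes the true identity $2(n+l-g)-k=(2n-k)+2(l-g)$. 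Once that is repaired the rest of your argument goes through: $[X]_G$ is represented by $[Y]$, and both projections from $X\times_G(U_1\times U_2)$ are smooth maps whose fibers are open subsets of vector spaces with complement of large codimension, so pullback on Borel--Moore homology is an isomorphism in the relevant range compatibly with the cap product, exactly as in the proof of Proposition \ref{prop.totaroapprox}.
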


\subsection{Equivariant Chow groups} \label{subsec.equivchow}
When we work over an arbitrary field Totaro's approximation of $EG$
lets us define equivariant Chow groups. The theory of equivariant Chow groups
was developed in the paper \cite{EdGr:98}.

\begin{defi} \label{def.equivchow} \cite{EdGr:98}
Let $X$ be $G$-scheme defined over an arbitrary field. Then with the
notation as in Definition \ref{def.equivhom} define $A_i^G(X) = A_{i +
  l-g}(X \times_G U)$.
\end{defi}

\begin{remark}
Again the
definition is independent of the choice of $U, V$.

Because equivariant Chow groups are defined as Chow groups of schemes
they enjoy the same formal properties as ordinary Chow groups.
In particular, if $X$ is smooth then there is an intersection
product on the equivariant Chow groups. Note, however, that cycles may
have negative dimension; i.e. $A_k^G(X)$ can be non-zero for $k <
0$. As a consequence one cannot conclude that a particular
equivariant intersection product is 0 for dimensional reasons.

For schemes over $\C$
the cycle maps $A_k(X \times_G U) \to H_{2k}^{BM}(X \times_G U)$ of
\cite[Chapter 19]{Ful:84}
define cycle maps on equivariant homology $cl\colon A_k^G(X) \to
H_{2k}^G(X)$. When $X$ is smooth of complex dimension $n$ 
the latter group can be identified with $H^{2n-k}_G(X)$.

If $X$ is a $G$-space then any $G$-invariant subvariety $V$ of dimension $k$
defines an
equivariant fundamental class $[V]_G$ in $H_{2k}^G(X)$ and
$A_{k}^G(X)$. However, unless $G$ acts with finite stabilizers then
$A_*^G(X)$ is not even rationally generated by fundamental classes of invariant
subvarieties.
\end{remark}

\subsubsection{Equivariant Chow cohomology}
The definition of equivariant Chow cohomology is analogous to the
definition of
ordinary Chow cohomology. 
\begin{defi}
An equivariant Chow cohomology class $c
\in A^i_G(X)$ is an assignment of an operation $c(t)\colon A_*^G(T)
\to A_{*-i}^G(T)$ for every equivariant map $T \to X$ such that the $c(t)$ are
compatible with the usual operations in equivariant intersection theory (flat
pullback and lci pullback, proper pushforward, etc.). Composition
makes the set of equivariant Chow cohomology classes into a graded
ring which we denote $A^*_G(X) := \oplus_{i=0}^\infty A^i_G(X)$.
\end{defi}
\begin{remark} Because equivariant Chow groups can be non-zero in
  arbitrary negative degree $A^k_G(X)$ may be non-zero for arbitrarily
  large $k$.
\end{remark}

\begin{prop} \cite[Proposition 19]{EdGr:98}
If ${\mathcal X} = [X/G]$ is a quotient CFG then $A^*({\mathcal X}) =
A^*_G(X)$.
\end{prop}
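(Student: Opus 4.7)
The strategy is to follow the proof of Theorem \ref{thm.cfgequivcoh} essentially verbatim, with equivariant Chow cohomology in place of equivariant singular cohomology and homotopy invariance of Chow groups for vector bundles in place of the contractibility of $EG$. Concretely, I would fix a codimension $i$ and, invoking Totaro's construction, choose a representation $V$ of $G$ together with an open $U \subset V$ on which $G$ acts freely and such that $\codim_V(V \smallsetminus U) > i$. The goal is to show that pullback along the canonical map $u \colon X \times_G U \to [X/G]$ (associated to the torsor $X \times U \to X \times_G U$ together with the equivariant projection $X \times U \to X$) induces an isomorphism $A^k([X/G]) \to A^k(X \times_G U)$ for $k \leq i$. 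By Definition \ref{def.equivchow} the right-hand side is $A^k_G(X)$, so letting $i$ grow yields the proposition.

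The pullback map itself is defined by $c \mapsto c(u)$ and is a ring homomorphism by Definition \ref{def.cfgchowcoh}. The main technical input is the Chow analog of the vanishing $H^k(U) = 0$ for $k \leq 2i$ used in the cohomological proof: for any $G$-torsor $E \to T$, the flat pullback $A^k(T) \to A^k(E \times_G U)$ is an isomorphism in codimensions $k \leq i$. To verify this, I would factor the map through $E \times_G V \to T$, which is a rank $\dim V$ vector bundle, and the open immersion $E \times_G U \hookrightarrow E \times_G V$. Pullback along the vector bundle is an isomorphism on Chow cohomology by homotopy invariance, and restriction along the open immersion is an isomorphism on operational classes of codimension $\leq i$ because its complement $E \times_G (V \smallsetminus U)$ has codimension $> i$, so by the excision sequence on Chow groups it cannot contribute to operations of codimension $\leq i$.

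Given this key input, injectivity and surjectivity of $A^k([X/G]) \to A^k(X \times_G U)$ proceed exactly as in the proof of Theorem \ref{thm.cfgequivcoh}. For injectivity, if $c(u) = 0$ and $t$ is the object of $[X/G](T)$ corresponding to a torsor $E \to T$ with equivariant map $E \to X$, then the cartesian diagram
$$
\begin{array}{ccc}
E \times U & \to & E \\
\downarrow & & \downarrow \\
E \times_G U & \to & T
\end{array}
$$
and the compatibility condition in Definition \ref{def.cfgchowcoh} force $c(t)$ and $c(u)$ to have the same further pullback in $A^k(E \times_G U)$; since the map $A^k(T) \to A^k(E \times_G U)$ is an isomorphism in this range and the pullback of $c(u)$ vanishes, so does $c(t)$. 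As $t$ was arbitrary, $c = 0$. For surjectivity one reverses the construction: a class $d \in A^k(X \times_G U)$ defines each $c(t)$ as the unique preimage in $A^k(T)$ of the pullback of $d$ to $A^k(E \times_G U)$, and one checks that these assignments respect arbitrary morphisms in $[X/G]$.

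The main obstacle is establishing the homotopy-invariance/excision statement for operational Chow cohomology rather than for Chow groups directly, since operational classes on a possibly singular base $T$ are not controlled by cycles on $T$ in an immediate way. This requires the formal setup of \cite{EdGr:98}: operational Chow cohomology satisfies homotopy invariance along vector bundle projections and a form of excision that lets high-codimension complements be ignored at the level of operations of bounded codimension. Once these two properties are in hand, the proof is a formal transcription of the cohomological argument.
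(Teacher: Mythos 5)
There is a genuine gap, and it sits exactly where you flag the ``main obstacle.'' First, a point of bookkeeping that matters: Definition \ref{def.equivchow} defines the equivariant Chow \emph{homology} groups $A_i^G(X)$, not the operational ring $A^*_G(X)$; the latter is defined as operations on $A_*^G(T)$ for all equivariant maps $T \to X$. The identification $A^k_G(X) = A^k(X \times_G U)$ is not a definition but a theorem (Proposition \ref{prop.equivchowcoh}), and it carries the hypothesis that $X$ admits an equivariant resolution of singularities. So even if your argument succeeded, it would prove $A^k([X/G]) \cong A^k(X\times_G U)$, which only yields the stated proposition under that extra hypothesis. Second, and more seriously, the ``excision'' step is not a formal consequence of the excision sequence for Chow groups. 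Injectivity of $A^k(T) \to A^k(E\times_G U)$ in the range $k \leq i$ can be pushed through, but surjectivity cannot: given $d \in A^k(E\times_G U)$ and an arbitrary $T' \to T$, the descent of $d \cap (\text{pullback of }\alpha)$ back to $A_{m-k}(T')$ requires the flat pullback $A_{m-k}(T') \to A_{m-k+l}(T'\times_T(E\times_G U))$ to be injective, which by excision needs $m - k \geq \dim T' - i$. Since $T'$ ranges over schemes of unbounded dimension while $U,V$ (hence $i$) are fixed, this fails for small $m$, so you cannot define the operation $c(t')$ on all of $A_*(T')$. This is precisely the difficulty that forces the envelope/resolution techniques behind Proposition \ref{prop.equivchowcoh}, and it does not disappear by ``letting $i$ grow'' because the operational class must be defined before $i$ is chosen.

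The route actually taken in \cite[Proposition 19]{EdGr:98} (the paper itself offers no proof, only the citation) avoids the approximation spaces entirely and compares the two operational theories directly. The bridge is the canonical identification $A_*^G(E) \cong A_{*-\dim G}(T)$ for a $G$-torsor $E \to T$: an element of $A^*_G(X)$ gives an operation on $A_*^G(E)$ for every equivariant map $E \to X$, hence an operation on $A_*(T)$ for every object of $[X/G](T)$, and conversely. Compatibility with flat pullback, proper pushforward, etc.\ on both sides is checked torsor by torsor. This is a purely formal dictionary, valid with integer coefficients, over any field, and with no smoothness or resolution hypotheses --- in contrast to the approximation argument, which is the right tool for Theorem \ref{thm.cfgequivcoh} (where the contractibility of $EG$ and the vanishing $H^k(U)=0$ do all the work) but does not transcribe to the operational setting. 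If you want to salvage your approach, you would need to restrict to the case where $X$ has an equivariant resolution of singularities and invoke Proposition \ref{prop.equivchowcoh} as a black box rather than rederiving it from ``excision.''
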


Given a representation $V$ and an open set $U$ on which $G$ acts
freely such that $\codim (V \smallsetminus U) > i$ then we can compare
the Chow cohomology of $X \times_G U$ with $A^*_G(X)$.

\begin{prop} \cite[Corollary 2]{EdGr:98} \label{prop.equivchowcoh}
If $X$ has an equivariant resolution of singularities then $A^k_G(X) =
A^k(X \times_G U)$ for $k < i$. 
\end{prop}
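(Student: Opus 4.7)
The plan is to mirror the cohomological argument given in the preceding proof. I construct the comparison map as the pullback $u^*: A^k([X/G]) \to A^k(X \times_G U)$ along the morphism $u: X \times_G U \to [X/G]$ defined by the trivial torsor $X \times U \to X \times_G U$ together with the equivariant projection $X \times U \to X$. Since the previous proposition identifies $A^k([X/G])$ with $A^k_G(X)$, this is a map $A^k_G(X) \to A^k(X \times_G U)$, which I claim is an isomorphism for $k < i$.

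For injectivity, suppose $c \in A^k_G(X)$ satisfies $c(u) = 0$. For any object $t$ of $[X/G](T)$ with associated torsor $E \to T$ and equivariant map $E \to X$, I would form the cartesian square
$$
\begin{array}{ccc}
E \times U & \longrightarrow & E \\
\downarrow & & \downarrow \\
E \times_G U & \longrightarrow & T
\end{array}
$$
whose vertical arrows are $G$-torsors. The bottom map $E \times_G U \to T$ is a Zariski-locally trivial $U$-bundle. Because $U$ is the complement of a closed subset of codimension $> i$ in the affine space $V$, homotopy invariance of Chow groups combined with excision shows that the flat pullback $A^k(T) \to A^k(E \times_G U)$ is an isomorphism for $k < i$. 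The compatibility condition in Definition \ref{def.cfgchowcoh} forces $c(t)$ and $c(u)$ to have equal images in $A^k(E \times_G U)$; hence $c(t) = 0$. Since $t$ was arbitrary, $c = 0$.

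Surjectivity is handled analogously: given $\gamma \in A^k(X \times_G U)$, I would define $c(t)$ to be the preimage, under the pullback isomorphism $A^k(T) \to A^k(E \times_G U)$, of the pullback of $\gamma$ along $E \times_G U \to X \times_G U$. Compatibility under morphisms in $[X/G]$ follows from the naturality of the fibration argument, and independence of the approximation $U$ (so long as $\codim(V \smallsetminus U) > i$) follows from the same argument applied to a common refinement of two approximations.

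The main obstacle is justifying the key Chow cohomology isomorphism $A^k(T) \to A^k(E \times_G U)$ for $k < i$ when $T$ is possibly singular: Chow cohomology does not \emph{a priori} enjoy the same easy homotopy invariance as Chow homology. This is precisely where the hypothesis of an equivariant resolution of singularities enters. Kimura's descent theorem for Chow cohomology (cf.\ \cite[Example 17.4.4]{Ful:84}) reduces the claim on $T$ to the corresponding claim on a smooth resolution, where Poincar\'e duality converts Chow cohomology into Chow homology and the desired statement becomes the standard homotopy invariance for cycles on a vector bundle minus a high-codimension subset, exactly mirroring the argument with $H^k(U) = 0$ used in the cohomological version above.
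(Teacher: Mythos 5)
The paper gives no internal proof of this statement --- it is imported directly from \cite[Corollary 2]{EdGr:98} --- so there is nothing in the text to compare your argument against line by line. Your sketch is a sensible reconstruction: it transplants the topological argument given just above (the comparison of $H^k({\mathcal X})$ with $H^k(X\times_G U)$ via the object $u$ and the $U$-fibration $E\times_G U\to T$) into the operational Chow setting, and you correctly isolate the one place where the analogy genuinely breaks, namely that for singular $T$ the pullback $A^k(T)\to A^k(E\times_G U)$ is not an instance of naive homotopy invariance for operational Chow groups, so some descent mechanism along resolutions is required. That is indeed the crux, and the degree bound $k<i$ enters exactly where you say it does, through excision for Chow homology of the vector bundle $E\times_G V$ minus the closed set $E\times_G(V\smallsetminus U)$ of codimension $>i$.

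Two caveats keep this from being a complete proof. First, as you have organized the argument the descent must be applied to each test scheme $T$ (equivalently to $E\times_G U$), whereas the hypothesis only provides an equivariant resolution of $X$ itself; this is harmless in characteristic $0$, where resolutions and envelopes exist for everything, but strictly speaking you are using a stronger hypothesis than the one stated. The proof in the cited reference applies the descent to $X$ instead: one resolves $X$ equivariantly, uses Kimura's exact sequence for operational Chow groups along the envelope on both the equivariant side and on $X\times_G U$, and on the smooth resolution concludes via Poincar\'e duality between operational classes and (equivariant) Chow homology, where the approximation argument is immediate. Second, your citation of \cite[Example 17.4.4]{Ful:84} for ``Kimura's descent theorem'' is not right --- that example uses resolutions to prove commutativity of $A^*(X)$; the exact descent sequence you actually need is in Kimura's work on bivariant intersection theory, which is what \cite{EdGr:98} invoke. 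Neither point invalidates the strategy, but both should be repaired before the sketch could stand as a proof.
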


Applying Proposition \ref{prop.equivchowcoh} and the Poincar\'e
duality isomorphism between Chow groups and Chow cohomology we obtain.
\begin{prop} \label{prop.algpd}
If $X$ is smooth of dimension $n$ 
then there is an isomorphism $A^k({\mathcal X}) \to
A_{n-k}^G(X)$ where ${\mathcal X} = [X/G]$. Moreover, the ring structure on $A^*({\mathcal X})$ given by
composition of operations is compatible with the ring structure on
$A_*^G(X)$ given by equivariant intersection product.
\end{prop}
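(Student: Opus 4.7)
The plan is to reduce the statement to ordinary Poincar\'e duality on a smooth finite-dimensional scheme, namely one of Totaro's mixed space approximations $X \times_G U$. First I would replace $A^*(\mathcal{X})$ by $A^*_G(X)$ using the already-stated \cite[Proposition 19]{EdGr:98} (Proposition preceding \ref{prop.equivchowcoh}), so the task becomes exhibiting, for each $k$, a ring-compatible isomorphism $A^k_G(X) \to A_{n-k}^G(X)$.

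Next, for the fixed degree $k$, I would choose an $l$-dimensional representation $V$ of $G$ with an open set $U$ on which $G$ acts freely and with $\codim(V \smallsetminus U) > k$ (and also $> n-k$, which we may arrange by taking a representation with sufficiently high-codimensional bad locus, since a direct sum of copies of any good $V$ works). Because $X$ is smooth, it trivially admits an equivariant resolution of singularities, so Proposition \ref{prop.equivchowcoh} identifies $A^k_G(X)$ with the ordinary Chow cohomology $A^k(X \times_G U)$. Moreover the mixed quotient $X \times_G U$ is smooth of dimension $n+l-g$ since $X \times U$ is smooth of dimension $n+l$ and $G$ acts freely with $g$-dimensional orbits.

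Now apply ordinary Poincar\'e duality in Chow cohomology (\cite[Corollary 17.4]{Ful:84}, quoted just after Definition \ref{def.cfgchowcoh}) to the smooth scheme $X \times_G U$: it gives a ring isomorphism
\[
A^k(X \times_G U) \xrightarrow{\;\sim\;} A_{n+l-g-k}(X \times_G U),
\]
compatible with the intersection product. By Definition \ref{def.equivchow} the right-hand side is exactly $A_{n-k}^G(X)$, so composing the two identifications yields the desired isomorphism $A^k(\mathcal{X}) \cong A_{n-k}^G(X)$, and independence from the choice of $(V,U)$ is a standard double-fibration argument as in the discussion following Definition \ref{def.equivchow}.

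The only nontrivial point will be the ring-theoretic compatibility: one must check that the equivariant intersection product on $A_*^G(X)$, defined via intersections on the schemes $X \times_G U$, corresponds under the above identifications to composition of equivariant Chow cohomology operations. This is essentially automatic, since by construction both products are computed on the smooth approximation $X \times_G U$ and ordinary Poincar\'e duality there intertwines cup product (composition of operations) with intersection product; the stability of the isomorphisms under enlarging $U$ ensures the resulting ring isomorphism on equivariant groups is well defined. I expect this compatibility verification to be the main bookkeeping step, but it uses no new ingredient beyond the smooth-scheme case and the independence of the construction from the choice of approximation.
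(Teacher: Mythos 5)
Your proposal is correct and follows essentially the same route as the paper, which derives the result in one line by combining the identification $A^*([X/G]) = A^*_G(X)$, Proposition \ref{prop.equivchowcoh} applied to the smooth mixed quotient $X \times_G U$, and ordinary Poincar\'e duality for Chow cohomology on that smooth scheme. Your write-up simply makes explicit the dimension count and the independence-of-approximation bookkeeping that the paper leaves implicit.
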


\begin{remark}
Putting the various propositions together implies that if $X$ is
smooth, then a 
$G$-invariant subvariety $V \subset X$ defines a Chow cohomology class
$c_V \in A^*([X/G])$. If $G$ acts with finite stabilizers then the
classes
$c_V$ generate $A^*([X/G])$ rationally \cite[Proposition 13]{EdGr:98}.

In addition when $X$ is smooth over $\Spec \C$ there is a degree-doubling cycle class map
$cl \colon A^*([X/G]) \to H^*([X/G])$ having the same formal
properties as the cycle class map on smooth complex varieties.
\end{remark} 

\subsubsection{Picard groups of moduli problems}
In \cite[Section 5]{Mum:65} Mumford defined the integral Picard group of what he
called the {\em moduli problem} $\Mg$. As Mumford noted, the definition 
can be expressed in the language of fibered categories and
makes sense for an arbitrary CFG over $\sS$.
\begin{defi} \cite[p.64]{Mum:65}
Let ${\mathcal X}$ be a CFG. A line bundle on ${\mathcal X}$ is the
assignment to every scheme $T$ and every object $t$ in ${\mathcal X}(T)$
a line bundle $L(t)$ on $T$. The line bundles $L(t)$ should satisfy the
natural compatibility conditions with respect to pullbacks in the
category ${\mathcal X}$.
Tensor product makes the collection of line bundles on ${\mathcal X}$ into
an abelian group $\Pic({\mathcal X})$.
\end{defi}

If $L$ is a line on ${\mathcal X}$ then $L$ has a first Chern class
$c_1(L) \in A^1({\mathcal X})$.

\begin{prop} \cite[Proposition 18]{EdGr:98}
Let ${\mathcal X} = [X/G]$ be a quotient CFG with $X$ smooth. Then
the map $\Pic({\mathcal X}) \to A^1({\mathcal X})$, $L \mapsto c_1(L)$ is an isomorphism.
\end{prop}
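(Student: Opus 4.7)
The plan is to reduce the statement to the classical isomorphism $\Pic(Y) \cong A^1(Y)$ for smooth varieties $Y$ by using Totaro's approximation, exactly as in the proof of Theorem~\ref{thm.cfgequivcoh} and Proposition~\ref{prop.equivchowcoh}. Choose a representation $V$ of $G$ and an open set $U \subset V$ on which $G$ acts freely with $\codim_V(V \smallsetminus U) \geq 2$, and such that the mixed space $X_G := X \times_G U$ exists as a scheme (which it does under the hypotheses of Proposition~\ref{prop.equivchowcoh}). Since $X$ and $U$ are smooth and $G$ acts freely on $X \times U$, the quotient $X_G$ is a smooth variety.

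First, I would establish a canonical isomorphism $\Pic({\mathcal X}) \cong \Pic(X_G)$. In the forward direction, the map $u \colon X_G \to {\mathcal X}$ associated to the torsor $X \times U \to X_G$ with equivariant projection $X \times U \to X$ pulls a line bundle $L$ on ${\mathcal X}$ back to $L(u) \in \Pic(X_G)$. For the inverse, a line bundle $M$ on $X_G$ corresponds via descent through the torsor $X \times U \to X_G$ to a $G$-equivariant line bundle $\widetilde{M}$ on $X \times U$. Because $X \times V$ is smooth and $X \times (V \smallsetminus U)$ has codimension $\geq 2$, the restriction map $\Pic_G(X \times V) \to \Pic_G(X \times U)$ is an isomorphism; composed with homotopy invariance for the vector bundle $X \times V \to X$ this identifies $\widetilde{M}$ with a $G$-equivariant line bundle on $X$, which (as in Example~\ref{ex.quotchern}) defines a line bundle on ${\mathcal X}$ by pulling back along torsors and descending. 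One checks these two constructions are mutually inverse.

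Second, I would invoke the classical Poincar\'e duality isomorphism $\Pic(X_G) \to A^1(X_G)$, $L \mapsto c_1(L)$, valid because $X_G$ is a smooth variety. Combined with the identification $A^1(X_G) \cong A^1_G(X) = A^1({\mathcal X})$ from Proposition~\ref{prop.equivchowcoh} (which requires precisely $\codim(V \smallsetminus U) > 1$) and Proposition~\ref{prop.algpd}, this produces the desired isomorphism $\Pic({\mathcal X}) \cong A^1({\mathcal X})$. Naturality of $c_1$ under pullback shows that the composite map is indeed $L \mapsto c_1(L)$ in the sense of Example~\ref{ex.quotchern}: the class $c_1(L)$ on ${\mathcal X}$ evaluated on a torsor $E \to T$ with equivariant map $E \to X$ is, by definition, the first Chern class of the descended line bundle on $T$, matching the image of $L$ under the chain of identifications above.

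The main obstacle is the first step, the identification $\Pic({\mathcal X}) \cong \Pic(X_G)$. The delicate point is to show that a line bundle on the scheme $X_G$, which a priori only remembers information over $U$, actually extends uniquely and $G$-equivariantly to all of $X$ and thereby determines a coherent assignment $t \mapsto L(t)$ for every object $t$ of ${\mathcal X}$. This is where the codimension-$\geq 2$ hypothesis is essential: it guarantees that line bundles extend across $V \smallsetminus U$ on the smooth ambient $X \times V$, and independence from the choice of $(V, U)$ follows by comparing any two such approximations via their product, as in the standard argument that equivariant Chow groups are well-defined.
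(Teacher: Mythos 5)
The paper gives no proof of this proposition (it is quoted from \cite[Proposition 18]{EdGr:98}), and your argument is essentially the proof from that reference: identify $\Pic({\mathcal X})$ with $\Pic_G(X)$ by descent along torsors, pass to the Totaro approximation $X\times_G U$ using that deleting a closed subset of codimension $\geq 2$ from the smooth variety $X\times V$ changes neither the equivariant Picard group nor $A^1$, and conclude via $\Pic\cong A^1$ on the smooth variety $X\times_G U$ together with Proposition \ref{prop.algpd}. This is correct; the only small point worth flagging is that Proposition \ref{prop.equivchowcoh} as stated in the paper would formally require $\codim(V\smallsetminus U)>2$ to control $A^1$, but the codimension-two divisor argument you give (which is where the smoothness of $X$ is genuinely used) is exactly what is needed and is how the identification $A^1_G(X)=A^1(X\times_G U)$ is established in \cite{EdGr:98}.
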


\begin{example}
Since $\Mgbar = [\Hbar_g/\PGL_{5g-5}]$ and $\Hbar_g$ is smooth,
the group $A^1(\Mgbar)$ defined here may identified with the group
$\Pic_{fun}(\Mgbar)$ of \cite[Definition 3.87]{HaMo:98}.
\end{example}

\subsection{Equivariant cohomology and the CFG of curves}
The results of Sections \ref{subsec.equivcoh} and
\ref{subsec.equivchow} imply that the cohomology (resp. Chow) rings of
the CFGs $\Mg$ and $\Mgbar$ are identified with the
$\PGL_{5g-6}$-equivariant cohomology (resp. Chow) rings of the smooth
varieties $H_g$ and
$\Hbar_g$. In particular any invariant subvariety of $H_g$ or
$\Hbar_g$ defines an integral class in $H^*(\Mg)$ or
$H^*(\Mgbar)$. 

The description of the cohomology of $\Mg$ as the equivariant
cohomology of $H_g$ gives, in principal, a method for computing the
integral cohomology ring $H^*(\M_g)$. Unfortunately, for $g>2$ there
are no effective methods for carrying out computations. Note that the
rings $A^*(\Mg)$ and $H^*(\Mg)$ will have non-zero torsion in
arbitrarily high degree. However, $H^k(\Mg)
\otimes \Q =0$ if $k > 6g-6$ and $A^k(\Mg) \otimes \Q = 0$ if $k > 3g
-3$ (Proposition \ref{prop.dmcohvanishing}).

\begin{example}[The stable cohomology ring of $\Mg$]
The best results on the cohomology of
$\Mg$ use the fact that its rational cohomology is the same as the
rational cohomology of the mapping class group. In \cite{Har:85} Harer
used topological methods to prove that $H^k(\Mg) \otimes \Q$ stabilizes for $g
\geq 3k$. 
Thus we can define the stable cohomology ring of $\Mg$ as the limit
of the $H^k(\Mg) \otimes \Q$ as $g \to \infty$. In
\cite{MaWe:07}
Madsen and Weiss proved Mumford's conjecture: the stable cohomology
ring
of $\Mg$ as $g \to \infty$ is isomorphic to the ring $\Q[\kappa_1,
\kappa_2, \ldots ]$.

Since the stable cohomology ring of $\Mg$ as $g \to \infty$ is
algebraic,
a natural question is the following:
\begin{question} \label{ques.stablechow}
Is there a version of Harer's stability theorem for the Chow groups of
$\Mg$?
\end{question}

To prove the stability theorem Harer used the fact that there are maps
$H^k(\Mg) \to H^k(\M_{g+1})$ induced by the topological operation of
cutting out two disks from a
Riemann surface of genus $g$ and gluing in a ``pair of pants'' to
obtain
a Riemann surface of genus $g+1$. Since there is no algebraic counterpart to
this operation, Question \ref{ques.stablechow} seems to be out of
reach of current techniques.
\end{example}

\begin{example}[Integral Chow rings of curves of very low genus]
In very low genus and for hyperelliptic curves the methods of
equivariant algebraic geometry have been successfully used to compute
integral Chow rings. 

In genus $0$ the Chow ring of $\M_0^0 = B\PGL_2$ was computed by
Pandharipande
\cite{Pan:98}. The Chow ring of $\M_0^{\leq 1}$ was computed by
Fulghesu and the author \cite{EdFu:08}. The rational Chow ring of
$\M_0^{\leq 3}$ was computed by Fulghesu in \cite{Fulg:09c}.

In genus 1, the Chow rings of $\M_{1,1}$ and $\Mbar_{1,1}$ were
computed in \cite{EdGr:98}. In \cite{Vis:98} Vistoli
computed the Chow ring of $\M_2$. Fulghesu, Viviani and the author \cite{EdFu:09,FuVi:10}
extended
Vistoli's techniques to compute, for all $g$, the Chow ring of the CFG
${\mathcal H}_g$
parametrizing hyperelliptic curves.

In each case the presentations of the Chow rings are relatively
simple. The reader may refer to the cited papers for the precise
statements.
\end{example}

In a sense, the greatest value of the equivariant point of view
is psychological.  It turns intersection theory on the CFGs $\Mg$ and $\Mgbar$
into equivariant intersection theory on the smooth varieties $H_g$ and
$\Hbar_g$. In this way we can avoid worrying about the
intricate rational intersection theory on orbifolds and stacks developed by
Mumford \cite{Mum:83}, Vistoli \cite{Vis:89}, and Gillet \cite{Gil:84}.
In addition, because the intersection theory has integer coefficients, we may
obtain slightly stronger results. We illustrate with a simple example
from \cite{Mum:83}.
\begin{prop}
The following relation holds in the integral Chow ring $A^*(\Mg)$
\begin{equation}
(1 + \lambda_1 + \ldots + \lambda_g)(1 -\lambda_1 + \ldots +
(-1)^g\lambda_g) = 1
\end{equation}
\end{prop}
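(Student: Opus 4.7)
By the construction of the $\lambda$-classes (in the Construction directly above), $\lambda_i = c_i(\mathbb{E})$, where $\mathbb{E} := \pi_*\omega_{\mathcal{C}/\Mg}$ denotes the Hodge bundle on $\Mg$ descending from the $\PGL_{5g-5}$-equivariant bundle $(p_H)_*\omega_{\Zbar_g/\Hbar_g}$ on $\Hbar_g$ (restricted to the smooth locus $H_g$). The two factors on the left-hand side are therefore the total Chern classes $c(\mathbb{E})$ and $c(\mathbb{E}^\vee)$, so the assertion is equivalent to
$$c(\mathbb{E})\cdot c(\mathbb{E}^\vee) = c(\mathbb{E}\oplus\mathbb{E}^\vee) = 1 \quad \text{in}\quad A^*(\Mg).$$

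The plan is to apply Grothendieck--Riemann--Roch to the universal curve $\pi\colon \mathcal{C}\to\Mg$ (existing as a fibre product of CFGs and representable smooth of relative dimension $1$). Write $K = c_1(\omega_{\mathcal{C}/\Mg})$; since the relative tangent sheaf is $\omega^\vee$, one has $\mathrm{td}(T_{\mathcal{C}/\Mg}) = K/(e^K - 1)$. Relative Serre duality on fibres of genus $g$, together with connectedness of fibres, gives the K-theoretic identities $[R\pi_*\omega] = [\mathbb{E}] - [\mathcal{O}]$ and $[R\pi_*\mathcal{O}] = [\mathcal{O}] - [\mathbb{E}^\vee]$. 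Applying GRR to both and subtracting, the Todd denominator cancels against $e^K - 1$, and using $\pi_*(K) = 2g-2$ one obtains
$$\mathrm{ch}(\mathbb{E}) + \mathrm{ch}(\mathbb{E}^\vee) - 2 = \pi_*\!\bigl((e^K - 1)\cdot K/(e^K - 1)\bigr) = 2g - 2,$$
i.e.\ $\mathrm{ch}(\mathbb{E}) + \mathrm{ch}(\mathbb{E}^\vee) = 2g$.

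Next, compare degree by degree. In odd degree the equality is tautological since $\mathrm{ch}_n(\mathbb{E}^\vee) = -\mathrm{ch}_n(\mathbb{E})$. In each positive even degree $2n$ it forces $\mathrm{ch}_{2n}(\mathbb{E}) = 0$, i.e., in the notation of Chern roots $\alpha_1,\ldots,\alpha_g$ of $\mathbb{E}$, $\sum_i \alpha_i^{2n} = 0$ for every $n \geq 1$. Newton's identities applied in the variables $\alpha_i^2$ then give inductively $e_n(\alpha_1^2,\ldots,\alpha_g^2) = 0$ for all $n\geq 1$; equivalently $\prod_i(1-\alpha_i^2) = 1$, which is exactly $c(\mathbb{E})c(\mathbb{E}^\vee) = 1$.

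The main obstacle is integrality: GRR is intrinsically a rational statement, so the argument above yields the identity only in $A^*(\Mg)\otimes\mathbb{Q}$. To obtain it in the integral Chow ring one should replace the GRR step by the Hodge-to-de-Rham degeneration for the universal family (valid at $E_1$ in characteristic zero), which provides the extension of locally free sheaves
$$0 \longrightarrow \mathbb{E} \longrightarrow \mathcal{H}^1_{\mathrm{dR}}(\mathcal{C}/\Mg) \longrightarrow \mathbb{E}^\vee \longrightarrow 0$$
and reduces the integral claim to showing that $c(\mathcal{H}^1_{\mathrm{dR}}) = 1$ in the integral equivariant Chow ring of $H_g$. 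This final vanishing, extracted from the algebraic Gauss-Manin connection on $\mathcal{H}^1_{\mathrm{dR}}$, is the technically subtle part, but all the rational content of the identity comes from the GRR computation above.
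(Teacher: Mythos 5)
Your GRR computation is correct as far as it goes, but it establishes the relation only in $A^*(\Mg)\otimes \Q$, and the point of this proposition in the paper (see the sentence introducing it: the example is meant to illustrate that working with integer coefficients gives ``slightly stronger results'') is precisely the integral statement. The obstruction is not only that GRR and the Chern character carry denominators: even if you somehow knew $\mathrm{ch}_{2n}(\mathbb{E})=0$ ``integrally,'' Newton's identities applied to the power sums $p_n(\alpha_1^2,\dots,\alpha_g^2)$ only yield $n\,e_n(\alpha_1^2,\dots,\alpha_g^2)=0$, i.e.\ that the degree-$n$ part of $c(\mathbb{E})c(\mathbb{E}^\vee)-1$ is $n$-torsion, not that it vanishes. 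So the route through Chern characters and power sums cannot, even in principle, recover the integral identity; it is intrinsically a rational argument.

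The paper's proof is exactly the argument you relegate to your final paragraph: Mumford's argument on p.~306 of \cite{Mum:83}, applied $\PGL_{5g-5}$-equivariantly to the universal tri-canonically embedded family $Z_g\to H_g$. There the extension $0\to\mathbb{E}\to\mathcal{H}^1_{\mathrm{dR}}\to\mathbb{E}^\vee\to 0$ together with the vanishing of the Chern classes of $\mathcal{H}^1_{\mathrm{dR}}$ (via the flat Gauss--Manin connection) gives $c(\mathbb{E})\,c(\mathbb{E}^\vee)=c(\mathcal{H}^1_{\mathrm{dR}})=1$ immediately from the Whitney sum formula --- no Chern characters, no Newton's identities. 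Two consequences for your write-up: first, once that argument is in place your GRR computation is entirely superfluous rather than being ``the rational content'' of the proof; second, the one assertion that actually carries the integral content of the proposition, namely $c(\mathcal{H}^1_{\mathrm{dR}})=1$ in the integral equivariant Chow ring of $H_g$, is exactly the step you declare subtle and leave unproved. As submitted, the proposal therefore proves the rational version by a different (and correct) route but has a genuine gap where the integral claim is concerned.
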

\begin{proof}
Apply the argument of \cite{Mum:83} pp. 306 to the family of
tri-canonically smooth curves $Z_g \stackrel{\pi} \to H_g$ to
conclude
that $c({\mathbb E}_{H_g})  c(\check{{\mathbb E}}_{H_g})
=1$ where ${\mathbb E}_{H_g}  = \pi_*(\omega_{Z_g/H_g})$ is the
Hodge bundle on $H_g$.
Under the identification $A^*(\Mg) = A^*_G(H_g)$ the tautological
class
$\lambda_i$ identifies with $c_i({\mathbb E}_{H_g})$.
\end{proof}

Mumford proved a number of other relations using the
Grothendieck-Riemann-Roch theorem. Again these can be derived using
the equivariant version of the Grothendieck-Riemann-Roch theorem for
the map $\Zbar_g \to \Hbar_g$.  Since the formulas for the Chern
character involves denominators, Mumford's formulas hold
after inverting the primes dividing the denominators. A natural
question is the following.
\begin{question}
Do the tautological relations obtained by Mumford in \cite{Mum:83}
 hold in $A^*(\Mg)$
or
$A^*(\Mgbar)$ after minimally clearing denominators? Similarly, do Faber's relations
\cite{Fab:99} hold after minimally clearing denominators?
\end{question}
\begin{remark}
The author does not have any particular insight into this
question. However, we  point out that Mumford proved 
\cite[Theorem 5.10]{Mum:77}
that the relation 
\begin{equation}
12\lambda_1 = \kappa_1 + \delta
\end{equation}
holds integrally in $Pic(\Mgbar ) = A^1(\Mgbar)$
where $\delta = \sum_{i=0}^{[g/2]} \delta_i$. Mumford
was able to show this because he used transcendental methods to prove
that $\Pic^G(\Hbar_g) = \Pic(\Mbar_g) = A^1(\Mbar_g)$ is
torsion free.
\end{remark}

\section{Stacks, moduli spaces and cohomology} \label{sec.stacks}
The goal of this final section is to  define Deligne-Mumford
stacks and their moduli spaces. As mentioned in the introduction,
stacks are CFGs with certain algebro-geometric properties. The CFGs
$\Mg$, $\Mbar_g$, $\Mbar_{g,n}$ and $\Mbar_{g,n}$ are all
Deligne-Mumford stacks whenever $2g -2 + n > 0$.
Associated to a Deligne-Mumford stack is a {\em coarse moduli space}. In
general the coarse moduli space of a Deligne-Mumford stack is 
an algebraic space, but the moduli
spaces of the stacks of smooth and stable curves are
in fact quasi-projective and projective varieties. In Section \ref{subsec.cohmoduli} we
will construct an isomorphism between the rational Chow groups
of a quotient Deligne-Mumford stack and the rational Chow groups of
its moduli space. This isomorphism defines an intersection product on
the Chow groups of the moduli space. A similar result also holds for
homology.

\subsection{Deligne-Mumford stacks} \label{subsec.DMstacks}
So far we have not imposed any algebro-geometric conditions on our
CFGs. A Deligne-Mumford stack is a CFG which is, in an appropriate
sense, ``locally'' a scheme. 

\begin{construction}
Given a CFG ${\mathcal X}$ choose for each morphism of
scheme $T' \stackrel{f} \to T$ and each object $t$ in ${\mathcal
  X}(T)$
an object $f^*t$ in ${\mathcal X}(T')$ such that there is a morphism
$f^*t \to t$ whose image in $\sS$ is the map $f \colon T' \to T$. The
axioms
of a CFG imply that the $f^*t$ always exist and are unique up to canonical
isomorphism.
\end{construction}

\begin{defi} Let ${\mathcal X}$ be a CFG. Let $T$ be an
$S$-scheme and let $\{ T_i \stackrel{p_i} \to T\}$ be a collection
of \'etale maps whose images cover $T$. Set $T_{ij} = T_i \times_T
T_j$ and identify $T_{ij} = T_{ji}$ for all $i,j$.
Similarly denote by $T_{ijk}$ any of the products canonically
isomorphic
to  $T_{ij} \times_{T_j} T_{jk}$. If $t_i$ is an object
${\mathcal X}(T_i)$ let $t_{ij}$ be the pullback of $t_i$ to 
${\mathcal X}(T_{ij})$ along the projection map $T_{ij} \to T_i$. 
A {\em descent datum} for the covering $\{ T_i \to T\}$ is 
a collection of objects $t_i$ in ${\mathcal X}(t_i)$
together with isomorphisms $\phi_{ij} \colon t_{ij} \to t_{ji}$ in
${\mathcal X}(T_{ij})$ which satisfy the cocycle condition when pulled
back
to ${\mathcal X}(T_{ijk})$; i.e., $\phi_{jk} \circ \phi_{ij} = \phi_{ik}$
after pullback to ${\mathcal X}(T_{ijk})$.
\end{defi}

\begin{defi} \label{def.dmstack} \cite[Definition 4.6]{DeMu:69}
A CFG ${\mathcal X}$ over $\sS$ is a {\em Deligne-Mumford (DM) stack} if the following 3 conditions
are satisfied.

i) The diagonal map ${\mathcal X} \to {\mathcal X}\times_S
{\mathcal X}$ is representable, quasi-compact and and
separated.\footnote{The representability of the diagonal map is
  equivalent to the condition that any morphism from a scheme $T \to
  {\mathcal X}$ is representable \cite[Proposition 7.13]{Vis:89}.}

ii) (Effectivity of descent)
If $T$ is an $S$-scheme and $\{T_i \stackrel{p_i} \to T\}$ is an \'etale cover of $T$
then given descent datum relative to this covering there exists an
object
$t$ in ${\mathcal X}$ and isomorphisms $\phi_i \colon p_i^*t \to t_i$ in ${\mathcal
  X}(T_i)$
such that after pullback to ${\mathcal X}(T_{ij})$ $\phi_{ij} = \phi_j
\circ \phi_i^{-1}$.

iii) There exists an \'etale surjective morphism from a scheme $U \to {\mathcal
  X}$.
(The representability of the diagonal implies that any $U \to
{\mathcal X}$ is representable, so it makes sense to talk about this
map being \'etale and surjective.)
\end{defi}

\begin{remark} In \cite{DeMu:69} the term algebraic stack was used for a CFG
satisfying the conditions of Definition \ref{def.dmstack}. However, 
recent works reserve the term {\em algebraic stack} for a more general
class of stacks defined by Artin (also called Artin stacks). Note
that we do not define a {\em stack} - only a {\em Deligne-Mumford stack}. 
A stack is a generalization to CFGs of the concept of an \'etale
sheaf. The definition of a stack is given in \cite[Definition
4.1]{DeMu:69}.
A CFG satisfying conditions i) and ii) of Definition \ref{def.dmstack}
is a stack in the sense of Definition 4.1 of \cite{DeMu:69} and all
stacks satisfy condition (ii) of our definition. However, not all
stacks satisfy condition 
(i).
\end{remark}

\begin{remark}
  If ${\mathcal X}$ is a CFG associated to a functor i) and ii) imply
  that ${\mathcal X}$ is a sheaf in the \'etale topology. A
  Deligne-Mumford stack which is a sheaf is called an {\em algebraic
    space}.
\end{remark}

\subsubsection{The diagonal and automorphisms}
To give a morphism $T \to {\mathcal X} \times {\mathcal X}$ is
equivalent to giving a pair of objects $t_1$ in ${\mathcal X}(T)$ and
$t_2$ in ${\mathcal X}(T)$. If ${\mathcal X}$ is a CFG with a representable
diagonal then the
fiber product ${\mathcal X} \times_{{\mathcal X} \times {\mathcal X}}
T$ is the scheme which represents the functor
$\Hom_{t_1,t_2}$. This functor assigns to any
$T$-scheme 
$T' \stackrel{f} \to T$ the set of isomorphisms in ${\mathcal X}(T')$
between $f^*t_1$ and $f^*t_2$. 
(Note that for
any CFG and any scheme $T$ the fiber product ${\mathcal X}
\times_{{\mathcal X} \times {\mathcal X}} T$ is automatically the CFG
associated to the functor $\Hom_{t_1,t_2}$ \cite[Lemma 2.4.1.4]{LaMB:00}.)
In particular, if $t_1 = t_2 = t$ then 
${\mathcal X} \times_{{\mathcal X} \times {\mathcal X}} T$ is the
scheme
which represents the automorphism group-scheme $\Aut(t) \to T$ whose
fiber
over a point $p \colon \Spec k \to T$ is the automorphism group of $p^*t$.

The condition that there is an \'etale surjective morphism from a
scheme $U \to {\mathcal X}$ implies that the these automorphism groups
are finite.
\begin{prop} \label{prop.diagunram} \cite[Proposition 7.15]{Vis:89} 
If ${\mathcal X}$ is a DM stack then the diagonal ${\mathcal X} \to
{\mathcal X} \times {\mathcal X}$ is unramified. Equivalently, for
every algebraically closed field $K$ and every object $x$ in ${\mathcal
  X}(\Spec K)$
the automorphism group $\Aut(x)$ is finite and reduced over $K$.
\end{prop}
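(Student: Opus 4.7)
The plan is to use the étale atlas $p\colon U \to {\mathcal X}$ of condition (iii) in Definition \ref{def.dmstack} to reduce the entire statement to a property of morphisms of schemes. First I would form the 2-Cartesian square
$$
\begin{array}{ccc}
U \times_{{\mathcal X}} U & \stackrel{f}\to & U \times U \\
\downarrow & & \downarrow p \times p \\
{\mathcal X} & \stackrel{\Delta}\to & {\mathcal X} \times {\mathcal X},
\end{array}
$$
in which $U \times_{{\mathcal X}} U$ is an actual scheme (by representability of $\Delta$) and the right-hand arrow is étale surjective. By Remark \ref{rem.propertyp}, $\Delta$ being unramified means precisely that every base change $T \times_{{\mathcal X}\times {\mathcal X}} {\mathcal X} \to T$ is an unramified morphism of schemes; since unramifiedness is local on the target for the étale topology, it is enough to verify this after the further étale surjective base change by $p \times p$, i.e.\ to show that $f$ itself is unramified.

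Next I would examine the cotangent sheaf of $f$ via the factorisation $U \times_{{\mathcal X}} U \stackrel{f}\to U \times U \stackrel{\pi_1}\to U$. The composition equals the first projection $p_1\colon U \times_{{\mathcal X}} U \to U$, which is a base change of the étale morphism $p$ along itself, hence étale. Consequently $\Omega_{U \times_{{\mathcal X}} U / U} = 0$, and the right-exact cotangent sequence
$$
f^* \Omega_{U \times U / U} \longrightarrow \Omega_{U \times_{{\mathcal X}} U / U} \longrightarrow \Omega_f \longrightarrow 0
$$
immediately forces $\Omega_f = 0$. Combined with $f$ being locally of finite type (which follows because $p_1 = \pi_1\circ f$ is étale and $\pi_1$ is smooth), this shows $f$ is unramified, completing the proof that $\Delta$ is unramified.

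The second half of the proposition then follows quickly. The discussion preceding the proposition identifies $\Aut(x) = \Spec K \times_{{\mathcal X} \times {\mathcal X}} {\mathcal X}$ for an object $x$ in ${\mathcal X}(\Spec K)$, via the maps $(x,x)$ and $\Delta$; this is a base change of $\Delta$ and therefore representable, quasi-compact, separated and unramified over $\Spec K$. When $K$ is algebraically closed any such scheme is a finite disjoint union of copies of $\Spec K$, so $\Aut(x)$ is finite and reduced.

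The only point that will require care is the descent step in the first paragraph: one must know that unramifiedness of a morphism of schemes is étale-local on the target, which is a standard EGA fact (the module of differentials commutes with arbitrary base change and vanishing of a sheaf is étale-local) and is exactly what makes the ``property ${\mathbf P}$'' framework of Remark \ref{rem.propertyp} meaningful. No essentially new algebro-geometric input beyond the étale atlas is required.
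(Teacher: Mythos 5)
The paper gives no proof of this proposition — it is quoted directly from Vistoli's paper \cite[Proposition 7.15]{Vis:89} — so there is no in-text argument to compare against; your write-up is essentially the standard proof one finds in that reference. The argument is correct: the reduction to $f\colon U\times_{\mathcal X}U\to U\times U$ via faithfully flat (étale) descent of unramifiedness is legitimate, the cotangent-sequence computation killing $\Omega_f$ is right, and the passage to geometric points correctly uses quasi-compactness of the diagonal (condition (i) of Definition \ref{def.dmstack}) to get finiteness of $\Aut(x)$. Two small remarks. First, the reason $f$ is locally of finite type is simply that $\pi_1\circ f=p_1$ is locally of finite type, and a morphism whose composition with another morphism is locally of finite type is itself locally of finite type; the smoothness of $\pi_1$ is irrelevant (and $\pi_1$ need not be smooth unless $U\to S$ is). Second, you establish that the diagonal is unramified and that this \emph{implies} the automorphism groups are finite and reduced, but the word ``equivalently'' in the statement also asserts the converse; that direction follows because a morphism locally of finite type is unramified if and only if its geometric fibres are finite and reduced, and the geometric fibres of the diagonal are the Isom-schemes, each empty or a torsor under some $\Aut(x)$. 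Adding one sentence to that effect would make the proof complete as stated; neither point affects the correctness of what you did prove.
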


\subsubsection{Quotient CFGs and Deligne-Mumford stacks}
Let $X$ be a scheme of finite type over $S$.
If $G$ is an smooth affine group scheme over $S$ then it is
relatively straightforward to show that the $[X/G]$ satisfies the effective
descent condition of Definition \ref{def.dmstack} and the diagonal of ${\mathcal X}
= [X/G]$ is representable, quasi-compact and separated. The only
question is whether there exists an \'etale surjective morphism $U \to
{\mathcal X}$. If $G$ is \'etale over $S$ (for example if $S$ is
a point and $G$ is finite)  then the map $X \to
{\mathcal X}$ is \'etale so $[X/G]$ is a DM stack. Unfortunately, CFGs
such as $\Mg$ are quotients by non-finite groups.
The next theorem gives a criterion for $[X/G]$ to be a DM stack. 

\begin{thm} \label{thm.cfgisdmstack}
Let $G$ be a smooth group scheme over $S$. A quotient CFG 
${\mathcal X} = [X/G]$ is a DM stack if and only if the stabilizer of
every geometric point of $X$ is finite and reduced.
\end{thm}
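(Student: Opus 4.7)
The plan is to observe first that, as stated in the paragraph preceding the theorem, conditions (i) and (ii) of Definition~\ref{def.dmstack} hold for any quotient of a scheme by a smooth affine group scheme; so the content of the theorem is the equivalence of condition (iii)---the existence of an \'etale atlas---with the stabilizer hypothesis.

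For the ``only if'' direction I would invoke Proposition~\ref{prop.diagunram}: if $[X/G]$ is Deligne--Mumford then for every algebraically closed field $K$ and every object $x$ of $[X/G](\Spec K)$ the automorphism group $\Aut(x)$ is finite and reduced. I then need to identify this $\Aut$-group, for $x$ coming from a geometric point $\bar x \colon \Spec K \to X$, with the stabilizer $G_{\bar x}$. The image of $\bar x$ in $[X/G]$ is the trivial torsor $G_K \to \Spec K$ equipped with the equivariant map $g \mapsto g \bar x$. Any $G$-equivariant self-isomorphism of a trivial torsor is right multiplication by some $h \in G(K)$, and compatibility with the equivariant map $g \mapsto g \bar x$ forces $h \bar x = \bar x$, i.e., $h \in G_{\bar x}(K)$. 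This is the same pattern of computation already carried out in the excerpt to identify $T \times_{[X/G]} X$ with the torsor $E$.

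For the ``if'' direction I would construct an \'etale surjection from a scheme to $[X/G]$ by cutting out local \emph{slices} to the $G$-action. Fix a geometric point $\bar x$ of $X$ and set $n = \dim G$. Since $G_{\bar x}$ is finite and reduced, its Lie algebra $\mathfrak{g}_{\bar x}$ vanishes, so the infinitesimal action $\mathfrak{g} \to T_{\bar x} X$ is injective with $n$-dimensional image. Working in an affine \'etale neighborhood of $\bar x$, I would choose regular functions $f_1, \ldots, f_n$ whose differentials at $\bar x$ span a complement to the image of $\mathfrak g$ in the cotangent space, and set $S = V(f_1, \ldots, f_n)$. The multiplication map $\mu \colon G \times S \to X$, $(h,s) \mapsto h \cdot s$, then has bijective differential at $(e, \bar x)$. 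After possibly shrinking $S$ and $X$ around $\bar x$ one upgrades this pointwise statement to \'etaleness of $\mu$ on an open neighborhood of $(e, \bar x)$; and because $\mu$ is equivariant for the left-translation action of $G$ on $G \times S$, its \'etale locus is $G$-saturated, so one may arrange $\mu$ to be \'etale on all of $G \times S$. A direct computation of the 2-fiber product, exactly parallel to the one in the excerpt showing $X \times_{[X/G]} X = G \times X$, identifies $\mu$ with the pullback of $S \to [X/G]$ along the smooth surjection $X \to [X/G]$, so $S \to [X/G]$ itself is \'etale by smooth descent of \'etaleness. A disjoint union of finitely many such slices, enough by quasi-compactness to hit every $G$-orbit on $X$, provides the required \'etale cover.

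The principal obstacle is passing from the differential computation at $(e, \bar x)$ to \'etaleness of $\mu$ in a neighborhood, since $X$ need not be smooth. One resolves this by combining surjectivity on cotangent spaces (which yields formal surjectivity of the induced map of completed local rings) with a dimension count showing that $G \times S$ and $X$ have the same dimension at $\bar x$---a consequence precisely of $G_{\bar x}$ being zero-dimensional. The reducedness half of the hypothesis is essential here in positive characteristic, since a finite but non-reduced stabilizer would have nonzero $\mathfrak{g}_{\bar x}$ and the slice construction would collapse at the infinitesimal level.
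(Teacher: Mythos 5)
Your ``only if'' direction is the same as the paper's: identify the automorphism group of the object of $[X/G](\Spec K)$ determined by $\bar x$ with the stabilizer $G_{\bar x}$ and invoke Proposition~\ref{prop.diagunram}. (One small point: you should make this identification as an isomorphism of group \emph{schemes}, not just on $K$-points, since reducedness of $\Aut(x)$ is part of what Proposition~\ref{prop.diagunram} gives you.) For the ``if'' direction the paper simply cites the Deligne--Mumford criterion (Theorem~\ref{thm.DMcrit}), whereas you attempt to build the \'etale atlas directly by slicing; this is the more self-contained route alluded to in Remark~\ref{rem.etaleslice}, and your reductions (saturation of the \'etale locus of $\mu$, descent of \'etaleness of $S \to [X/G]$ along the smooth surjection $X \to [X/G]$, covering by finitely many slices) are all fine.

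The gap is in the step you yourself flag as the principal obstacle: deducing that $\mu \colon G \times S \to X$ is \'etale at $(e,\bar x)$ from bijectivity of $d\mu_{(e,\bar x)}$. Surjectivity on cotangent spaces does give a surjection $\widehat{\mathcal O}_{X,\bar x} \twoheadrightarrow \widehat{\mathcal O}_{G\times S,(e,\bar x)}$, equivalently that $\mu$ is unramified at $(e,\bar x)$; but ``surjection of complete local rings of the same dimension'' does not imply isomorphism when $X$ is singular. For instance $k[[x,y]]/(xy) \to k[[x]]$, $y \mapsto 0$, is a surjection of one-dimensional complete local rings that is not injective: if $\widehat{\mathcal O}_{X,\bar x}$ has several minimal primes, your argument cannot rule out that $G\times S$ formally lands in a single branch. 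What is actually missing is \emph{flatness} of $\mu$ at $(e,\bar x)$, and the dimension count does not supply it. The correct way to get it is to use the smoothness of $G$ a second time: the action map $\alpha \colon G \times X \to X$, $(g,x)\mapsto gx$, is smooth of relative dimension $n$ (it is the second projection composed with the shearing automorphism), its fiber over $\bar x$ is isomorphic to $G$, hence smooth of dimension $n$, and your transversality condition says exactly that $f_1,\dots,f_n$ pull back to functions whose differentials are independent on that fiber at $(e,\bar x)$ --- i.e.\ to a regular sequence on the fiber. The local criterion of flatness (slicing a flat module by elements that are regular on the closed fiber) then gives that $G\times S = V(f_1,\dots,f_n) \subset G \times X$ is flat over $X$ at $(e,\bar x)$; combined with unramifiedness this yields \'etaleness. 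Note that this is also precisely where the hypothesis that $G_{\bar x}$ is \emph{reduced} enters, as you correctly anticipate. With this repair your argument goes through and amounts to an in-line proof of Theorem~\ref{thm.DMcrit} in the quotient case.
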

\begin{remark}
Over a field of characteristic 0 every group scheme is smooth, so the
the stabilizers of geometric points are always reduced. In this case
$[X/G]$ is a DM stack if and only if $G$ acts on $X$ with finite
stabilizers.
\end{remark}

\begin{proof}[Proof sketch]
Let ${\mathcal X} = [X/G]$ be a quotient CFG and
let $x$ be an
object of
${\mathcal X}(\Spec K)$ where $K$ is an algebraically closed field. Since any torsor $E \to \Spec K$ is trivial
the isomorphism class of $x$ is determined by the $G$-equivariant map
$E \to X$. Since $E \to X$ is $G$-equivariant and $E$ consists of a
single $G$-orbit the image of $E$  in $X$ is an orbit. Hence objects
of
${\mathcal X}(\Spec K)$ correspond to $G$-orbits. The automorphism
group of an object of ${\mathcal X}(\Spec K)$ is the just the
stabilizer of that orbit. Hence, a necessary condition for $[X/G]$ to
be a DM stack is that the stabilizer of every point is finite and
reduced.

The converse follows from the following theorem, originally stated by Deligne and
Mumford.
\begin{thm} \label{thm.DMcrit} \cite[Thm 4.21]{DeMu:69}
Let ${\mathcal X}$ be a CFG which satisfies conditions i) and ii) of
Definition \ref{def.dmstack}. Assume in addition that

i') The diagonal ${\mathcal X} \to {\mathcal X} \times {\mathcal X}$
is unramified.

ii') There exists a smooth surjective map from a scheme $X \to {\mathcal X}$.

\noindent Then ${\mathcal X}$ is a DM stack.
\end{thm}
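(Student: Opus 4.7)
The plan is to upgrade the given smooth surjection $p\colon X \to \mathcal{X}$ to an étale surjection by carving transverse slices in the fibers of $p$. The quotient-stack case is the guiding picture: for $\mathcal{X}=[X/G]$ with finite reduced stabilizers, the natural atlas $X \to [X/G]$ has relative dimension $\dim G$, but Luna-type slices give étale atlases of the correct dimension, and the unramified-diagonal hypothesis is precisely what makes such slicing possible.

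First I would form the fiber product $R := X \times_{\mathcal{X}} X$, which by condition (i) is represented by a scheme, with smooth projections $s,t\colon R \to X$ (base changes of $p$). The combined morphism $(s,t)\colon R \to X \times X$ is a base change of the diagonal of $\mathcal{X}$, hence unramified by (i'). Thus $R \rightrightarrows X$ is a smooth groupoid whose identity section $e\colon X \to R$ is an open immersion onto its connected component, and the stabilizer scheme of every geometric point of $X$ is finite and étale.

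Next I would work locally. Fix a geometric point $\bar{x}\to X$ and let $n$ be the relative dimension of $p$ at $\bar{x}$. Smoothness of $p$ together with the groupoid description of the tangent theory makes $\Omega_{X/\mathcal{X}}$ locally free of rank $n$ near $\bar{x}$. Choose, in an étale neighborhood of $\bar{x}$, functions $f_1,\dots,f_n$ vanishing at $\bar{x}$ whose differentials give a basis of $\Omega_{X/\mathcal{X},\bar{x}}$, and let $U \subset X$ be their vanishing locus. I claim $U \to \mathcal{X}$ is étale near $\bar{x}$: unramifiedness is immediate from the conormal sequence, since $\Omega_{U/\mathcal{X}} = \Omega_{X/\mathcal{X}}|_U/(df_i)$ vanishes at $\bar{x}$, and flatness follows because étale-locally on $X$ the morphism $p$ factors as an étale map to an affine-space fibration over $\mathcal{X}$, with $U$ pulled back from the zero section. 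Letting $\bar{x}$ vary over a set of geometric points covering $X$ and taking the disjoint union of the resulting slices produces the required étale surjection to $\mathcal{X}$.

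The main obstacle is the tangent/deformation-theoretic input: one must know that $\Omega_{X/\mathcal{X}}$ is well-defined and locally free for a smooth morphism from a scheme to a CFG with unramified diagonal, and that the local structure theorem for smooth morphisms—factorization as an étale map composed with an affine-space projection—carries over in this stacky setting. Both facts rely crucially on combining (i') with the groupoid presentation supplied by (i) and (ii'); without the unramified diagonal, the projections $s,t$ would be smooth of positive relative dimension with no étale section, and no slicing argument on $X$ could produce an étale atlas. Once these foundations are in place, the remainder of the argument is essentially a local dimension count in the spirit of Luna's slice theorem.
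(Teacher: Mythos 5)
The paper does not actually prove this theorem: it is quoted from \cite[Thm 4.21]{DeMu:69}, and the only argument supplied is the special-case sketch in Remark \ref{rem.etaleslice} (characteristic $0$, $\mathcal X=[X/G]$ with $G$ reductive and $X$ normal and covered by invariant affines), which invokes Luna's \'etale slice theorem. Your proposal is instead the general argument, essentially the original one of Deligne--Mumford (see also \cite[Th\'eor\`eme 8.1]{LaMB:00}): form the groupoid $R=X\times_{\mathcal X}X$, note that $(s,t)\colon R\to X\times X$ is unramified as a base change of the diagonal, and slice the smooth atlas transversally to the fibers of $p$ to produce an \'etale atlas. This buys full generality --- no characteristic, reductivity, or normality hypotheses and no appeal to Luna --- at the price of the foundational input you correctly identify ($\Omega_{X/\mathcal X}$ and the local structure of smooth representable morphisms); the cleanest way to discharge it is to check \'etaleness of $U\to\mathcal X$ after the smooth surjective base change $X\to\mathcal X$, where it becomes the purely scheme-theoretic assertion that $t\colon s^{-1}(U)\to X$ is \'etale near the identity, provable from unramifiedness of $(s,t)$ and a dimension count on the smooth fibers of $t$. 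Two small corrections to your setup: the identity section $e\colon X\to R$ is \emph{not} in general an open immersion onto a connected component of $R$ (for $\mathcal X=[X/G]$ one has $R\cong G\times X$ and $e$ is a closed immersion of codimension $\dim G$); what is true, and what you actually use, is that the stabilizer scheme $R\times_{X\times X}X\to X$ is unramified, so \emph{its} identity section is an open immersion and its geometric fibers are finite (using quasi-compactness of the diagonal from condition i)) and reduced. With those adjustments the outline is sound.
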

\end{proof}
\begin{remark} \label{rem.etaleslice} In characteristic 0 there is a
  relatively simple proof of Theorem \ref{thm.DMcrit} when $G$ is
  reductive, $X$ is smooth (or even normal) and covered by $G$-invariant affine open
  sets.  The proof goes as follows.  If $x$ is a point of $X$
  then $x$ is an contained in an affine $G$-invariant open set
  $U_x$. Conditions i) and ii) imply that the stabilizers of the $G$-action
are finite so the $G$-orbits of closed points are also closed.
The \'etale slice theorem \cite{Lun:73} then implies that for closed points 
there is a smooth
  affine variety $W_x$ and $G$-equivariant strongly \'etale
  map\footnote{A $G$-equivariant map of affine varieties $\Spec A \to
    \Spec B$ is {\em strongly \'etale} if the induced map $\Spec A^G
    \to \Spec B^G$ is \'etale and $A = B \otimes_{B^G} A^G$. These
    conditions imply that the map $\Spec A \to \Spec B$ is also \'etale.}
  $G \times_{G_x} W_x \to U_x$ (Here $G_x$ is the stabilizer of $x\in
  X$). The collection of $\{W_x\}_{x \in X}$ forms an \'etale cover of
  $[X/G]$.
\end{remark}

\subsubsection{CFGs of stable curves are DM stacks}

\begin{prop} \label{prop.mgisdm}
The CFGs $\Mg$, $\Mbar_{g}$ are DM stacks if $g \geq 2$.
\end{prop}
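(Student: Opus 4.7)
The plan is to combine the quotient presentations already established in Propositions \ref{prop.mgisquot} and \ref{prop.mgbarisquot} with the criterion of Theorem \ref{thm.cfgisdmstack}. Since $\PGL_{5g-5}$ is a smooth affine group scheme, it suffices to verify that the action of $\PGL_{5g-5}$ on $H_g$ (respectively on $\Hbar_g$) has finite and reduced stabilizers at every geometric point.

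Let me first identify the stabilizers. A geometric point of $H_g$ over an algebraically closed field $K$ is a smooth tri-canonically embedded curve $C \hookrightarrow \Pro^{5g-6}_K$ with ${\mathcal O}_C(1) \simeq \omega_C^{\otimes 3}$. An element $\varphi \in \PGL_{5g-5}(K)$ stabilizes this point precisely when $\varphi$ carries $C$ to itself, i.e., induces an automorphism of $C$. Conversely, because $\omega_C$ is intrinsic to $C$, every $\sigma \in \Aut(C)$ acts on $H^0(C,\omega_C^{\otimes 3})$ and, after passing to projectivizations, induces a unique element of $\PGL_{5g-5}$ stabilizing $C$. Thus the stabilizer group scheme of the point is canonically isomorphic to $\Aut(C)$. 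The identical analysis applies on $\Hbar_g$ with stable curves.

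Next I need to check that $\Aut(C)$ is finite and reduced for every (stable) curve $C$ of genus $g \geq 2$. Finiteness in the smooth case is the classical Hurwitz bound $|\Aut(C)| \leq 84(g-1)$; alternatively, since $\omega_C^{\otimes 3}$ is very ample, $\Aut(C)$ embeds in the finite scheme of linear automorphisms of $\Pro(H^0(C,\omega_C^{\otimes 3}))$ preserving $C$, which has no positive-dimensional part because $H^0(C,T_C) = 0$ when $g \geq 2$. For a stable curve $C$, the condition that every rational component meets the rest of the curve in at least $3$ points rigidifies each component, and together with finiteness of the permutation action on the dual graph this forces $\Aut(C)$ to be finite; again the vanishing $H^0(C,T_C(-\text{nodes})) = 0$ shows there is no infinitesimal deformation of the identity.

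The main subtlety is the reducedness of $\Aut(C)$. Over a field of characteristic zero this is automatic since every finite group scheme over such a field is reduced, so no further work is needed for $S = \Spec \C$. In positive or mixed characteristic one invokes the vanishing $H^0(C,T_C) = 0$ (respectively $H^0(C,\mathcal{H}om(\Omega^1_C,{\mathcal O}_C)) = 0$ for a stable $C$) to conclude that the Lie algebra of the automorphism group scheme is trivial, hence $\Aut(C)$ is \'etale. With finiteness and reducedness of stabilizers verified, Theorem \ref{thm.cfgisdmstack} gives at once that both $\Mg = [H_g/\PGL_{5g-5}]$ and $\Mgbar = [\Hbar_g/\PGL_{5g-5}]$ are Deligne-Mumford stacks.
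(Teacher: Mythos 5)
Your proof is correct and follows essentially the same route as the paper: both reduce the claim, via the quotient presentations $[H_g/\PGL_{5g-5}]$ and $[\Hbar_g/\PGL_{5g-5}]$, to the statement that the stabilizer of a geometric point is $\Aut(C)$ and that this group scheme is finite and reduced because a (stable) curve of genus $g \geq 2$ has no infinitesimal automorphisms. The paper phrases this as unramifiedness of the diagonal and cites \cite[Lemma 1.4]{DeMu:69}, whereas you invoke Theorem \ref{thm.cfgisdmstack} and supply the vanishing of $H^0(C,T_C)$ directly; these are the same argument in different packaging.
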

\begin{proof}
Since $\Mgbar = [\Hbar_g/\PGL_{5g-5}]$ we know that it satisfies
conditions i) and ii) of Definition \ref{def.dmstack} as well as
condition ii') of Theorem \ref{thm.DMcrit}. The only thing to show
is that the diagonal is unramified. This is equivalent to showing that
$\Aut(C)$ is finite and unramified for every stable curve $C$
defined 
over an algebraically closed field $K$. The last fact follows from
\cite[Lemma 1.4]{DeMu:69} which implies that $C$ does not have any
infinitesimal automorphisms.
\end{proof}

A similar argument shows that the CFG $\Mbar_{1,1}$ is a DM stack as
well.

\begin{prop}
If $2g-2 + n > 0$ then the CFGs $\M_{g,n}$ and $\Mbar_{g,n}$ are DM
stacks.
\end{prop}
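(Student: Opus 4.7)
The plan is to mirror the proof of Proposition \ref{prop.mgisdm}: reduce to Theorem \ref{thm.cfgisdmstack} by exhibiting $\Mbar_{g,n}$ as a quotient CFG with finite reduced stabilizers, and then observe that $\M_{g,n}$ sits inside as an open subCFG.

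I would treat $\Mbar_{g,n}$ first. For $g \geq 1$, the discussion immediately preceding this proposition already presents $\Mbar_{g,n}$ as a quotient CFG $[Y/G]$: one starts from the base presentations ($\Mbar_g = [\Hbar_g/\PGL_{5g-5}]$ for $g \geq 2$, or $\Mbar_{1,1} = [W/\PGL_3]$ for $g = 1$) and iterates Knudsen's representable contraction morphisms $\Mbar_{g,k+1} \to \Mbar_{g,k}$ via the proposition that the pullback of a quotient CFG along a representable morphism is again a quotient CFG. By Theorem \ref{thm.cfgisdmstack} it then suffices to check that the geometric stabilizers of the $G$-action on $Y$ are finite and reduced. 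These stabilizers are exactly the automorphism group schemes $\Aut(C; p_1, \ldots, p_n)$ of pointed stable curves, i.e., automorphisms of $C$ fixing each marked point. The pointed analogue of \cite[Lemma 1.4]{DeMu:69} asserts that for $2g - 2 + n > 0$ these group schemes are finite and carry no infinitesimal automorphisms; its proof is the standard one, as the marked points rigidify precisely the rational components that would otherwise support a positive-dimensional automorphism group. For $g = 0$ the argument is immediate: the remark following Theorem \ref{thm.knudsen} records that $\Mbar_{0,n}$ is representable by a smooth projective scheme whenever $n \geq 3$.

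Next I would turn to $\M_{g,n}$. It is a full subCFG of $\Mbar_{g,n}$, and since smoothness of the fibers of a flat proper family is an open condition on the base, the inclusion $\M_{g,n} \hookrightarrow \Mbar_{g,n}$ is representable by open immersions, just as in the example $\M_g \hookrightarrow \Mbar_g$ noted earlier. The DM stack property passes to open subCFGs: the pullback of an \'etale atlas $U \to \Mbar_{g,n}$ along the open immersion is an open subscheme of $U$, so it gives an \'etale atlas of $\M_{g,n}$, while the representable/quasi-compact/separated diagonal and the effectivity of descent restrict directly from $\Mbar_{g,n}$. Hence $\M_{g,n}$ is a DM stack.

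The hard part will be the pointed analogue of \cite[Lemma 1.4]{DeMu:69}: showing that the automorphism group scheme of an $n$-pointed stable curve with $2g-2+n > 0$ is both finite and reduced. Once this input is in hand, the remaining ingredients — Knudsen's theorem, the quotient-lifting proposition, Theorem \ref{thm.cfgisdmstack}, and the inheritance of the DM stack property under open immersions — are already assembled in this chapter, and the proof reduces to stitching them together.
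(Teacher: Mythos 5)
Your outline is workable, but it takes a genuinely different (and heavier) route than the paper. The paper's proof is a one-line induction: Knudsen's theorem makes $\Mbar_{g,n+1}\to\Mbar_{g,n}$ representable, and a simple lemma says that any CFG admitting a representable morphism to a DM stack is itself a DM stack (pull back the \'etale atlas; the diagonal conditions are inherited). The base cases $\Mbar_g$ ($g\geq 2$), $\Mbar_{1,1}$, and $\Mbar_{0,3}=\Spec S$ are already in hand, and $\M_{g,n}$ is an open subCFG. No new statement about automorphism groups of \emph{pointed} curves is ever needed. You instead re-derive the quotient presentation $[Y/G]$ and invoke Theorem \ref{thm.cfgisdmstack}, which forces you to verify that $\Aut(C;p_1,\ldots,p_n)$ is finite and reduced --- the ``pointed Lemma 1.4'' you correctly flag as the hard part and leave unproved. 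That input is true and standard, but note that even within your own framework it is unnecessary: in the iterated quotient presentation the stabilizer of a point of $Y$ is a closed subgroup scheme of the stabilizer of its image in $\Hbar_g$, i.e.\ of $\Aut(C)$ for a stable curve $C$, and a closed subgroup scheme of a finite reduced group scheme over an algebraically closed field is again finite and reduced. So the deformation-theoretic computation you defer can be bypassed entirely, which is precisely what the paper's representability lemma accomplishes in one stroke. Your treatment of $\M_{g,n}$ as an open subCFG inheriting the DM property is fine and matches the paper's implicit argument.
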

\begin{proof}
Recall that $\Mbar_{g,n+1} \to \Mbar_{g,n}$ is representable by
  Knudsen's theorem. The proof follows by induction and the following
  simple lemma.
\begin{lemma} 
Let ${\mathcal Y}\to {\mathcal X}$ be a representable morphism of
CFGs. If ${\mathcal X}$ is a DM stack then ${\mathcal Y}$ is as well.
\end{lemma}
\end{proof}

\begin{example} (Curves of very low genus)
The CFGs $\M_{0}$ and $\M_1$ are not DM stacks since the automorphism
group of $\Pro^1$ is $\PGL_2$ and the automorphism group of a curve
$E$
of genus 1 contains the curve itself.

\end{example}
\subsubsection{Separated and proper DM stacks}
We briefly discuss what it means for a DM stack to be separated
or proper over the ground scheme $S$. There is also a notion of
separation and properness for arbitrary morphisms of DM stacks but we
do not discuss this here. Note that we have already implicitly defined
what it means for a representable morphism to be separated or proper,
since these properties are preserved by base change
(Remark \ref{rem.propertyp}).
\begin{defi}
A DM stack ${\mathcal X}$ is {\em separated} over $S$ if the diagonal
$\Delta\colon {\mathcal X} \to {\mathcal X}\times_S {\mathcal X}$ is
proper. 
\end{defi}
\begin{remark}
Since the diagonal of a DM stack is unramified, a DM stack is
separated if and only if the diagonal is finite.
\end{remark}
Not surprisingly for quotient DM stacks the separation condition can be
characterized in terms of the group action.
\begin{prop}
A quotient stack ${\mathcal X} = [X/G]$ is separated if and only if $G$
acts {\em properly} on $X$; i.e. the map $G \times X \to X \times
X$, $(g,x) \mapsto (x,gx)$ is a proper morphism.
\end{prop}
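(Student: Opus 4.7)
The plan is to unwind the definition: $\mathcal{X}=[X/G]$ is separated when $\Delta\colon\mathcal{X}\to\mathcal{X}\times_S\mathcal{X}$ is proper, and since $\Delta$ is representable, properness can be checked after smooth surjective base change. The natural cover of $\mathcal{X}\times\mathcal{X}$ is $X\times X$, via the composite of $X\to[X/G]$ with itself, which is a $(G\times G)$-torsor and hence smooth and surjective. So my key computation will be to identify the pullback
\[
(X\times X)\times_{\mathcal{X}\times\mathcal{X}}\mathcal{X}
\]
as a scheme and describe the induced morphism to $X\times X$, then invoke descent of properness.

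For the fiber product computation I would argue exactly as in the earlier example showing $T\times_{[X/G]}X\cong E$. An object of the fiber product over a test scheme $T$ consists of a pair of maps $(t_1,t_2)\colon T\to X\times X$ (equivalently, two trivial torsors $G\times T\to T$ with equivariant maps $(g,t)\mapsto g\,t_i(t)$), a map $T\to\mathcal{X}$ corresponding to a torsor $E\to T$ with equivariant $f\colon E\to X$, and, via the diagonal, isomorphisms $\phi_1,\phi_2\colon G\times T\xrightarrow{\sim}E$ of torsors compatible with $t_1$ and $t_2$ through $f$. The two trivializations $\phi_1,\phi_2$ differ by a map $g\colon T\to G$ satisfying $g\cdot t_1=t_2$ in $X$. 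Thus the data are equivalent to a map $T\to G\times X$ (namely $(g,t_1)$), and the fiber product is represented by the scheme $G\times X$, with the projection to $X\times X$ being precisely the action morphism
\[
\sigma\colon G\times X\longrightarrow X\times X,\qquad (g,x)\longmapsto(x,gx).
\]

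With that identification in hand, the proposition reduces to a standard descent statement: a representable morphism of DM stacks is proper if and only if it becomes proper after smooth surjective base change. Applying this to $\Delta$ and the cover $X\times X\to\mathcal{X}\times\mathcal{X}$ gives $\Delta$ proper $\iff\sigma$ proper, which is exactly the definition of $G$ acting properly on $X$. Since the diagonal of a DM stack is automatically unramified and quasi-compact (Proposition \ref{prop.diagunram} and condition i) of Definition \ref{def.dmstack}), the only nontrivial condition in separatedness is the properness of $\Delta$, so this completes both directions.

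I expect the only real obstacle to be pinning down the fiber product calculation cleanly; once the map $G\times X\to X\times X$ is identified as the orbit map, the rest is formal descent of properness along the smooth surjection $X\times X\to\mathcal{X}\times\mathcal{X}$. Everything else in the argument has already been set up in the excerpt (the torsor description of $X\to[X/G]$, representability of the diagonal, and the equivalence between representable properness and properness of the base changes to schemes noted in Remark \ref{rem.propertyp}).
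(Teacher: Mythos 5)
Your argument is correct. The paper actually states this proposition without proof, so there is nothing to compare against; what you give is the standard argument, and it is complete. Your fiber product computation is right and in fact the key identification already appears elsewhere in the paper: in the sketch proof that a representable ${\mathcal Y}\to[X/G]$ makes ${\mathcal Y}$ a quotient stack, the author records that $X\times_{\mathcal X}X$ is represented by $G\times X$ with the two projections being $(g,x)\mapsto x$ and $(g,x)\mapsto gx$, which is exactly your statement that $(X\times X)\times_{{\mathcal X}\times{\mathcal X}}{\mathcal X}\to X\times X$ is the map $\sigma(g,x)=(x,gx)$. One direction of the equivalence is then immediate from the paper's definition of properness for representable morphisms (take $T=X\times X$); the other direction is the only place you need an input not set up in the paper, namely that properness of morphisms of schemes descends along the smooth surjective (fppf) cover $T'=T\times_{{\mathcal X}\times{\mathcal X}}(X\times X)\to T$, which is standard but worth citing explicitly.
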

The following result from GIT shows that the properness of a group
action is a relatively natural condition.
\begin{thm} \cite[Converse 1.13]{MFK:94}
Let $G$ act on a projective variety and assume that the generic
stabilizer is finite. Let $X^s(L)$ be the set of stable points with
respect to an ample line bundle $L$.
Then $G$
acts properly on $X^s(L)$. 
\end{thm}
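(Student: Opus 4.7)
The plan is to establish properness of the action map $\psi \colon G \times X^s(L) \to X^s(L) \times X^s(L)$, $(g,x) \mapsto (x, gx)$, by exhibiting it as the composition of a finite morphism with a closed immersion, both arising from the structure of the GIT quotient on the stable locus.

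First I would invoke the main construction of Geometric Invariant Theory \cite[Theorem 1.10]{MFK:94}: since $L$ is ample there is a projective good quotient $X^{ss}(L) \to X^{ss}(L)//G = \mathrm{Proj}\bigl(\bigoplus_{n \geq 0} H^0(X, L^{\otimes n})^G\bigr)$, and its restriction to the stable locus is a \emph{geometric} quotient $\pi \colon X^s(L) \to Y$ onto an open subscheme $Y$ of $X^{ss}(L)//G$. Three features of this quotient are critical: $\pi$ is $G$-invariant, its scheme-theoretic fibers over geometric points are the $G$-orbits (this is where the defining property of stability — closed orbits with finite stabilizers inside $X^{ss}(L)$ — enters), and $Y$ is quasi-projective, hence separated.

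Because $\pi$ is $G$-invariant, the map $\psi$ factors as
\[
G \times X^s(L) \xrightarrow{\;\Phi\;} X^s(L) \times_Y X^s(L) \xrightarrow{\;j\;} X^s(L) \times X^s(L),
\]
where $\Phi(g,x) = (x, gx)$ and $j$ is the base change of the diagonal $\Delta_Y \colon Y \to Y \times Y$. Since $Y$ is separated, $\Delta_Y$ is a closed immersion, and hence so is $j$. It therefore suffices to show that $\Phi$ is proper; I would in fact argue it is finite. Set-theoretic surjectivity of $\Phi$ is precisely the statement that fibers of $\pi$ are orbits, while the scheme-theoretic fiber of $\Phi$ over a geometric point $(x, gx)$ is the coset $g G_x$, which is finite by the stability hypothesis on $X^s(L)$. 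So $\Phi$ is quasi-finite and surjective.

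To upgrade quasi-finiteness to finiteness I would work étale-locally on $Y$. By Luna's étale slice theorem (cf.\ Remark \ref{rem.etaleslice}), $Y$ admits an étale cover $\{U_\alpha \to Y\}$ together with affine slices $W_\alpha$ and finite subgroups $H_\alpha \subset G$ such that $X^s(L) \times_Y U_\alpha$ is $G$-equivariantly isomorphic to the contracted product $(G \times W_\alpha)/H_\alpha$, with $U_\alpha \cong W_\alpha/H_\alpha$. Over each chart, $\Phi$ pulls back to an explicitly finite morphism of degree $|H_\alpha|$, and since finiteness is étale-local on the target, $\Phi$ itself is finite. Composing the finite morphism $\Phi$ with the closed immersion $j$ yields that $\psi$ is finite, in particular proper — which is precisely the assertion that $G$ acts properly on $X^s(L)$. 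The main obstacle is the third step: converting pointwise information (closed orbits with finite stabilizers — the very definition of stability) into a scheme-theoretic finiteness statement for $\Phi$. The slice theorem packages this neatly, but it is exactly here that one uses the full strength of the stability criterion rather than merely the finiteness of the generic stabilizer.
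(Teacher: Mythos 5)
The paper does not actually prove this statement --- it is quoted verbatim from Mumford--Fogarty--Kirwan with the citation [Converse 1.13] standing in for the argument --- so there is no in-text proof to match yours against; I am judging your argument on its own terms. Your skeleton is the standard and correct one: since $\pi\colon X^s(L)\to Y$ is a $G$-invariant geometric quotient with $Y$ quasi-projective, the action map factors as $\psi = j\circ\Phi$ with $j\colon X^s(L)\times_Y X^s(L)\to X^s(L)\times X^s(L)$ the base change of the diagonal of the separated scheme $Y$, hence a closed immersion, and $\Phi$ surjective and quasi-finite because fibers of $\pi$ are orbits and stabilizers of stable points are finite. Reducing properness of $\psi$ to finiteness of $\Phi$ is exactly the right move.

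The genuine gap is in the last step. Luna's \'etale slice theorem is a characteristic-zero result (it needs the stabilizers to be linearly reductive and uses a $G_x$-equivariant splitting of the tangent space), whereas the theorem as stated in MFK --- and as used in this paper, which works over a general base such as $\Spec\Z$ --- is characteristic-free; MFK's own argument avoids slices entirely and runs through the valuative criterion together with the function-theoretic characterization of stability. So your proof establishes the statement only over a field of characteristic $0$ (and implicitly for $G$ reductive, though that is already implicit in forming $X^s(L)$). Two smaller points even in characteristic $0$: to invoke the slice theorem at every point of $X^s(L)$ you must note that a stable orbit, being closed in $X^{ss}(L)$, is closed in each invariant affine open $X_f$ covering $X^{ss}(L)$, which is where Luna's hypotheses are checked; and the assertion that $\Phi$ pulls back over a chart to a finite morphism of degree $|H_\alpha|$ deserves an actual verification --- one should exhibit $\bigl(G\times_{H_\alpha}W_\alpha\bigr)\times_{W_\alpha/H_\alpha}\bigl(G\times_{H_\alpha}W_\alpha\bigr)$ as a quotient of $G\times G\times\bigl(W_\alpha\times_{W_\alpha/H_\alpha}W_\alpha\bigr)$ and use that $W_\alpha\to W_\alpha/H_\alpha$ is finite, rather than reading the degree off from the fibers, since quasi-finite plus surjective does not imply finite.
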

To define the notion of a stack being proper over $S$ we invoke the
following result. 
\begin{thm} \label{thm.finiteparam} \cite[Theorem 2.7]{EHKV:01}
If ${\mathcal X}$ is a DM stack then there exits a finite surjective
morphism from a scheme $Y \to {\mathcal X}$.
\end{thm}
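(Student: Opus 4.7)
The approach is Noetherian induction on $\dim {\mathcal X}$, after first reducing to the case that ${\mathcal X}$ is reduced and irreducible (the general case follows by handling components separately and passing to the reduction, since a finite surjection onto the underlying reduced stack lifts to one onto ${\mathcal X}$).

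The base case $\dim {\mathcal X} = 0$ is immediate: such an ${\mathcal X}$ is a finite disjoint union of classifying stacks $BG_i$ for finite reduced group schemes $G_i$ over fields, and the canonical maps $\Spec k_i \to BG_i$ are finite and surjective.

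For the inductive step, the idea is to build a finite surjective scheme cover of a dense open substack ${\mathcal U} \subset {\mathcal X}$ and then patch it with the inductive cover of the complement. Choose an \'etale atlas $p \colon U \to {\mathcal X}$ from Definition \ref{def.dmstack}(iii). The fiber $U_\eta \to \Spec\kappa(\eta)$ over the generic point is finite \'etale, so picking a component and taking a Galois closure yields a finite separable extension $L/\kappa(\eta)$. Spreading out this datum produces a dense open substack ${\mathcal U} \subset {\mathcal X}$ together with a finite \'etale surjective representable morphism $V_0 \to {\mathcal U}$ from a scheme. The closed complement ${\mathcal Z} = {\mathcal X} \smallsetminus {\mathcal U}$ is a DM stack of strictly smaller dimension, so by the inductive hypothesis admits a finite surjective morphism $W \to {\mathcal Z}$ from a scheme.

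The main obstacle is that the naive union $V_0 \sqcup W \to {\mathcal X}$ is not finite, because $V_0 \to {\mathcal X}$ is only finite over ${\mathcal U}$ and is not proper. One must therefore extend $V_0$ across ${\mathcal Z}$ to a scheme $V$ with a finite surjection $V \to {\mathcal X}$ whose restriction to ${\mathcal U}$ recovers $V_0$ (or a modification thereof). In the scheme setting this is routine—take the normalization of ${\mathcal X}$ inside $L$—but for a stack ${\mathcal X}$ the normalization is a priori only a stack ${\mathcal X}'$ finite over ${\mathcal X}$, not a scheme. The delicate part of the argument is therefore to arrange, via local structure on ${\mathcal X}$ near points of ${\mathcal Z}$, that ${\mathcal X}'$ (or a further finite cover of it constructed by iterating this procedure and using the inductive hypothesis on its stacky locus) can be represented by a scheme $V$. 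Granting such $V$, the disjoint union $Y = V \sqcup W$ is a scheme with a finite surjective morphism $Y \to {\mathcal X}$, completing the induction.
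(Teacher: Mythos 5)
First, a point of reference: the paper offers no proof of this theorem. It is quoted from \cite[Theorem 2.7]{EHKV:01}, with the remark that the argument there is an adaptation of Seshadri's covering theorem \cite[Theorem 6.1]{Ses:72}; so your attempt has to stand on its own. Its architecture is the standard one --- reduce to the reduced irreducible case, produce a finite \'etale surjective $V_0 \to {\mathcal U}$ over a dense open by spreading out a Galois closure at the generic point, and try to extend across ${\mathcal Z} = {\mathcal X} \smallsetminus {\mathcal U}$ --- and you correctly diagnose both why $V_0 \sqcup W$ fails to be finite over ${\mathcal X}$ and why the relative normalization ${\mathcal X}'$ of ${\mathcal X}$ in $V_0$ is a priori only a stack. (Two small inaccuracies that do not affect the outcome: a reduced zero-dimensional DM stack is a finite union of residual gerbes over fields, which need not be trivial gerbes $BG_i$, though each still receives a finite \'etale surjection from the spectrum of a finite field extension; and the generic ``point'' over which you take the fiber of the atlas is itself a gerbe, not $\Spec \kappa(\eta)$.)

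The genuine gap is the sentence ``Granting such $V$'': it grants exactly what the theorem asserts. All the content of the statement is concentrated in showing that ${\mathcal X}'$, or some further finite surjective cover of it, is a scheme, and the mechanism you gesture at does not deliver this. The inductive hypothesis applies to the proper closed substack ${\mathcal Z} \subset {\mathcal X}$ and yields a finite scheme cover $W \to {\mathcal Z}$; but the stacky locus of ${\mathcal X}'$ is a closed substack of ${\mathcal X}'$, not of ${\mathcal X}$, and ${\mathcal X}'$ has the same dimension as ${\mathcal X}$, so ``iterating the procedure'' on ${\mathcal X}'$ is circular rather than an induction. Nor does normality help: a normal DM stack that is generically a scheme can be stacky in codimension one (e.g.\ $[\A^1/\mu_n]$ with $\mu_n$ acting by scaling), so there is no soft reason for ${\mathcal X}'$ to become a scheme near ${\mathcal Z}$. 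Finally, even if one produces a finite scheme cover $T$ of the stacky locus of ${\mathcal X}'$, there is no gluing that combines $T$ with the scheme locus of ${\mathcal X}'$ into something finite over ${\mathcal X}'$ --- that is the same extension problem one level up. This missing step is precisely where \cite{EHKV:01}, following Seshadri, does real work, exploiting the local quotient structure $[U/H]$ with $H$ finite (for which $U \to [U/H]$ is already a finite \'etale scheme cover) and globalizing such local covers; until you supply an actual construction of $V$, the argument is an outline of the difficulty rather than a proof.
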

\begin{remark}
When ${\mathcal X} = [X/G]$ is a quotient stack then Theorem
\ref{thm.finiteparam} implies that there exists a finite surjective
map $X' \to X$ such that $G$ acts freely on $X'$ and $X' \to Y$ is a
$G$-torsor. This consequence was originally proved by Seshadri 
\cite[Theorem 6.1]{Ses:72}
and the proof of Theorem \ref{thm.finiteparam} is an adaptation of
Seshadri's argument to stacks.
\end{remark}

\begin{defi} \label{def.proper}
A separated DM stack ${\mathcal X}$ is {\em proper} over $S$ if there
exists a finite surjective morphism from a scheme $Y \to {\mathcal X}$
with $Y$ proper over $S$.
\end{defi}

\begin{remark}
As is the case for morphisms of schemes there are valuative criteria
for separation and properness of morphisms \cite[Theorem
4.18, 4.19]{DeMu:69}. We do not state them here, but Deligne and
Mumford used the valuative criteria together with the stable reduction
theorem \cite[Corollary 2.7]{DeMu:69} to prove the following theorem.
\begin{thm} \label{thm.Mgbarisproper}
The DM stack $\Mgbar$ is proper over $\Spec \Z$.
\end{thm}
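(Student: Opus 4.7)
The plan is to invoke the valuative criteria for separation and properness of Deligne-Mumford stacks and reduce each to a statement about families of stable curves. By Proposition \ref{prop.mgisdm} we already know $\Mgbar$ is a DM stack, and since $\Mgbar = [\Hbar_g/\PGL_{5g-5}]$ with $\Hbar_g$ a locally closed subscheme of a Hilbert scheme, it is quasi-compact and of finite type over $\Spec\Z$. The remaining content is therefore separation of the diagonal and the existence of limits.

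First I would verify separation. Let $R$ be a discrete valuation ring with fraction field $K$, and suppose we are given two objects of $\Mgbar(\Spec R)$, namely families $\pi_i\colon X_i \to \Spec R$ of stable curves for $i = 1,2$, together with an isomorphism $\phi_K\colon X_{1,K} \to X_{2,K}$ of their generic fibers. I would show $\phi_K$ extends uniquely to an isomorphism $\phi\colon X_1 \to X_2$ over $R$. The key input is that $\omega_{X_i/R}^{\otimes 3}$ is relatively very ample, so both families embed canonically into the projective bundle $\Pro(\pi_{i*}\omega_{X_i/R}^{\otimes 3})$. The generic isomorphism $\phi_K$ is induced by a uniquely determined isomorphism of these projective bundles over $K$, and since $\pi_{i*}\omega_{X_i/R}^{\otimes 3}$ is locally free over the DVR $R$, this isomorphism extends uniquely over $\Spec R$ and carries $X_1$ to $X_2$. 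Equivalently, one may take the closure $\Gamma$ of the graph of $\phi_K$ in $X_1 \times_R X_2$ and use stability of the special fibers to show $\Gamma$ projects isomorphically to each factor.

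Next I would verify the valuative criterion for properness. Given a DVR $R$ with fraction field $K$ and a family $X_K \to \Spec K$ of stable curves of genus $g$, I need to produce a finite extension $R \to R'$ of DVRs (inducing a finite extension $K \to K'$) and an object of $\Mgbar(\Spec R')$ whose restriction to $\Spec K'$ is $X_K \times_K K'$. This is precisely the content of the stable reduction theorem of Deligne and Mumford \cite[Corollary 2.7]{DeMu:69}, together with the fact that the stable limit is uniquely determined (which follows from the separation argument above).

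To match Definition \ref{def.proper}, I would apply Theorem \ref{thm.finiteparam} to obtain a finite surjective morphism $Y \to \Mgbar$ from a scheme $Y$. Since $\Mgbar$ is separated and satisfies the valuative criterion for properness, these properties pull back along the finite morphism $Y \to \Mgbar$ to show $Y$ is separated and universally closed over $\Spec\Z$; being also of finite type, $Y$ is proper. The main obstacle is of course the stable reduction theorem itself, which we are permitted to assume; the rest amounts to standard valuative-criterion bookkeeping, with the mildly technical point being the equivalence between Definition \ref{def.proper} and the valuative criterion for DM stacks of finite type over a Noetherian base.
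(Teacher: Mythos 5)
Your proposal is correct and follows essentially the same route as the paper, which simply notes that Deligne and Mumford proved this theorem via the valuative criteria for separation and properness combined with the stable reduction theorem \cite[Corollary 2.7]{DeMu:69}. You have filled in more of the details (uniqueness of stable limits via the tricanonical embedding, and the bookkeeping needed to match Definition \ref{def.proper}), but the underlying argument is the one the paper cites.
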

\end{remark}

\subsection{Coarse moduli spaces of Deligne-Mumford stacks} \label{subsec.DMcoarsemoduli}
The coarse moduli space of a DM stack is a space whose geometric
points correspond to isomorphism classes of objects over the
corresponding algebraically closed field. Before we give a definition
we feel obliged to issue a warning.
\begin{warning}
Given a stack ${\mathcal X}$ such as $\Mgbar$ one can define the {\em
coarse moduli functor}. This is the contravariant functor $F_{\mathcal X}
\colon \sS \to \Sets$ which assigns to any scheme $T$ the set of
isomorphism classes in the category ${\mathcal
  X}(T)$. {\bf
The coarse moduli space of ${\mathcal X}$ does not represent the coarse moduli functor
$F_{\mathcal X}$.} For one thing the functor $F_{\mathcal X}$ is not in general
a sheaf in \'etale topology. The reason is that
two objects of a stack may become
isomorphic after base change to an \'etale cover. For example, a trivial and
iso-trivial family of curves are not isomorphic but become so after
\'etale base change. For $F_{\mathcal X}$ to be represented by a
scheme or algebraic space it would have to be an \'etale sheaf. One
may attempt to replace $F_{\mathcal X}$ by its
associated sheaf in the \'etale topology. However, the associated sheaf
need not have an \'etale cover by a scheme.
\end{warning}

\begin{defi}
Let ${\mathcal X}$ be a separated DM stack. A scheme $M$ is a coarse
moduli scheme for ${\mathcal X}$ if there is a map $p\colon {\mathcal X} \to
M$ such that 

i) $p$ is universal for maps from ${\mathcal X}$ to schemes; i.e.. given
a morphism $q\colon {\mathcal X} \to Z$ with $Z$ a scheme, there is a unique
morphism $f\colon M \to Z$ such that $f \circ p = q$.

ii) If $K$ is an algebraically closed field then 
there is a bijection between the
points of $M(K)$ and the set of isomorphism classes of objects in the
groupoid ${\mathcal X}(K)$.

\end{defi}

\begin{prop} \label{prop.geoquotismoduli}
Let $G$ be an algebraic group acting on a scheme $X$.  If
$X \to M$ is a geometric quotient in the sense of \cite[Definition
0.6]{MFK:94} then $M$ is a coarse moduli scheme for ${\mathcal X}=[X/G]$.
\end{prop}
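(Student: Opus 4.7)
The plan is to construct a morphism of CFGs $p : [X/G] \to \underline{M}$ and then verify both the universal property and the bijection on geometric points.

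\textbf{Construction of $p$.} Let $\pi : X \to M$ denote the given geometric quotient. Given an object $(E \to T, f : E \to X)$ of $[X/G](T)$, the composition $\pi \circ f : E \to M$ is $G$-invariant since $\pi$ is $G$-invariant and $f$ is $G$-equivariant. Because $E \to T$ is a $G$-torsor it is faithfully flat, and $G$-invariant morphisms out of $E$ descend uniquely to $T$, producing a morphism $\bar{f} : T \to M$. Uniqueness of descent together with the functoriality of $[X/G]$ makes this assignment into a functor $p$ over $\sS$.

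\textbf{Universal property.} Given a morphism $q : [X/G] \to \underline{Z}$ with $Z$ a scheme, let $\tau : X \to [X/G]$ be the tautological morphism corresponding to the trivial torsor $G \times X \to X$ together with the action map, and set $q_X := q \circ \tau : X \to Z$. Pulling $\tau$ back along the two natural morphisms $\mathrm{pr}_2, \sigma : G \times X \to X$ (projection and action) yields isomorphic objects of $[X/G](G \times X)$, hence by functoriality $q_X \circ \mathrm{pr}_2 = q_X \circ \sigma$; that is, $q_X$ is $G$-invariant. Since a geometric quotient is a categorical quotient by \cite[Proposition 0.1]{MFK:94}, $q_X$ factors uniquely as $\bar{q} \circ \pi$. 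For an arbitrary object $(E,f) \in [X/G](T)$ the square
$$
\begin{array}{ccc}
E & \stackrel{f}\to & X \\
\downarrow & & \tau\downarrow \\
T & \to & [X/G]
\end{array}
$$
is 2-cartesian, so functoriality of $q$ gives $q(T)\circ(E\to T) = q_X \circ f = \bar{q}\circ \pi \circ f = \bar{q}\circ p(T) \circ (E\to T)$. Faithful flatness of $E \to T$ forces $q(T) = \bar{q}\circ p(T)$, and uniqueness of $\bar{q}$ follows from the categorical quotient property applied to maps out of $X$.

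\textbf{Bijection on geometric points.} For an algebraically closed field $K$, the sketch in the proof of Theorem \ref{thm.cfgisdmstack} shows that isomorphism classes in the groupoid $[X/G](\Spec K)$ correspond bijectively to $G(K)$-orbits in $X(K)$. By the defining condition of a geometric quotient \cite[Definition 0.6]{MFK:94}, the geometric fibers of $\pi$ over $M(K)$ are exactly these orbits, producing the desired bijection.

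\textbf{Main obstacle.} The subtlest step is the verification of the universal property on an arbitrary test scheme $T$: one must transfer the equation $q_X = \bar{q}\circ\pi$, valid only on $X$, to the equation $q(T) = \bar{q}\circ p(T)$ at an arbitrary $T$. This passage relies on the 2-cartesian diagram linking $\tau$ to $(E,f)$ together with faithfully flat descent along the torsor $E \to T$. Even establishing the $G$-invariance of $q_X$ at the outset requires carefully identifying two a priori distinct objects of $[X/G](G\times X)$, so diagrammatic bookkeeping with the 2-categorical structure is where the real work sits.
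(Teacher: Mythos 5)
Your argument is correct and follows the same route as the paper's proof: the paper likewise reduces condition (ii) to the correspondence between isomorphism classes in $[X/G](\Spec K)$ and $G$-orbits, and condition (i) to the facts that morphisms $[X/G] \to Z$ correspond to $G$-invariant morphisms $X \to Z$ and that a geometric quotient is a categorical quotient. You have simply filled in the descent and $2$-cartesian bookkeeping that the paper leaves implicit.
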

\begin{proof} By definition, geometric points of $M$ correspond to
  $G$-orbits in $X$. Since $G$-orbits in $X$ correspond to isomorphism
  classes of objects in ${\mathcal X}$ condition ii) is satisfied.
To give a map $[X/G] \to Y$ is equivalent to giving a $G$-invariant
map $X \to Y$. By \cite[Proposition 0.6]{MFK:94} a geometric quotient
is a categorical quotient which implies that $p$ satisfies condition
i).
\end{proof}

In general a DM stack need not have a coarse moduli scheme, but the
universal property guarantees that $M$ is unique if it exists.
Keel and Mori \cite{KeMo:97} proved that every Artin stack 
with finite stabilizer
\footnote{An algebraic stack ${\mathcal X}$ 
has {\em finite stabilizer} if the representable morphism
$I_{\mathcal X} \to {\mathcal X}$ is finite, where $I_{\mathcal X} =
{\mathcal X} \times_{{\mathcal X} \times {\mathcal X}} {\mathcal X}$
is the inertia stack. For quotient stacks this condition is equivalent
to the requirement that the map $I_G(X) \to X$ is finite where $I_G(X) =
\{(g,x)|gx =x\}$. Any separated stack has finite stabilizer but
not every stack with finite stabilizer is separated.} 
has a coarse moduli space in the category
of algebraic spaces. Their result implies the following theorem for
Deligne-Mumford stacks.
\begin{thm} \cite{KeMo:97, Con:04}
Given a separated DM stack ${\mathcal X}$ there
exists
a separated algebraic space $M$ and a map $p \colon {\mathcal X} \to M$ such that:

i) for every algebraically closed field $K$, $p$ induces a bijection
between the set $M(K)$ and the set of isomorphism classes in ${\mathcal X}(K)$

ii) $p$ is universal for maps to algebraic spaces.

Moreover, if ${\mathcal X}$ is proper over $S$ then so is $M$.
\end{thm}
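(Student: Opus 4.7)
The plan is to construct $M$ by first building it étale-locally as the quotient of an affine scheme by a finite group, and then gluing these local pieces into an algebraic space. By Theorem \ref{thm.finiteparam} there is a finite surjective morphism $Y \to \mathcal{X}$ from a scheme $Y$. Set $R = Y \times_{\mathcal{X}} Y$. The two projections $R \to Y$ are finite (being base changes of $Y \to \mathcal{X}$), and the morphism $R \to Y \times_S Y$ is finite because $\mathcal{X}$ is separated, so its diagonal is finite by Proposition \ref{prop.diagunram} together with the separation hypothesis. Thus $R \rightrightarrows Y$ is a finite equivalence relation on the scheme $Y$, and the candidate coarse space is the quotient $M = Y/R$ in the category of algebraic spaces.

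To construct this quotient, I would work étale-locally. The first key step is to show that around each geometric point $x \in \mathcal{X}(K)$ there is an étale neighborhood of the form $[U/G_x] \to \mathcal{X}$, where $U$ is affine and $G_x = \Aut(x)$ is the (finite, reduced) automorphism group; this is a Luna-type slice theorem for DM stacks, and one can derive it from Remark \ref{rem.etaleslice} applied to the quotient presentations obtained from the finite cover $Y \to \mathcal{X}$. For such a local model with $U = \Spec A$, one sets $M_x = \Spec A^{G_x}$. Classical invariant theory shows that $U \to \Spec A^{G_x}$ is finite, that the geometric points of $\Spec A^{G_x}$ are the $G_x$-orbits in $U(K)$, and that $\Spec A^{G_x}$ is universal for $G_x$-invariant maps from $U$ to schemes. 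Hence $[U/G_x] \to M_x$ is a coarse moduli map in the local setting.

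The gluing step is where the bulk of the work lies: one must check that when two étale neighborhoods $[U_1/G_1]$ and $[U_2/G_2]$ meet, the induced maps to $M_1$ and $M_2$ agree on overlaps and that the local coarse spaces assemble into an étale equivalence relation on a scheme, producing an algebraic space $M$. Equivalently, one verifies that $Y \to M$ is a universal geometric quotient by checking this on the slice charts. The universal property (ii) for arbitrary algebraic spaces then follows from the universality on charts plus descent, and the bijection (i) on geometric points follows from the orbit description on each chart. Finally, if $\mathcal{X}$ is proper over $S$ then Definition \ref{def.proper} furnishes a finite surjection $Y' \to \mathcal{X}$ with $Y'$ proper; since $Y' \to M$ is then surjective and proper, and $M$ was constructed as separated, properness of $M$ follows by descent of universal closedness.

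The main obstacle is the étale slice theorem for DM stacks (paragraph 2) and the subsequent gluing (paragraph 3): verifying that the invariant-theoretic local quotients $\Spec A^{G_x}$ patch consistently under étale base change amounts to showing that the formation of $A^{G_x}$ commutes with étale base change on $A^{G_x}$, which is the technical heart of the Keel--Mori construction. Once one has this, properness and the universal property are comparatively formal.
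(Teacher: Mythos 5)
The paper does not actually prove this theorem: it is quoted verbatim from Keel--Mori and Conrad, with the surrounding text explicitly deferring to \cite{KeMo:97, Con:04}. So your sketch has to be judged against the genuine Keel--Mori argument. Your global strategy is the right one (finite cover $Y \to {\mathcal X}$, local quotients by finite groups, gluing controlled by the fact that formation of invariants commutes with flat base change on the invariant ring), and you correctly locate the technical heart of the construction. Nevertheless there are real gaps.

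First, your source for the local charts $[U/G_x] \to {\mathcal X}$ is Remark \ref{rem.etaleslice}, i.e.\ Luna's \'etale slice theorem. That remark applies only in characteristic $0$, to a reductive group acting on a smooth (or normal) variety covered by invariant affine opens; the theorem at hand concerns an arbitrary separated DM stack over an arbitrary base (the paper applies it to $\Mgbar$ over $\Spec \Z$), and such a stack need not be a global quotient by a linear algebraic group at all. Worse, within this paper the local quotient structure (Theorem \ref{thm.localstr}) is itself quoted as a consequence of Keel--Mori, so invoking it is circular; the correct route is a direct argument (henselize an \'etale atlas at a point and let the finite automorphism group permute the sheets of $U \times_{\mathcal X} U$), or Keel--Mori's device of finite flat quasi-sections of the groupoid. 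Second, $R = Y \times_{\mathcal X} Y \rightrightarrows Y$ is a finite \emph{groupoid}, not an equivalence relation: $R \to Y \times_S Y$ is finite but is not a monomorphism, because objects of ${\mathcal X}$ have nontrivial automorphisms. Quotients of finite equivalence relations and coarse quotients of finite groupoids are genuinely different problems, and the latter is precisely the content of Keel--Mori, so this is not merely a terminological slip. Finally, the lemma you defer --- that $A \mapsto A^{G}$ commutes with flat base change on $\Spec A^{G}$, which is what makes the charts glue and yields the universal property for maps to algebraic spaces --- is asserted rather than proved; since you yourself identify it as the heart of the matter, what you have is an outline of the known proof with its central step and its local input both missing.
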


\begin{defi}
The algebraic space $M$ associated to the separated DM stack ${\mathcal
  X}$ is called a coarse moduli space. Again, the universal property
implies that $M$ is unique up to isomorphism.
\end{defi}

Since the universal property for maps to algebraic spaces is a stronger
than the universal property for maps to schemes, it is a priori possible for a
DM stack to have an algebraic space as a coarse moduli space and a
different scheme as a coarse moduli scheme. Fortunately, this
does not occur for separated DM stacks.
\begin{prop} (cf. \cite[Proposition 9.1]{KeMo:97})
If $M$ is a coarse moduli scheme for a separated DM stack ${\mathcal
  X}$
then  $M$ is a coarse moduli space.
\end{prop}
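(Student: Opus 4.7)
The plan is to reduce the statement to showing that the canonical comparison map between $M$ and the Keel--Mori coarse moduli algebraic space is an isomorphism. By the Keel--Mori theorem invoked immediately above, $\mathcal{X}$ admits a coarse moduli algebraic space $p'\colon \mathcal{X} \to M'$. Since $M$ is in particular an algebraic space, the universal property of $M'$ (for maps to algebraic spaces) produces a unique morphism $\phi\colon M' \to M$ with $\phi \circ p' = p$. Once I show $\phi$ is an isomorphism of algebraic spaces, the universal property enjoyed by $M'$ transfers to $M$, which is exactly the content of the proposition.

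To see $\phi$ is an isomorphism I first note that it is bijective on geometric points, because both $p$ and $p'$ biject geometric points with isomorphism classes of objects of $\mathcal{X}(K)$ for each algebraically closed $K$. Next I pick an \'etale surjection $U \to M'$ from a scheme $U$ and set $R = U \times_{M'} U$, so that $M' = U/R$ as algebraic spaces. Since (by Keel--Mori) the formation of the coarse moduli space commutes with flat base change on $M'$, the base change $\mathcal{X}_U := \mathcal{X} \times_{M'} U \to U$ is a coarse moduli space; because $U$ is a scheme, it is also a coarse moduli scheme for $\mathcal{X}_U$. Then the composite $\mathcal{X}_U \to \mathcal{X} \to M$ is a map to a scheme, so the universal property of the coarse moduli scheme $U$ of $\mathcal{X}_U$ gives a unique morphism of schemes $U \to M$, and by uniqueness this must agree with the already-constructed composition $U \to M' \xrightarrow{\phi} M$.

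The heart of the argument is showing that the morphism $U \to M$ just obtained is \'etale. For this I invoke the local structure from Keel--Mori: after possibly shrinking $U$, the map $\mathcal{X}_U \to U$ is of the form $[\mathrm{Spec}\, A / G] \to \mathrm{Spec}\, A^G$ for some finite group $G$ acting on the affine scheme $\mathrm{Spec}\, A$, and the coarse moduli scheme in this finite-quotient situation is transparently $\mathrm{Spec}\, A^G$. Applying the same local-quotient picture at a point of $M$ under $\phi(u)$ and using the universal property of $M$ against these local charts identifies $U \to M$ locally with an \'etale morphism of such invariant-ring schemes. Since \'etaleness is local on the source for an \'etale cover $U \to M'$, it follows that $\phi\colon M' \to M$ is \'etale; combined with bijectivity on geometric points and the separatedness of both sides (inherited from $\mathcal{X}$), $\phi$ is an isomorphism of algebraic spaces, completing the proof.

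The main obstacle is this last step: verifying that the locally constructed map $U \to M$ is \'etale, which is where the argument depends most heavily on the fine local structure of coarse moduli spaces supplied by Keel--Mori rather than on formal universal-property manipulations. Everything upstream (the existence of $\phi$, bijectivity on points, and the reduction to checking \'etaleness on an atlas) is essentially formal, but without the local quotient description there is no direct route from ``universal for maps to schemes'' to ``universal for maps to algebraic spaces''.
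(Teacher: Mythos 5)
The paper itself gives no proof of this proposition; it simply defers to \cite[Proposition 9.1]{KeMo:97}. So your proposal must be judged on its own merits. Your overall strategy --- produce the canonical map $\phi\colon M' \to M$ from the Keel--Mori coarse space $M'$ via its universal property for maps to algebraic spaces, and then show $\phi$ is an isomorphism --- is the right one, and the formal pieces (existence and uniqueness of $\phi$, bijectivity on geometric points, reduction to an \'etale atlas) are fine. But the step you yourself identify as the heart of the argument is where it breaks: to show $U \to M$ is \'etale you ``apply the same local-quotient picture at a point of $M$.'' No such local picture is available for $M$. The local structure theorem ($[\Spec A/G] \to \Spec A^G$ with $\Spec A^G$ an \'etale chart) is a theorem about the Keel--Mori space $M'$, not about an arbitrary scheme satisfying the coarse-moduli-\emph{scheme} axioms; transferring it to $M$ presupposes $\phi$ is an isomorphism, which is what you are proving. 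Nor can you ``use the universal property of $M$ against these local charts'': the universal property of $M$ is a global statement about maps from all of ${\mathcal X}$ to schemes, and it does not localize --- there is no a priori reason that $p^{-1}(V)$ has $V$ as its coarse moduli scheme for $V \subset M$ open. So the \'etaleness of $\phi$ is unsubstantiated and the argument is circular at exactly the point where it needs content.

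There is a repair that bypasses \'etaleness entirely and is closer to the standard argument. The map $\phi \colon M' \to M$ is locally of finite type (since $M'$ is locally of finite type over $S$) and is injective on geometric points, hence quasi-finite; it is also separated because $M'$ is separated over $S$. A quasi-finite separated morphism from an algebraic space to a scheme is schematic, so $M'$ is a \emph{scheme}. Once $M'$ is known to be a scheme, everything is formal: the universal property of $M$ for maps to schemes yields $\psi\colon M \to M'$ with $\psi \circ p = p'$, and the uniqueness clauses in the two universal properties force $\phi\circ\psi = \mathrm{id}_M$ and $\psi\circ\phi = \mathrm{id}_{M'}$. In particular your closing remark --- that there is no route from ``universal for schemes'' to ``universal for algebraic spaces'' without the fine local quotient structure --- is not quite right; what is really needed is the schematic nature of quasi-finite separated maps, after which the statement is a pure universal-property manipulation.
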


\begin{example}[Pathologies of non-separated stacks]
If we relax the separation hypothesis then  a DM stack may have a
coarse moduli space which is not isomorphic to its coarse moduli
scheme.
This is essentially
the phenomenon of Example 0.4 of \cite{MFK:94}. In that example
Mumford shows that $\A^1$ is the geometric quotient by $\SL_2$ of an
open set $X \subset \A^5$.  
The action of $\SL_2$ is defined in such a way that it is free but not proper. In this case
$\A^1$ is the coarse moduli scheme of the non-separated stack
$[X/\SL_2]$. Since the action is free, the stack $[X/\SL_2]$ is in
fact a non-separated algebraic space and so is its own coarse moduli space. By
restricting our focus to {\em separated} stacks we avoid this
difficulty. 
\end{example}

\subsubsection{Singularities of coarse moduli spaces}
It is well known that coarse moduli spaces of smooth stacks, 
like $\Mg$ and $\Mgbar$,
have quotient singularities. Using the language of stacks we
can make this precise.
\begin{thm} \label{thm.localstr}
Let ${\mathcal X}$ be a DM stack with coarse moduli space $M$. Then
for every point $m$ of $M$ there is an affine scheme $U$, a finite
group $H$, a representable \'etale morphism $[U/H] \to {\mathcal X}$
and a cartesian
diagram
$$\begin{array}{ccc}
[U/H] & \to & {\mathcal X}\\
\downarrow & & \downarrow \\
U/H & \to &  M
\end{array}
$$
such that the image of $U/H$ in $M$ contains $m$.
\end{thm}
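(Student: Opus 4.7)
The plan is to reduce to a finite group action by exploiting an \'etale presentation of $\mathcal{X}$ and ``concentrating'' it near a chosen preimage of $m$ so that the automorphism group $H$ at that point acts on a slice. The overall structure mirrors the Deligne--Mumford construction of \'etale atlases with prescribed isotropy.

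First I would fix a geometric point $\xi \colon \Spec K \to \mathcal{X}$ lying over $m$ and set $H = \Aut(\xi)$, which is finite and reduced by Proposition \ref{prop.diagunram}. By Definition \ref{def.dmstack}(iii), there is an \'etale surjection $p \colon V \to \mathcal{X}$ from a scheme $V$, and I would lift $\xi$ to a point $v \in V$. Since $\mathcal{X}$ is separated, the diagonal $\mathcal{X} \to \mathcal{X}\times\mathcal{X}$ is finite (proper and unramified), so the scheme $R = V \times_{\mathcal{X}} V$ is finite over $V \times V$, and the two projections $s,t \colon R \to V$ are \'etale. This groupoid $R \rightrightarrows V$ presents $\mathcal{X}$, and the fiber $s^{-1}(v)$ consists of exactly $|H|$ geometric points, canonically indexed by $H$.

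Next I would construct an affine \'etale neighborhood of $v$ carrying an honest $H$-action. Because $s$ is \'etale and $s^{-1}(v)$ splits into $|H|$ sections over $K$, after replacing $V$ with a small enough affine \'etale neighborhood of $v$ one may decompose $s^{-1}(V) \subset R$ into a disjoint union of $|H|$ sections of $s$. Composing each section with $t$ produces $|H|$ \'etale self-maps of $V$. By shrinking to a common $H$-invariant affine open $U \subset V$ on which all $|H|$ maps are defined and land in $U$, the groupoid law on $R$ translates into associativity, so these maps define a genuine action of $H$ on $U$. The composite $U \to V \to \mathcal{X}$ is then $H$-invariant and factors through a morphism $[U/H] \to \mathcal{X}$. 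Representability of this morphism holds because the stabilizer of a point of $U$ agrees with the automorphism group of its image in $\mathcal{X}$, and it is \'etale because $U \to [U/H]$ is an $H$-torsor while $U \to \mathcal{X}$ is \'etale by construction.

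Finally, I would form the affine quotient $U/H$ (which exists since $H$ is finite acting on an affine scheme) and use the universal property of the coarse moduli space applied to the composite $[U/H] \to \mathcal{X} \to M$ to obtain a map $U/H \to M$ whose image contains $m$. Invoking the Keel--Mori theorem \cite{KeMo:97}, which guarantees that formation of coarse moduli spaces of separated DM stacks commutes with flat base change on the moduli space side, one deduces $[U/H] \cong \mathcal{X} \times_M (U/H)$, yielding the required cartesian diagram. The main obstacle I expect is the strictification in the middle step: while $H$ acts tautologically on the fiber $s^{-1}(v)$, arranging the chosen sections of $s$ to close up under the groupoid composition on a genuine \'etale neighborhood of $v$ requires careful shrinking, and this passage from an \'etale groupoid to an actual finite group quotient is the technical heart of the local structure theorem.
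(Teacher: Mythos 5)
Your proposal takes a genuinely different route from the paper, because the paper does not prove this theorem at all: it is quoted as a special case of \cite[Proposition 4.2]{KeMo:97}, with the remark that in characteristic $0$, for $[X/G]$ with $X$ smooth and covered by invariant affines, it also follows from Luna's \'etale slice theorem (Remark \ref{rem.etaleslice}). What you sketch is essentially the Keel--Mori argument itself: present $\mathcal{X}$ by the groupoid $R = V\times_{\mathcal{X}}V\rightrightarrows V$, identify the fibre of $s$ over a lift $v$ of a geometric point $\xi$ with $H=\Aut(\xi)$ (which is finite and reduced by Proposition \ref{prop.diagunram}), split $s$ into $|H|$ sections near $v$, and strictify them into an honest $H$-action on an affine $U$. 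That skeleton is right, and you correctly flag the strictification as delicate. Two smaller caveats: the claim that $s^{-1}(v)$ has exactly $|H|$ points presupposes a preliminary shrinking of $V$ so that $v$ is the \emph{only} preimage of $\xi$ (possible because the fibre of $V\to\mathcal{X}$ over $\xi$ is finite), and the separatedness of $\mathcal{X}$, which you use to make the diagonal finite, is implicit in the paper's definition of a coarse moduli space rather than in the statement.

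The genuine gap is the final step. You deduce the cartesian square from ``coarse moduli formation commutes with flat base change on $M$,'' but to invoke that you would need to know that $U/H\to M$ is flat, which has not been established and is essentially equivalent to what must be proved: one has to show $U/H\to M$ is \'etale near the image of $v$, and this is where the real content of \cite[Proposition 4.2]{KeMo:97} lies. Moreover, \'etaleness and representability of $[U/H]\to\mathcal{X}$ alone do not imply the square is cartesian: the atlas $\mathbb{A}^1\to[\mathbb{A}^1/(\Z/2)]$ (sign action) is \'etale and representable with $H$ trivial, yet the induced map $\mathbb{A}^1\to\mathbb{A}^1$, $x\mapsto x^2$, is not \'etale at the origin and the square fails to be cartesian there. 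What is missing is that $[U/H]\to\mathcal{X}$ be \emph{stabilizer-preserving} --- an isomorphism, not merely an injection, on automorphism groups at \emph{every} point of $U$, not only at $v$. Your construction guarantees this only at the distinguished point; to get it on all of $U$ one must shrink further, discarding the closed locus where $R|_U$ has points outside the graphs of the $\sigma_h$ over the diagonal (this locus is closed precisely because $R\to V\times V$ is finite, i.e., because $\mathcal{X}$ is separated). Only with the stabilizer-preserving property in hand can one show that $U/H\to M$ is \'etale and that $[U/H]\to\mathcal{X}\times_M(U/H)$ is an isomorphism; even then the latter identification requires an argument, since two stacks with a map over a common coarse space need not agree.
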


\begin{remark}
The result stated here is a special case of \cite[Proposition
4.2]{KeMo:97}.
In characteristic 0 if ${\mathcal X} = [X/G]$ with $X$ smooth, $G$ reductive and $X$
covered by
$G$-invariant affine open sets then Theorem \ref{thm.localstr} follows
from the \'etale slice theorem (cf. Remark
\ref{rem.etaleslice}). 
\end{remark}

A corollary of Theorem \ref{thm.localstr} is the converse to
Proposition \ref{prop.geoquotismoduli}
\begin{cor} \label{cor.moduliisgeoquot}
If ${\mathcal X} = [X/G]$ is a quotient DM and
${\mathcal X} \to M$ is a coarse moduli scheme then $M$ is a geometric
quotient of $X$ by the action of $G$.
\end{cor}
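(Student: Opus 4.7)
The plan is to verify the four conditions defining a geometric quotient in \cite[Definition 0.6]{MFK:94} for the canonical composition $\pi \colon X \to \mathcal X \to M$, reducing the nontrivial conditions to the classical case of a finite group acting on an affine scheme by way of Theorem \ref{thm.localstr}. The map $\pi$ is $G$-invariant since it factors through $\mathcal X = [X/G]$. To identify the geometric fibers of $\pi$ with $G$-orbits and establish surjectivity on geometric points, I would use that every $G$-torsor over an algebraically closed field $K$ is trivial, so objects of $\mathcal X(\Spec K)$ correspond to $G$-orbits in $X(K)$; the defining bijection $M(K) \leftrightarrow \{\text{isomorphism classes in } \mathcal X(K)\}$ then identifies $M(K)$ with $X(K)/G$.

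The remaining two conditions---submersiveness of $\pi$ and the structure-sheaf identity $\mathcal O_M = (\pi_* \mathcal O_X)^G$---are \'etale-local on $M$, so I would verify them after pulling back along an \'etale cover. For $m \in M$, Theorem \ref{thm.localstr} supplies an affine scheme $U$, a finite group $H$, and a representable \'etale morphism $[U/H] \to \mathcal X$ whose coarse space $U/H \to M$ is \'etale and meets $m$. Setting $V = X \times_{\mathcal X} [U/H]$ gives a scheme carrying a $G$-action such that $V \to X$ is \'etale and $V \to [U/H]$ is a $G$-torsor. The auxiliary scheme $W = V \times_{[U/H]} U$ then carries commuting free $G$- and $H$-actions, with $W \to V$ an $H$-torsor and $W \to U$ a $G$-torsor.

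The key local step is to conclude that $V \to U/H$ is a geometric quotient of $V$ by $G$. Here $U \to U/H$ is the classical geometric quotient by the finite group $H$ acting on the affine scheme $U$; composing with the $G$-torsor $W \to U$ exhibits $W \to U/H$ as a geometric quotient by $G \times H$, and faithfully flat descent along the $H$-torsor $W \to V$ then transfers both the submersive property and the invariant-sheaf identity from $W \to U/H$ to $V \to U/H$. Assembling the local statements as $U/H$ varies over an \'etale cover of $M$ yields the global conclusion that $X \to M$ is a geometric quotient. I expect the main technical point to be packaging the descent argument so that the identity $\mathcal O_{U/H} = (\pi_* \mathcal O_V)^G$ can be read off from the corresponding identity on $W$ and is compatible across overlapping \'etale charts.
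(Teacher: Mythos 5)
Your proposal is correct and takes essentially the same approach as the paper: the paper states this result as an unproved consequence of Theorem \ref{thm.localstr}, and your argument is precisely that intended reduction, verifying the geometric-quotient axioms of \cite[Definition 0.6]{MFK:94} \'etale-locally on $M$ via the charts $[U/H]\to{\mathcal X}$ and descent along the torsors $W\to U$ and $W\to V$. The one bookkeeping point worth making explicit is that $V=X\times_{\mathcal X}[U/H]$ coincides with $X\times_M(U/H)$ because the square in Theorem \ref{thm.localstr} is cartesian, so your local verifications really do compute the base change of $X\to M$ along the \'etale cover.
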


\subsubsection{Moduli spaces of curves}
The following is well known result was originally proved by Mumford and
Knudsen.
\begin{thm} \label{thm.mgbarprojective}
If $g \geq 2$ then the DM stack ${\Mgbar}$ has a coarse moduli scheme $\MMbar_g$ which is
a projective variety. 
\end{thm}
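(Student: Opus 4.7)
The plan is to exploit the presentation $\Mgbar \simeq [\Hbar_g/\PGL_{5g-5}]$ from Proposition \ref{prop.mgbarisquot} and construct a projective geometric quotient of $\Hbar_g$ by the $\PGL_{5g-5}$-action using Geometric Invariant Theory. Once we have such a geometric quotient $\MMbar_g$, Proposition \ref{prop.geoquotismoduli} and the uniqueness part of Corollary \ref{cor.moduliisgeoquot} identify it with the coarse moduli scheme of $\Mgbar$. Since GIT produces projective schemes, this will give the projectivity.

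More concretely, I would first recall that the Hilbert scheme $H$ of one-dimensional subschemes of $\Pro^{5g-6}$ with Hilbert polynomial $(6t-1)(g-1)$ admits a natural $\SL_{5g-5}$-equivariant projective embedding: using a sufficiently high multiple $m$ of the very ample line bundle on the corresponding Grassmannian of quotients, $H$ sits inside a projective space on which $\SL_{5g-5}$ acts linearly. This supplies the linearization needed to invoke GIT on the $\SL_{5g-5}$-orbit on $H$, which factors through the $\PGL_{5g-5}$-action since the center acts trivially on $\Pro^{5g-6}$.

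The main step, and the main obstacle, is the stability analysis: one must prove that for $m \gg 0$ the locus $\Hbar_g$ of tri-canonically embedded Deligne-Mumford stable curves coincides with the GIT semistable (equivalently, stable) locus of $H$ with respect to this linearization. This is Mumford's (and Gieseker's) theorem on asymptotic Hilbert stability of pluri-canonically embedded curves; the proof applies the Hilbert-Mumford numerical criterion to one-parameter subgroups and uses the combinatorial/geometric features of nodal curves (the balancing condition on components forced by the definition of stability from Definition \ref{def.stable}) to bound the relevant weights. Because $\Mgbar$ is a separated DM stack (Proposition \ref{prop.mgisdm}), the $\PGL_{5g-5}$-stabilizers on $\Hbar_g$ are finite and reduced, so GIT semistable points are in fact properly stable. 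Consequently Mumford's theorem applies and the GIT quotient $\Hbar_g / \!/ \PGL_{5g-5}$ is a geometric quotient in the sense of \cite[Definition 0.6]{MFK:94}.

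Finally, I would record three consequences. First, the resulting scheme $\MMbar_g$ is projective because it is the GIT quotient of a projective scheme by a reductive group acting with stable = semistable. Second, by Proposition \ref{prop.geoquotismoduli} it is a coarse moduli scheme for $\Mgbar$, and uniqueness of coarse moduli makes this assignment canonical. Third, as an internal consistency check, the projectivity of $\MMbar_g$ is compatible with Theorem \ref{thm.Mgbarisproper}, which asserts properness of $\Mgbar$ over $\Spec \Z$ and thus forces the coarse moduli space to be proper. The substantive geometric content of the theorem is the GIT stability statement; everything else is formal consequence of the quotient-stack description of $\Mgbar$ and the general theory of coarse moduli spaces developed in Section \ref{subsec.DMcoarsemoduli}.
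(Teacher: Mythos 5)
Your proposal follows essentially the same route the paper indicates: present $\Mgbar$ as $[\Hbar_g/\PGL_{5g-5}]$, construct a projective geometric quotient of $\Hbar_g$ via GIT using the Mumford--Gieseker asymptotic stability analysis of tri-canonically embedded stable curves, and identify that quotient with the coarse moduli scheme via Proposition \ref{prop.geoquotismoduli}. The paper likewise only sketches this argument, deferring the substantive stability analysis to Chapter 4 of \cite{HaMo:98}, just as you defer it to Mumford and Gieseker.
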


Since Knudsen proved that the contraction morphisms $\Mbar_{g,n+1} \to
\Mbar_{g,n}$ are representable and projective we 
can use induction to obtain the following Corollary of Theorem \ref{thm.mgbarprojective}
\begin{cor}
If $2g-2 + n >0$ the DM stack $\Mbar_{g,n}$ stack has a projective coarse moduli scheme $\MMbar_{g,n}$.
\end{cor}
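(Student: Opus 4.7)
The approach is induction on $n$, bootstrapping from Theorem~\ref{thm.mgbarprojective} together with a handful of minimal base cases, and using Knudsen's representable projective contraction $\pi\colon\Mbar_{g,n+1}\to\Mbar_{g,n}$ from Theorem~\ref{thm.knudsen} as the inductive tool.

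For the base cases dictated by $2g-2+n>0$: when $g\geq 2$ we take $n=0$ and quote Theorem~\ref{thm.mgbarprojective} directly, yielding the projective variety $\MMbar_g$. When $g=0$ the minimal case is $n=3$, and the remark following Theorem~\ref{thm.knudsen} already records $\Mbar_{0,3}=\Spec S$. When $g=1$ the minimal case is $n=1$, and the presentation $\Mbar_{1,1}=[W/\PGL_3]$ recalled in the low-genus example, together with Proposition~\ref{prop.geoquotismoduli} applied to the GIT geometric quotient, exhibits the compactified $j$-line as a projective coarse moduli scheme.

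For the inductive step, assume $\MMbar_{g,n}$ exists as a projective coarse moduli scheme. Apply Theorem~\ref{thm.finiteparam} to choose a finite surjection $Y\to\Mbar_{g,n}$ from a scheme; composing with $\Mbar_{g,n}\to\MMbar_{g,n}$ shows $Y$ is finite over a projective scheme, so it may be taken projective. Form the fibre product $Y':=Y\times_{\Mbar_{g,n}}\Mbar_{g,n+1}$. Representability of $\pi$ makes $Y'$ a scheme, base change of a projective morphism makes $Y'\to Y$ projective (hence $Y'$ projective), and base change of a finite surjection makes $Y'\to\Mbar_{g,n+1}$ finite and surjective. By Definition~\ref{def.proper} the DM stack $\Mbar_{g,n+1}$ is therefore separated and proper over $S$, and by Keel--Mori it has a proper coarse moduli space $\MMbar_{g,n+1}$; the composition $Y'\to\Mbar_{g,n+1}\to\MMbar_{g,n+1}$ is a finite surjection from a projective scheme onto $\MMbar_{g,n+1}$, which is enough to force $\MMbar_{g,n+1}$ to be a scheme (Chevalley--Knutson).

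The real technical core of the induction is to upgrade properness of $\MMbar_{g,n+1}$ to projectivity. The natural route is to descend a relatively ample line bundle along $\pi$: one chooses a $\pi$-relatively ample line bundle on $\Mbar_{g,n+1}$ (for instance a suitable twist of $\omega_\pi$ by the tautological sections), passes to a tensor power large enough that it descends to a line bundle on the coarse moduli space $\MMbar_{g,n+1}$, and verifies that the descended bundle is ample over $\MMbar_{g,n}$. Pulling back an ample class from the projective scheme $\MMbar_{g,n}$ and adding then yields an ample line bundle on $\MMbar_{g,n+1}$. The subtle point, where the argument really needs care, is precisely this descent of both the bundle and its ampleness through the coarse moduli map $\Mbar_{g,n+1}\to\MMbar_{g,n+1}$, which requires controlling the action of the automorphism groups of pointed stable curves on fibres of the relatively ample line bundle.
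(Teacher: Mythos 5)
Your overall strategy---induction on $n$ from the base cases $\MMbar_g$ (Theorem~\ref{thm.mgbarprojective}), $\Mbar_{0,3}=\Spec S$, and $\Mbar_{1,1}$, driven by Knudsen's representable projective contraction $\Mbar_{g,n+1}\to\Mbar_{g,n}$---is exactly the paper's, which disposes of the corollary in one sentence by citing Knudsen and Theorem~\ref{thm.mgbarprojective}. Your intermediate scaffolding is mostly sound and more detailed than the paper's: pulling back a finite surjection $Y\to\Mbar_{g,n}$ from a projective scheme along the representable projective contraction produces a projective scheme $Y'$ finite and surjective over $\Mbar_{g,n+1}$, which gives properness via Definition~\ref{def.proper} and Keel--Mori. (Separatedness of $\Mbar_{g,n+1}$ does not come from the finite cover and needs its own one-line argument, e.g.\ from representability and separatedness of the contraction over the separated stack $\Mbar_{g,n}$, but that is minor.)

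There are, however, two genuine gaps. First, the appeal to ``Chevalley--Knutson'' to conclude that $\MMbar_{g,n+1}$ is a scheme is not correct: Knutson's Chevalley theorem says that a separated algebraic space with a finite surjective cover by an \emph{affine} scheme is affine. A finite surjective cover by a \emph{projective} scheme does not force an algebraic space to be a scheme---by Theorem~\ref{thm.finiteparam} every separated DM stack, hence every proper algebraic space, admits such a cover, so the statement you invoke would prove far too much; as used here it quietly assumes the projectivity you are trying to establish. Second, and more seriously, your final paragraph correctly identifies the crux (descending a $\pi$-relatively ample bundle to the coarse space and verifying ampleness of the descended bundle) but only describes it as ``the natural route'' and flags it as the subtle point; nothing is actually proved about why a power of the twisted relative dualizing sheaf descends or why the descended bundle is relatively ample. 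That is precisely the content the paper outsources to Knudsen's projectivity theorems; within the paper's own framework the honest alternative is to rerun the GIT argument of Theorem~\ref{thm.mgbarprojective} on the quotient presentation $\Mbar_{g,n+1}=[\Zbar_{g,n+1}/\PGL_{5g-5}]$. As written, your induction establishes that $\MMbar_{g,n+1}$ is a proper algebraic space, but not that it is a projective scheme.
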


The proof of theorem \ref{thm.mgbarprojective} is based on showing
that a geometric quotient of the quasi-projective variety
$\Hbar_{g}$ by $\PGL_{5g-5}$ exists as a projective variety. The main
technique is Mumford's geometric invariant
theory. However, the proof is indirect and also uses techniques developed
by Gieseker on asymptotic stability.
Chapter
4 of the book \cite{HaMo:98} gives an excellent exposition of the
proof of the projectivity of $\MMbar_{g}$.

\subsection{Cohomology of Deligne-Mumford stacks and their moduli spaces} \label{subsec.cohmoduli}
In this final section we compare the cohomology/Chow ring of a
quotient DM
stack with the cohomology/Chow ring of its moduli space.

\begin{defi}
If ${\mathcal X}$ is a DM stack then we define $\dim {\mathcal X}$ to be $\dim U$
where $U \to {\mathcal X}$ is any \'etale surjective map from a
scheme.
\end{defi}

An immediate consequence of Theorem \ref{thm.localstr} is the
following proposition.
\begin{prop}
If ${\mathcal X}$ is a separated DM stack and ${\mathcal X} \to M $ is
a coarse moduli scheme, then $\dim X = \dim M$.
\end{prop}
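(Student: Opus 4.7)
The plan is to combine the definition of $\dim \mathcal{X}$ with the local structure of $M$ provided by Theorem \ref{thm.localstr}, reducing everything to a dimension count on schemes. Fix a point $m$ of $M$ and let $[U/H] \to {\mathcal X}$ be a representable \'etale chart as in that theorem, with $U$ affine, $H$ a finite group, and with a cartesian square whose bottom arrow $U/H \to M$ has image containing $m$. The goal is to match $\dim U$ (which computes $\dim {\mathcal X}$) with the dimension of $M$ near $m$, passing through the intermediate scheme $U/H$.

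First I would build an \'etale surjective cover of ${\mathcal X}$ by a scheme. Since $H$ is a finite (hence \'etale) group scheme, the atlas $U \to [U/H]$ is an $H$-torsor and in particular \'etale. Composing with the \'etale representable morphism $[U/H] \to {\mathcal X}$ gives an \'etale morphism $U \to {\mathcal X}$. Taking the disjoint union of such $U$'s as $m$ ranges over $M$ produces an \'etale surjective map $V \to {\mathcal X}$ from a scheme, so by the definition preceding the statement one has $\dim {\mathcal X} = \dim V$, and the computation of $\dim V$ may be done chart-by-chart.

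Now I would trace dimensions through the two layers of the local structure theorem. The morphism $U \to U/H$ is finite and surjective, because $H$ is finite, and finite surjective morphisms of schemes preserve Krull dimension; thus $\dim U = \dim U/H$. The morphism $U/H \to M$ of schemes (or algebraic spaces) is \'etale: the square in Theorem \ref{thm.localstr} is cartesian and its top horizontal arrow is \'etale, and formation of the coarse moduli space commutes with \'etale base change by Keel--Mori \cite{KeMo:97}, so the bottom horizontal arrow inherits \'etaleness. \'Etale morphisms preserve dimension, so $\dim U/H$ agrees with the local dimension of $M$ at $m$. Chaining the equalities gives $\dim U = \dim U/H = \dim_m M$, and taking the supremum over charts covering $M$ yields $\dim {\mathcal X} = \dim V = \dim M$.

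The main obstacle is the compatibility of the coarse moduli space with \'etale base change needed to conclude that $U/H \to M$ is \'etale; once that is in hand the argument is essentially bookkeeping about dimension under finite surjective and \'etale morphisms of schemes. This compatibility is precisely the content of the Keel--Mori construction and is the same input that underlies Theorem \ref{thm.localstr} itself, so it is available in our setting.
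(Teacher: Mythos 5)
Your argument is correct and follows exactly the route the paper intends: the paper offers no written proof, stating only that the proposition is an immediate consequence of Theorem \ref{thm.localstr}, and your chain $\dim U = \dim (U/H) = \dim_m M$ via a finite surjective map followed by an \'etale map is the natural way to make that immediacy precise. The one point deserving care --- that the bottom arrow $U/H \to M$ is \'etale, which is not literally asserted in the paper's wording of Theorem \ref{thm.localstr} (only the top arrow is called \'etale and the square cartesian) --- is correctly supplied by your appeal to the Keel--Mori construction, where the local charts are built as \'etale maps to $M$ from the outset.
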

For quotient stacks we can compute the dimension equivariantly.
\begin{prop}
If ${\mathcal X} = [X/G]$ is a quotient DM stack then $\dim {\mathcal
  X} = \dim X - \dim G$.
\end{prop}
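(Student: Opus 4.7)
The plan is to exploit the fact that the natural morphism $\pi\colon X \to [X/G]$ is itself a $G$-torsor, hence smooth and surjective of relative dimension $\dim G$, and then compare any étale atlas to it via fiber products.

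First I would verify the key structural fact: the morphism $X \to [X/G]$ induced by the identity map $X \to X$ (using the construction of the earlier example, where a scheme morphism $T \to X$ gives the trivial torsor $G \times T$ together with the equivariant map $(g,t) \mapsto g f(t)$) is representable, and moreover for any morphism $T \to [X/G]$ classifying a torsor $E \to T$ with equivariant map $E \to X$, the fiber product $T \times_{[X/G]} X$ is represented by $E$ itself. This is exactly the content of the example earlier in the paper. Since $E \to T$ is a $G$-torsor, hence smooth of relative dimension $\dim G$ and surjective, the morphism $X \to [X/G]$ has the same properties after any base change.

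Next, given any étale surjective map $U \to \mathcal{X} = [X/G]$ from a scheme $U$ (which exists by the definition of a DM stack), I would form the 2-cartesian diagram
$$\begin{array}{ccc}
U \times_{\mathcal X} X & \to & X \\
\downarrow & & \downarrow \\
U & \to & \mathcal X
\end{array}
$$
By the preceding paragraph, the right vertical map is a $G$-torsor, hence so is the left vertical map; in particular $U \times_{\mathcal X} X$ is a scheme, smooth and surjective over $U$ of relative dimension $\dim G$, giving $\dim(U \times_{\mathcal X} X) = \dim U + \dim G$. On the other hand, the top horizontal map is the pullback of the étale surjective map $U \to \mathcal X$ along $X \to \mathcal X$, so it is étale and surjective between schemes, which forces $\dim(U \times_{\mathcal X} X) = \dim X$. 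Equating the two expressions yields $\dim U = \dim X - \dim G$, which is the desired equality once we invoke the definition $\dim \mathcal X := \dim U$.

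The argument simultaneously justifies that the definition $\dim \mathcal X = \dim U$ is independent of the chosen étale atlas: any two such $U, U'$ both equal $\dim X - \dim G$. The only real obstacle is the first structural step — unpacking why $X \to [X/G]$ is representable by $G$-torsors — but this is already carried out in the earlier example of the paper, so I would simply cite it. A minor technical caveat is that these dimension equalities presume equidimensionality (or are read component-wise), which is standard and harmless for the stacks of interest.
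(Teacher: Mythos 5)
Your argument is correct, and it is the natural one: the paper actually states this proposition without any proof, so there is nothing to compare against. Your use of the fiber product $U \times_{\mathcal X} X$, which is simultaneously a $G$-torsor over the \'etale atlas $U$ (dimension $\dim U + \dim G$) and \'etale surjective over $X$ (dimension $\dim X$), correctly supplies the omitted justification, and it also confirms, as you note, that $\dim U$ is independent of the choice of atlas.
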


The cohomology and Chow cohomology groups of CFG may be non-zero in
arbitrarily high degree. The next results shows that the
rational cohomology groups of a DM stack vanish in degree more than
the real
dimension
of the stack.

\begin{prop} \label{prop.dmcohvanishing}
Let ${\mathcal X}$  be a DM stack.

i) If ${\mathcal X}$ is defined over $\C$,
$H^k({\mathcal X}) \otimes \Q= 0$ for $k > 2 \dim_\C {\mathcal X}$.

ii) If ${\mathcal X}$ is defined over an arbitrary field,
$A^k({\mathcal X}) \otimes \Q = 0$ for $k > \dim {\mathcal X}$.
\end{prop}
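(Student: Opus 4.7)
The plan is to reduce the vanishing on $\mathcal{X}$ to the classical vanishing for an ordinary scheme of the same dimension $n$, using Theorem \ref{thm.finiteparam}. That theorem supplies a finite surjective morphism $p\colon Y \to \mathcal{X}$ from a scheme $Y$. Because finite morphisms preserve dimension, $\dim Y = \dim \mathcal{X} = n$, and after discarding components one may assume $Y$ is reduced of pure dimension $n$.

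The key step is to construct a rational trace/pushforward $p_*$ on both $H^*$ and $A^*$ satisfying the projection formula
\[
p_* \circ p^* = \deg(p)\cdot \mathrm{id},
\]
so that $p^*$ becomes injective on rational (co)homology. The pushforward is defined in the expected way: for any test scheme $T \to \mathcal{X}$ the base change $Y\times_{\mathcal{X}} T \to T$ is a finite (hence proper) morphism of schemes, so admits the usual proper pushforward on both singular (co)homology and operational Chow cohomology; assembling these pushforwards over all test schemes produces $p_*$ at the level of $\mathcal{X}$. The projection formula can be verified by restricting to a dense open substack $\mathcal{U} \subset \mathcal{X}$ over which $p$ becomes finite étale of degree $d = \deg(p)$, where $p_* p^* = d \cdot \mathrm{id}$ is immediate, and then extending to all of $\mathcal{X}$ by Noetherian induction, using the localization sequences for the complementary closed substack, which has dimension strictly less than $n$.

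Once $p^*\colon H^k(\mathcal{X})_\mathbb{Q} \to H^k(Y)_\mathbb{Q}$ and $p^*\colon A^k(\mathcal{X})_\mathbb{Q} \to A^k(Y)_\mathbb{Q}$ are injective, the problem reduces to vanishing on the scheme $Y$. For (i), $Y(\mathbb{C})$ is a complex variety of complex dimension $n$, hence a topological space of real dimension $2n$, so singular cohomology vanishes above degree $2n$. For (ii), choose a resolution of singularities $\pi\colon \tilde Y \to Y$; rational operational Chow cohomology pulls back injectively to the resolution (by Kimura's theorem, or by Chow-envelope descent if one wants to avoid characteristic hypotheses), and on the smooth $n$-dimensional variety $\tilde Y$ Poincaré duality gives $A^k(\tilde Y) = A_{n-k}(\tilde Y) = 0$ for $k > n$.

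The main obstacle is establishing the projection formula $p_* \circ p^* = \deg(p)\cdot\mathrm{id}$ for a finite representable surjection whose source and target may both be singular and whose map need not be flat; this is where the argument is genuinely technical, and where passage to a generic étale locus combined with Noetherian induction on the complement does the real work. Everything else reduces to standard facts about the (co)homology of schemes.
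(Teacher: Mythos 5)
Your overall reduction --- pass to a finite surjective cover $p\colon Y\to\mathcal{X}$ from a scheme via Theorem \ref{thm.finiteparam}, show $p^*$ is rationally injective, and invoke the scheme-level vanishing --- is exactly the paper's strategy, and your treatment of the final scheme-level step (real dimension $2n$ for $H^*$, resolution plus Kimura/envelope descent for operational $A^*$) is fine. The gap is in how you propose to establish the injectivity of $p^*$. You want to build a globally defined pushforward $p_*$ on $H^*(\mathcal{X})$ and $A^*(\mathcal{X})$ and prove $p_*p^*=\deg(p)\cdot\mathrm{id}$ by restricting to a dense open substack where $p$ is \'etale and then running a Noetherian induction via ``localization sequences for the complementary closed substack.'' Neither singular cohomology nor operational Chow cohomology has a localization sequence of the required form: excision/localization is a feature of Borel--Moore homology and of the Chow homology groups $A_*$, not of the contravariant theories in which your classes live, so the inductive step has nothing to run on. There are further problems: a finite surjective morphism need not be generically \'etale over an arbitrary field, which matters for part (ii); and there is no ``usual proper pushforward'' on singular cohomology or on operational $A^*$ for a finite morphism of possibly singular varieties, so assembling $p_*$ from fiberwise transfers, and verifying the base-change compatibility needed for the assembled data to define a class on $\mathcal{X}$, is itself a nontrivial issue that you have not addressed.

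None of this machinery is needed. Since a class on a CFG is by definition a compatible family of classes on test schemes, it suffices to show $c(t)=0$ for each individual object $t\in\mathcal{X}(T)$, and for that you only need the transfer for the single finite surjective morphism of schemes $z'\colon Y_T=Y\times_{\mathcal{X}}T\to T$. By functoriality the pullback of $c(t)$ to $Y_T$ equals the pullback of the class $c(y)\in H^k(Y)$, which vanishes for dimension reasons; Smith's transfer for ramified coverings (resp.\ the projection formula in operational Chow theory applied to fundamental classes of subvarieties of $T$) gives $z'_*z'^*=\deg(z')\cdot\mathrm{id}$ rationally on $T$, hence $c(t)=0$. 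This is the paper's argument; it requires no globally defined $p_*$, no projection formula at the level of the stack, and no Noetherian induction.
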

\begin{proof}

Suppose that $c \in H^k({\mathcal X}) \otimes \Q$ with $k > 2\dim {\mathcal X}$.
We will show that for any scheme $T$ and map $T \to {\mathcal X}$
corresponding to an object $t$ of ${\mathcal X}(T)$ the cohomology
class $c_T$ is 0.
Let $Z \stackrel{z} \to {\mathcal X}$ be a finite surjective map from a scheme
and let $Z_T =Z \times_{\mathcal X} T$ so we have a 2-cartesian diagram
$$\begin{array}{ccc}
Z_T & \stackrel{z'} \to & T \\
t'\downarrow & & t \downarrow \\
Z & \stackrel{z} \to & {\mathcal X}
\end{array}$$
Since $\dim Z = \dim{\mathcal X}$ we know that $c(z) =0$. On the other
hand, functoriality implies that $z'^*c(t) =t'^*c(z) = 0$. Now
$Z_T \stackrel{z'} \to T$ is a finite surjective morphism of
schemes. Hence it is a ramified covering of the underlying topological
spaces. By \cite{Smi:83} there is a
transfer map $z'_* \colon H^*(Z_T,\Q) \to H^*(T,\Q)$ such that $z'_*z'^*$ is
multiplication by the degree of $z'$. Hence $c(t)$
is 0.

The proof for Chow rings is similar.
\end{proof}

For quotient DM stacks with a coarse moduli scheme we obtain a sharper result.
\begin{thm} \label{thm.quothom}
Let ${\mathcal X} = [X/G]$ be a DM quotient stack and let
$p \colon {\mathcal X} \to M$ be a coarse moduli scheme.

i) If ${\mathcal X}$ is defined over $\C$ then there are isomorphisms
$H^*({\mathcal X}) \otimes \Q  \to H^*(M,\Q)$ and
$H_*({\mathcal X}) \otimes \Q \to H_*(M,\Q)$.

ii) In the algebraic category there analogous isomorphisms
$A^*({\mathcal X}) \otimes \Q  \to A^*(M) \otimes \Q$ and $A_*({\mathcal X}) \to
A_*(M) \otimes \Q$. 
\end{thm}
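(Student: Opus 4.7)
The plan is to pass, via Totaro's algebraic approximation of $EG$, to a morphism of schemes (or algebraic spaces) $\pi \colon X \times_G U \to M$, and then exhibit a two-sided inverse to $\pi^*$ using the finite surjective cover of ${\mathcal X}$ furnished by Theorem \ref{thm.finiteparam} together with Smith's transfer for ramified coverings, in the spirit of the argument appearing in the proof of Proposition \ref{prop.dmcohvanishing}.

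Fix a degree $k$ and choose a $G$-representation $V$ with a $G$-free open $U \subset V$ whose complement has codimension greater than $k$. Then Propositions \ref{prop.totaroapprox} and \ref{prop.equivchowcoh} identify $H^k({\mathcal X})$ and $A^k({\mathcal X})$ with the corresponding groups of the scheme $X \times_G U$, while Corollary \ref{cor.moduliisgeoquot} identifies $M$ with the geometric quotient $X/G$, so that the projection $X \times U \to X \to M$ descends to a morphism $\pi \colon X \times_G U \to M$. It therefore suffices to show that $\pi^*$ is a rational isomorphism. By the Seshadri form of Theorem \ref{thm.finiteparam} pick a finite surjective map $f \colon Y \to M$ of some degree $d$ such that $X' := X \times_M Y$ is a scheme on which $G$ acts freely with $X'/G = Y$. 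This produces a cartesian square
$$
\begin{array}{ccc}
X' \times_G U & \stackrel{\tilde f}\to & X \times_G U \\
\bar\pi \downarrow & & \pi \downarrow \\
Y & \stackrel{f}\to & M
\end{array}
$$
in which $\tilde f$ is finite surjective of degree $d$ and $\bar\pi$ is the $U$-bundle associated to the $G$-torsor $X' \to Y$; in particular $\bar\pi^*$ is a rational isomorphism in our range of degrees because the positive-degree rational cohomology of $U$ vanishes there. Smith's transfer \cite{Smi:83}, and its algebraic counterpart via proper pushforward on Chow cohomology, yields $f_* f^* = d \cdot \mathrm{id}$ and $\tilde f_* \tilde f^* = d \cdot \mathrm{id}$; combined with the base-change relation $\pi^* f_* = \tilde f_* \bar\pi^*$ for the square above, these imply that the map $e \mapsto \tfrac{1}{d}\, f_* \, (\bar\pi^*)^{-1} \, \tilde f^* e$ is a two-sided inverse to $\pi^*$, proving (i) and the cohomological half of (ii).

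The main obstacle is justifying the base-change identity $\pi^* f_* = \tilde f_* \bar\pi^*$, since $\pi$ is a morphism of possibly singular algebraic spaces and $f$ is only finite (not flat or lci in general). I would handle it \'etale-locally on $M$: by Theorem \ref{thm.localstr}, after passing to an \'etale neighbourhood of a given point, ${\mathcal X}$ is presented as $[W/H]$ and $M$ as $W/H$ for a finite group $H$ acting on an affine scheme $W$. In this local model the identity reduces to classical transfer for finite group quotients, equivalent to the rational isomorphisms $H^*(W;\Q)^H = H^*(W/H;\Q)$ and $A^*(W)^H_\Q = A^*(W/H)_\Q$. Descent of $H^*(-;\Q)$ and $A^*(-)_\Q$ along finite \'etale covers then globalizes the local identity. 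The equivariant homology statements in (i) and (ii) follow from the same diagram, with proper pushforward replacing Smith transfer; alternatively, when ${\mathcal X}$ is smooth one may deduce them from the cohomological case already proved via the Poincar\'e dualities of Theorem \ref{thm.equivpd} and Proposition \ref{prop.algpd}, and recover the general case by a standard d\'evissage.
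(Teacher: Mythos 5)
There is a genuine gap at the heart of your construction: the scheme $X' := X \times_M Y$ cannot carry a free $G$-action unless $G$ already acts freely on $X$. Since $G$ acts on $X \times_M Y$ through the first factor only, the stabilizer of a point $(x,y)$ is exactly $G_x$, which is nontrivial whenever ${\mathcal X}$ is genuinely stacky. The cover furnished by Theorem \ref{thm.finiteparam} in Seshadri's form is $X' = Y \times_{\mathcal X} X$, the total space of the $G$-torsor over $Y$ classified by $Y \to [X/G]$, and this is \emph{not} $X \times_M Y$: fiber products over the stack and over the coarse space differ. With the correct $X'$ your square is only commutative, not cartesian (the fiber of $X' \times_G U \to Y$ over $y$ is $U$, while the fiber of $X \times_G U \to M$ over its image is $U/G_x$), and the degrees no longer match: $\deg(\tilde f) = e \cdot \deg(f)$, where $e$ is the order of the generic stabilizer --- precisely the relation $\deg f = e_V \deg h$ recorded in the paper's remark on diagram \eqref{diag.quotients}. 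Consequently the base-change identity $\pi^* f_* = \tilde f_* \bar\pi^*$ fails as stated (apply both sides to the unit: the left side gives $\deg f$, the right gives $\deg \tilde f$), and your two-sided inverse acquires an uncontrolled factor. If instead you keep the cartesian square with $X' = X \times_M Y$, then $\bar\pi$ is not a $U$-bundle, and the only available reason for $\bar\pi^*$ to be a rational isomorphism is that its fibers $U/G_x$ are $\Q$-acyclic in the relevant range --- but that observation applied directly to $\pi$ (fibers $U/G_x$, or $EG/G_x$ in the Borel model) already proves part (i) with no transfer at all, which is exactly the paper's one-paragraph argument for cohomology.

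For part (ii) the paper defers to Section 4 of \cite{EdGr:98}, where the finite-cover-plus-pushforward strategy you have in mind is carried out, but with the stabilizer bookkeeping above built in (cf.\ Lemma \ref{lem.cycles} and Proposition \ref{prop.cycles}, where the factor $e_V$ appears explicitly). Note also that your appeal to ``proper pushforward on Chow cohomology'' is problematic: operational Chow cohomology of a singular $M$ does not admit pushforward along arbitrary finite surjective maps, so the argument there must be run on the Chow groups $A_*$ and transported to $A^*$ separately. Your overall strategy is salvageable, but only after replacing $X \times_M Y$ by $Y \times_{\mathcal X} X$ and inserting the factor $e$ throughout.
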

\begin{proof}
The proof in cohomology is very simple. The coarse moduli space $M$
is topologically the space of $G$-orbits $X/G$. Let $q \colon X \to
X/G$ be the quotient map. There is a map of quotients
$X \times_G EG \to X/G$ whose fiber at a point $m \in X/G$ is the
quotient $EG/G_x$ where $G_x$ is the stabilizer of any point in the orbit
$q^{-1}(m)$. Because $G_x$ is finite and $EG$ is acyclic the fiber
$EG/G_x$ is 
$\Q$-acyclic. Hence the pullback $H^*(X/G,\Q) \to H^*(X \times_G EG,\Q)$
is an isomorphism.

The proof in intersection theory is more difficult. It makes use of
the fact that there exists a finite surjective morphism form a scheme
$Z \to {\mathcal X}$. The proof is given in Section 4 of \cite{EdGr:98} and the
results is valid even if $M$ is an algebraic space.
\end{proof}
As a corollary of Theorem \ref{thm.quothom} and equivariant Poincar\'e
duality (Theorem \ref{thm.equivpd})
we conclude that there is an intersection product on the
rational homology groups of the moduli space of a smooth DM quotient
stack.
Similarly, Proposition \ref{prop.algpd} implies
that the rational Chow groups of the
moduli space have an intersection product.

\subsubsection{Algebraic cycles on DM stacks and their moduli}
Let ${\mathcal X} = [X/G]$ be a separated quotient DM stack with
coarse moduli scheme $M$. The isomorphism $A_*({\mathcal X})\otimes \Q \to
A_*(M) \otimes \Q$ of Theorem \ref{thm.quothom} can be explicitly
described using equivariant cycles.

\begin{lemma} \label{lem.cycles} \cite[Proposition 13a]{EdGr:98}
Let ${\mathcal X} = [X/G]$ be a quotient DM stack. Every element of
$A_k({\mathcal X}) \otimes \Q = A_*^G(X) \otimes \Q$ 
can written as a $\Q$-linear combination $\sum_{i} \alpha_i [V_i]_G$
where the $[V_i]_G$ are the fundamental classes of $G$-invariant
subvarieties of dimension $k + \dim G$.
\end{lemma}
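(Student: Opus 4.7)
The plan is to reduce to the case of a free $G$-action via a finite equivariant cover of schemes, establish the lemma in that case using the homotopy invariance isomorphism for the bundle $X' \times_G U$, and transfer the conclusion back to $X$ by pushforward.

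First, by Theorem \ref{thm.finiteparam} and the remark following it there is a finite surjective $G$-equivariant morphism of schemes $f\colon X' \to X$ with $G$ acting freely on $X'$; set $Y = X'/G$, so that $\pi\colon X' \to Y$ is a $G$-torsor, and write $g = \dim G$. The associated bundle $X' \times_G U \to Y$ has fiber $U$, which is open in an $l$-dimensional representation with complement of arbitrarily high codimension, so flat pullback combined with excision yields an isomorphism $\pi^{*}\colon A_k(Y) \otimes \Q \to A_{k+l}(X' \times_G U) \otimes \Q = A_{k+g}^G(X') \otimes \Q$. Tautologically $A_k(Y)$ is generated by fundamental classes $[\bar W]$ of $k$-dimensional irreducible subvarieties $\bar W \subset Y$; each such $\bar W$ pulls back under $\pi$ to a $G$-invariant subvariety $V' = \pi^{-1}(\bar W) \subset X'$ of dimension $k+g$, whose equivariant fundamental class $[V']_G$ corresponds to $[\bar W]$ under $\pi^{*}$. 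Hence $A_{k+g}^G(X') \otimes \Q$ is generated by equivariant fundamental classes of $G$-invariant subvarieties of $X'$ of dimension $k+g$.

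To descend to $X$, consider the induced finite surjective morphism $f_U\colon X' \times_G U \to X \times_G U$. For any irreducible subvariety $W \subset X \times_G U$ of dimension $k+l$, the preimage $f_U^{-1}(W)$ has pure dimension $k+l$ and surjects onto $W$, so at least one irreducible component $W' \subset f_U^{-1}(W)$ maps onto $W$ and satisfies $f_{U,*}[W'] = d\cdot [W]$ for some positive integer $d$; hence $f_{U,*}$ is rationally surjective. Applying the free-action case to $[W']$ expresses it as a $\Q$-linear combination of classes $[V']_G$ for $V' \subset X'$ $G$-invariant of dimension $k+g$. Because $f$ is $G$-equivariant and finite, each image $f(V') \subset X$ is a $G$-invariant subvariety of the same dimension, and $f_{U,*}[V']_G$ is a positive integer multiple of $[f(V')]_G$. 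Combining, every element of $A_k({\mathcal X}) \otimes \Q = A_{k+g}^G(X) \otimes \Q$ is a $\Q$-linear combination of fundamental classes $[V_i]_G$ of $G$-invariant subvarieties $V_i \subset X$ of dimension $k + \dim G$, as required.

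The main technical point is handling pushforward along the finite but generally non-flat morphism $f$ without invoking a refined equivariant Gysin pullback. The argument above sidesteps this by exhibiting, for each fundamental class $[W]$ on $X \times_G U$, an explicit rational preimage under $f_{U,*}$ as a rescaling of an irreducible component of $f_U^{-1}(W)$; this reduces the problem to the free-action setting, where the homotopy-invariance isomorphism for the vector bundle $X' \times_G V \to Y$ (combined with excision for the high-codimension complement $X' \times_G (V\smallsetminus U)$) does the rest. Everything else—the behavior of finite equivariant pushforward on fundamental classes, and the identification $[V']_G = [\pi^{-1}(\bar W)]_G \leftrightarrow [\bar W]$—is routine.
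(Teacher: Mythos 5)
Your proof is correct. The paper itself gives no argument for this lemma — it is quoted directly from \cite[Proposition 13a]{EdGr:98} — and your reconstruction follows exactly the route of that reference and of the surrounding text here: reduce to a free action via the finite surjective equivariant cover $X' \to X$ supplied by Theorem \ref{thm.finiteparam} and the remark following it, identify $A_{k+g}^G(X')\otimes \Q$ with $A_k(X'/G)\otimes\Q$ by homotopy invariance plus excision for $X'\times_G V \supset X'\times_G U$ (the descended vector bundle over $X'/G$), and push forward along the finite surjective map $f_U$, which is rationally surjective and carries invariant classes to invariant classes. The only point deserving a word of care is that when $G$ is disconnected the preimage $\pi^{-1}(\bar W)$, and likewise $f(V')$, may be reducible with irreducible components permuted transitively by $G$, so ``$G$-invariant subvariety'' must be read as allowing such equidimensional invariant closed sets; this reading is forced by the statement itself (for two points interchanged by $\Z/2$ there is no irreducible invariant $0$-cycle generating $A_0\otimes\Q$), so it is a matter of convention rather than a gap in your argument.
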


The next result follows from the proof of \cite[Theorem 3a]{EdGr:98}.
\begin{prop} \label{prop.cycles}
Let ${\mathcal X} = [X/G]$ be a quotient DM stack defined over a field of
characteristic 0. Suppose that ${\mathcal X}$ has a coarse moduli
scheme
corresponding to a geometric quotient 
$p \colon X \to M$. Let $W$ be a $k$-dimensional subvariety of $M$ and let
$V=p^{-1}(W)$. Let $e_V$ be the order of the stabilizer at a general
point of $V$.
Then
the isomorphism $A_k(M)\otimes \Q \to A_k({\mathcal X}) \otimes
\Q$
of Theorem \ref{thm.quothom} maps $[W]$  to $e_V [V]_G$.
\end{prop}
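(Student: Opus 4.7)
My plan is to prove this by showing that the forward isomorphism $\phi \colon A_k({\mathcal X}) \otimes \Q \to A_k(M) \otimes \Q$ of Theorem~\ref{thm.quothom} sends $[V]_G \mapsto \tfrac{1}{e_V}[W]$; the claim then follows by inverting. The strategy has two components: locate $\phi([V]_G)$ up to a scalar, then pin the scalar down via a local finite-group calculation using the local structure theorem.

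For the first step, since $G$ acts on $X$ with finite stabilizers and $G$ is smooth (so fibers of $p \colon X \to M$ are pure of dimension $\dim G$), $V = p^{-1}(W)$ has dimension $k + \dim G$. By $\Q$-linearity on components, we may assume $V$ is irreducible. Since $\phi([V]_G)$ is supported on $p(V) = W$ and lives in $A_k(M) \otimes \Q$, irreducibility and dimension force $\phi([V]_G) = \alpha [W]$ for a unique $\alpha \in \Q$. By Lemma~\ref{lem.cycles} and the same support considerations, this $\alpha$ is exactly the scalar such that $\phi^{-1}([W]) = \alpha^{-1}[V]_G$, so it suffices to compute $\alpha$.

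To compute $\alpha$ I would pass to a convenient étale neighborhood. By Theorem~\ref{thm.localstr} there is a representable étale map $[U/H] \to {\mathcal X}$ with $U$ affine and $H$ finite, sitting in a Cartesian square over an étale neighborhood $U/H \to M$ of the generic point of $W$. Étale pullback preserves generic stabilizer orders, so the preimage $\tilde V \subset U$ of $V$ has $e_{\tilde V} = e_V$. Granting that the isomorphism $\phi$ is compatible with étale base change on the moduli scheme---which is built into the construction in \S4 of \cite{EdGr:98}---the scalar $\alpha$ is unchanged when we replace $({\mathcal X}, M, V, W)$ by $([U/H], U/H, \tilde V, \tilde W)$. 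This reduces us to the case of a finite group quotient.

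In the finite-group case, $q \colon U \to U/H$ is finite, and the restriction $q|_{\tilde V} \colon \tilde V \to \tilde W$ is finite of degree $|H|/e_V$, since the generic fiber is a single $H$-orbit of cardinality $|H|/|H_x|$. Rationally, $A_*^H(U) \otimes \Q$ and $A_*(U/H) \otimes \Q$ are both identified with the $H$-coinvariants $A_*(U)_H \otimes \Q$, and on $H$-invariant cycles the isomorphism is realized by the averaged pushforward $\tfrac{1}{|H|} q_*$. Thus
$$\phi([\tilde V]_H) \;=\; \frac{1}{|H|}\, q_*[\tilde V] \;=\; \frac{1}{|H|} \cdot \frac{|H|}{e_V}\, [\tilde W] \;=\; \frac{1}{e_V}\, [\tilde W],$$
so $\alpha = 1/e_V$, giving $\phi^{-1}([W]) = e_V [V]_G$. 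The main obstacle is the étale compatibility of $\phi$ invoked in Step~3: one has to confirm that the global isomorphism of Theorem~\ref{thm.quothom} restricts, on étale neighborhoods of $M$, to the analogous isomorphism for the pulled-back quotient stack. This is not a formality and requires inspecting the construction of $\phi$ in \S4 of \cite{EdGr:98}, where a finite surjective cover $Z \to {\mathcal X}$ and the descent of cycles along it play a central role.
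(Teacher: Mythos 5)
Your argument reaches the correct answer but by a genuinely different route from the one the paper (implicitly) takes. The paper gives no self-contained proof: it cites the proof of \cite[Theorem 3a]{EdGr:98}, and the remark that follows explains the constant by passing \emph{upward} to the Seshadri cover of Theorem \ref{thm.finiteparam} --- a finite surjective $f\colon V'\to V$ with $\pi'\colon V'\to W'$ a $G$-torsor --- and using compatibility of the isomorphism with proper pushforward in diagram \eqref{diag.quotients} together with the degree identity $\deg f = e_V\deg h$. You instead pass \emph{downward}: you first pin $\phi([V]_G)$ to a single scalar $\alpha[W]$ by support and dimension, then compute $\alpha$ \'etale-locally on $M$ via Theorem \ref{thm.localstr} in a finite-group chart $[U/H]$, where the averaged pushforward $\tfrac{1}{|H|}q_*$ yields $1/e_V$. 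Both routes rest on unverified compatibilities of the isomorphism constructed in Section 4 of \cite{EdGr:98}: proper-pushforward compatibility for the paper's remark, and for you the \'etale-pullback compatibility (which you rightly flag) \emph{plus} the identification of the finite-quotient isomorphism with $\tfrac{1}{|H|}q_*$ --- note that this last assertion is precisely the proposition itself in the special case of a finite group, so it deserves the same scrutiny you give the \'etale step. The paper's route has the advantage of tracking the way the isomorphism is actually built in \cite{EdGr:98} (via the finite cover on which the action becomes free), so less auxiliary checking is needed there; yours has the advantage of reducing the constant to a transparent orbit count. Two small corrections: the irreducible components of $V=p^{-1}(W)$ need not be individually $G$-invariant (they are permuted transitively by $G$ because $W$ is irreducible), so the reduction ``to $V$ irreducible'' should be dropped --- your degree count $q_*[\tilde V]=(|H|/e_V)[\tilde W]$ goes through for the whole union of components anyway; and you must choose the chart of Theorem \ref{thm.localstr} whose image contains the generic point of $W$, as you do note.
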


\begin{example} When ${\mathcal X} = \Mgbar$ 
then the map of Proposition \ref{prop.cycles}
identifies with the map  
the map $\Pic(\Mgbar) \otimes \Q \to \Pic_{fun}(\Mgbar) \otimes \Q$
defined by Proposition 3.88 of \cite{HaMo:98}\footnote{Note that in
  \cite{HaMo:98} $\Mgbar$ refers to the coarse moduli scheme while here
  $\Mgbar$ refers to the stack of stable curves.}. 
\end{example}

\begin{remark}
The factor $e_V$ in the statement of Proposition \ref{prop.cycles} can
be understood as follows. Let $V$ be a variety on which $G$ acts
properly
and let $\pi \colon V \to W$ be a geometric quotient. 
By Theorem \ref{thm.finiteparam} there is finite surjective map
$f \colon V' \to V$ and a torsor $\pi' \colon V' \to W'$. It is easy to show that
there is an induced map on quotients $W' \stackrel{h} \to W$
such that the following diagram is commutative (but not cartesian).
\begin{equation} \label{diag.quotients} 
\begin{array}{ccc}
V' & \stackrel{f} \to &\ V\\
\pi' \downarrow & & \pi \downarrow \\
W' & \stackrel{h} \to & W
\end{array}
\end{equation}
If $w \in W$ and $x \in \pi^{-1}(w)$ then $\pi^{-1}(w)$ can be
identified
with the orbit of $x$ which is isomorphic to the quotient $G/G_x$. On
the other hand, since $G$ acts freely on $V'$ the fibers of $\pi'$ are
all isomorphic to $G$. It follows from this observation 
that $\deg f = e_V \deg h $. By mapping $[W]$ to $e_V[V]_G$
we ensure that our map $A_*(M) \otimes \Q \to A_*^G(X) \otimes \Q$
commutes with proper pushforwards in diagrams such as
\eqref{diag.quotients}.

Diagram \eqref{diag.quotients} can also be reinterpreted in the
language of stacks as saying the we have a sequence of finite
surjective maps
$W' \stackrel{p} \to [V/G] \stackrel{q} \to W$ such that 
$q \circ p = h$. Since $W$ is a geometric quotient it is the coarse
moduli space for the stack $[V/G]$. The map $p\colon W' \to [V/G]$ is
finite and representable
and its degree is the degree of the map $f \colon V' \to V$. Thus we
may view $q$ as a finite map, but if we require that $(\deg p)( \deg q) =
\deg (q \circ p)$ then we come to the surprising conclusion that $\deg q = {1\over{e_V}}$.
\end{remark}

\begin{example}
It is possible for the coarse moduli scheme $M$ of a smooth DM stack
${\mathcal X}$ to be smooth without ${\mathcal X}$ being representable.
For example, if $\dim {\mathcal X} = 1$ then
$M$ is smooth, since Theorem \ref{thm.localstr} implies that is
normal. However, it is not the case that there is an isomorphism of
{\em integral} Chow rings $A^*({\mathcal X})$ and $A^*(M)$. For
example, if ${\mathcal X} = \Mbar_{1,1}$ then its coarse moduli scheme
is
$\MMbar_{1,1} = \Pro^1$, but $A^*(\Mbar_{1,1}) = \Z[t]/24t^2$ while
$A^*(\Pro^1) = \Z[h]/h^2$. Moreover, in the ring structure on $\Pro^1$
induced by the ring structure on $A^*(\Mbar_{1,1})$ the identity
corresponds to ${1\over{2}}[\Pro^1]$ because general elliptic curve
has an automorphism group of order 2. If the automorphism group of a general
point of ${\mathcal X}$ is trivial then the identity will be $[M]$.
\end{example}

We conclude with a discussion of the degree of a cycle on a complete DM
quotient stack.
\begin{defi}
If ${\mathcal X}$ is complete $n$-dimensional DM stack with coarse moduli space $M$
and $\alpha \in A_0({\mathcal X})$ then we define $\deg \alpha$ to be
the degree of its image under the isomorphism $A_0({\mathcal X})
\otimes \Q \to A_0(M) \otimes \Q$. If
$c \in A^n({\mathcal X})$ we write $\int_{\mathcal X} c$ for the
degree of the image $c \cap [{\mathcal X}]$ in $A_0(M) \otimes \Q$.
\end{defi}
Note that the degree of an integral cycle on a DM stack need not
be an integer. Our final example is an equivariant take on a
well known calculation.
\begin{example}
We will show that $\int_{\Mbar_{1,1}} \lambda_1  = 1/24$ using
equivariant methods. 
Consider the action of $\C^*$ on $\A^2$ with weights (4,6); i.e.
$\lambda(x,y) = (\lambda^4 x, \lambda^6 y)$ and let $X = \A^2
\smallsetminus \{0\}$. There is an equivalence of categories 
$\Mbar_{1,1}  \to [X/\C^*]$ which associates to a family of elliptic
curves
$(C \stackrel{\pi} \to T, \sigma)$ the $\C^*$-bundle $E \to T$ where
$E$ is the complement of the zero section in the Hodge bundle
${\mathbb E} = \pi_*(\omega_{C/T})$.
Under this identification the Hodge bundle ${\mathbb E}$
corresponds to the line bundle $L_1$ on $X$, where the total space
of $L_1$ is $X \times \C$ and $\C^*$ acts on $\C$ with weight 1.
Now $A^*([X/\C^*]) = \Z[t]/24t^2$ where $t = c_1(L_1)$. Thus, our
problem
is to compute $\int_{[X/\C^*]} t$. To do this we must represent $t$
(or more precisely $t \cap [X]_{\C^*}$) in
terms of $\C^*$-equivariant 0-cycles on $X$. There are two natural
choices,
the line\footnote{A line in $X$ determines an
element of $A_0^{\C^*}(X)$ because 
we shift the degree by the dimension of the group when we define
equivariant Chow groups.} 
$x=0$ or the line $y=0$.  The line $x=0$ is cut out by an
equation whose weight is $4$ so $[x=0] = 4t$ or $t = {1\over{4}}[x=0]$. A
point on this line has coordinates $(0,a)$ for some $a \in \C^*$, and
the stabilizer of such a point is $\mu_6$. Hence the degree of $[x=0]$
is 1/6 so $\int_{[X/\C^*]} t= 1/4 \times 1/6 = 1/24$. Note that if we had
chosen the line $y=0$ then we would see that $t = {1\over{6}}[y=0]$
but the stabilizer at a point of the invariant subvariety $y= 0$ has
order 4 - yielding 1/24 as well.
\end{example}

\def\cprime{$'$}

\end{document}